\numberwithin{equation}{section}
\newtheorem{theo}{\bf Theorem}[section]
\newtheorem{lem}[theo]{Lemma}
\newtheorem{proposition}[theo]{Proposition}
\newtheorem{lemma}[theo]{Lemma}
\newtheorem{theorem}[theo]{Theorem}
\newtheorem{definition}[theo]{Definition}
\theoremstyle{remark}
\newtheorem{rem}[theo]{Remark}
\newtheorem{remark}[theo]{Remark}
\newcommand{\R}{\mathbb{R}}
\newcommand{\eps}{\varepsilon}
\newcommand{\dz}{{\mathcal{D}}}
\newcommand{\dzo}{\mathcal{D}_0}
\newcommand{\dzun}{\mathcal{D}_{\mathrm{junction}}}
\newcommand{\dzd}{\mathcal{D}_{\mathrm{linear}}}
\newcommand{\dzt}{\mathcal{D}_{\mathrm{implicit}}}
\newcommand{\dzq}{\mathcal{D}_{\mathrm{straight}}}
\begin{document}

\title{A Hamilton-Jacobi approach to junction problems 
and application to traffic flows}

\author{C. Imbert\footnote{Universit\'e Paris-Dauphine, CEREMADE, UMR
  CNRS 7534, place de Lattre de Tassigny, 75775 Paris Cx 16 \& Ecole
  Normale Sup\'erieure, D\'epartement de Math\'ematiques et
  Applications, UMR 8553, 45 rue d'Ulm, F 75230 Paris cedex 5, France,
  \texttt{imbert@ceremade.dauphine.fr}}, 
R. Monneau\footnote{Universit\'e Paris-Est, Ecole des Ponts
  ParisTech, CERMICS, 6 et 8 avenue Blaise Pascal, Cit\'e Descartes
  Champs-sur-Marne, 77455 Marne-La-Vall\'ee Cedex 2, France
  \texttt{monneau@cermics.enpc.fr}}, H. Zidani\footnote{ENSTA
  ParisTech\& INRIA Saclay (Commands INRIA team), 32 boulevard Victor,
  75379 Paris Cedex 15, France, \texttt{Hasnaa.Zidani@ensta.fr}}}

\date{}

\maketitle

\begin{center}
\textit{Dedicated to J.-B. Hiriart-Urruty}
\end{center}

\begin{abstract}
  This paper is concerned with the study of a model case of first order
  Hamilton-Jacobi equations posed on a ``junction'', that is to say the
  union of a finite number of half-lines with a unique common point. The
  main result is a comparison principle. We also prove existence and
  stability of solutions. The two challenging difficulties
  are the singular geometry of the domain and the discontinuity of the
  Hamiltonian.  As far as discontinuous Hamiltonians are concerned, these
  results seem to be new.  They are applied to the study of some models
  arising in traffic flows.  The techniques developed in the present
  article provide new powerful tools for the analysis of such 
  problems.
\end{abstract}
\bigskip

\noindent \textbf{Keywords:} Hamilton-Jacobi equations, discontinuous
Hamiltonians, viscosity solutions, optimal control,
traffic problems, junctions \medskip

\noindent \textbf{MSC:} {35F21, 35D40, 35Q93, 35R05, 35B51}
\medskip

\section{Introduction}

In this paper we are interested in Hamilton-Jacobi (HJ) equations
posed on a one dimensional domain containing one single singularity.
This is a special case of a more general setting where HJ equations
are posed in domains that are unions of submanifolds whose dimensions
are different \cite{bh07}. An intermediate setting is the study of HJ
equations on networks, {see in particular
  \cite{acct}}. We will restrict ourselves to a very simple network:
the union of a finite numbers of half-lines of the plane with a single
common point.  Such a domain is referred to as a \emph{junction} and
the common point is called the \emph{junction
  point}. {We point out that getting a comparison
  principle is the most difficult part in such a study; it is obtained
  in \cite{acct} for similar special networks (bounded star-shaped
  ones).}  Our motivation comes from traffic flows. For this reason,
it is natural to impose different dynamics on each \emph{branch} of
the junction. Consequently, the resulting Hamiltonian is by nature
discontinuous at the junction point. Together with the singularity of
the domain, this is the major technical difficulty to overcome. The
analysis relies on the complete study of some \emph{minimal action}
(or \emph{metric}) related to the optimal control interpretation of
the equation \cite{siconolfi03,fathi97}. We prove in particular that
this minimal action is semi-concave by computing it.

We first present the problem and the main results in details. Then we
recall existing results and compare them with ours.

\subsection{Setting of the problem}
\label{ss13}

In this subsection, the analytical problem is introduced in
details. We first define the junction, then the space of functions on
the junction and finally the Hamilton-Jacobi equation.

\paragraph{The junction.} Let us consider $N\ge 1$ different unit
vectors $e_i\in \R^2$ for $i=1,...,N$. We define the branches
$$
J_i= [0,+\infty)\cdot e_i,\quad J_i^{*}=J_i\setminus
\left\{0\right\},\quad i=1,...,N
$$
and the {\it junction} (see Figure \ref{f6})
$$
J=\bigcup_{i=1,...,N} J_i.
$$
The origin $x=0$ is called the {\it junction point}.  It is also
useful to write $J^*=J \setminus \{0\}$.
\begin{figure}[ht]
\centering\resizebox{3cm}{!}{\input{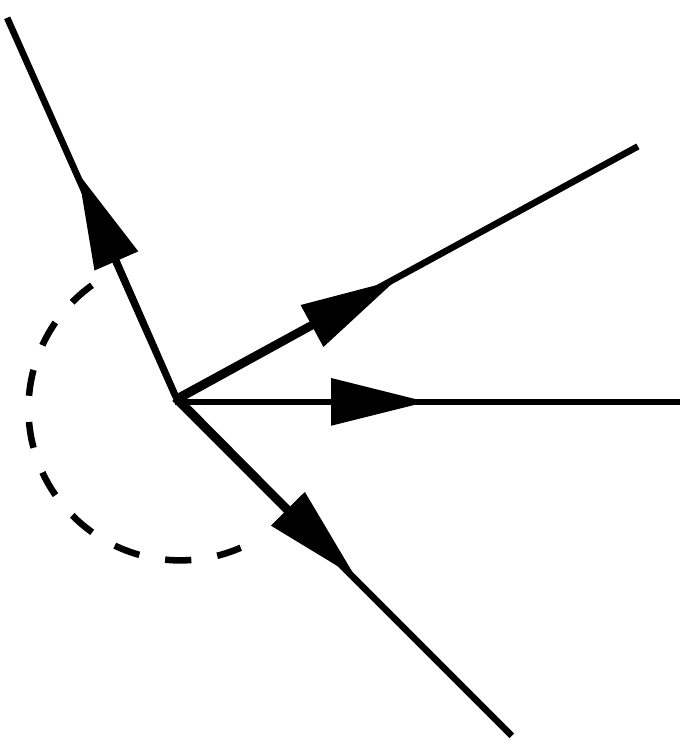_t}}
\caption{A junction \label{f6}}
\end{figure}
For a time $T>0$, we also define
$$
J_T = (0,T)\times J.
$$
The reader can remark that we chose to embed the junction in a
two-dimensional Euclidean space.  But we could also have considered an
abstract junction, or we could have embedded it for instance in a higher
dimensional Euclidean space. We made such a choice for the sake of
clarity. 

\paragraph{Space of functions.} For a function $u\colon J_T\to \R$, we
denote by $u^i$ the restriction of $u$ to $(0,T)\times J_i$.  Then we
define the natural space of functions on the junction
$$
C^1_*(J_T)=\left\{u\in C(J_T),\quad u^i \in
  C^1((0,T)\times J_i)\quad \mbox{for}\quad i=1,...,N\right\}.
$$
  In particular for
$u\in C^1_*(J_T)$ and $x=x_ie_i$ with $x_i\ge 0$, we define
$$
u_t(t,x)=\frac{\partial u^i}{\partial t}(t,x_ie_i) \quad \text{ and }
\quad u^i_x(t,x)= \frac{\partial u^i}{\partial x_i}(t,x_ie_i).
$$
Then we set
$$
\left\{\begin{array}{ll}
    u_x(t,x)= u^i_x(t,x) &\quad \mbox{if}\quad x\not=0,\\
    u_x(t,0) = (u^j_x(t,0))_{j=1,...,N} &\quad \mbox{if}\quad x=0.
\end{array}\right.
$$

\paragraph{HJ equation on the junction.} We are interested in
continuous functions $u\colon [0,T)\times J \to \R$ which are {\it
  viscosity solutions} (see Definition~\ref{defi:main}) on $J_T$ of
\begin{equation}
\label{eq:main}
u_t + H(x,u_x) = 0
\end{equation}
submitted to an initial condition
\begin{equation}\label{eq:ci}
u(0,x) = u_0 (x), \quad x \in J.
\end{equation}
Because of the optimal control problem we have in mind (see Subsection
\ref{ss13} below), we restrict ourselves to the simplest case of
discontinuous Hamiltonians; precisely, we consider
$$
  H(x,p) = \left\{\begin{array}{lll}
      H_i(p)& \quad \mbox{for}\quad p\in\R & \quad \mbox{if}\quad x\in J_i^*\\
       \max_{i=1,...,N} H^-_i(p_i) &\quad \mbox{for}\quad p=(p_1,...,p_N)\in\R^N & \quad \mbox{if}\quad x=0
\end{array}\right.
$$
where $H_i$ are convex functions whose Legendre-Fenchel transform is
denoted $L_i$. We recall that 
$$
H_i(p)=L_i^*(p)=\sup_{q\in\R}\left(pq -L_i(q)\right)
$$
and
\begin{equation}\label{eq::r14}
H_i=L_i^*.
\end{equation}
We also consider
\begin{equation}\label{eq::r13}
  H_i^-(p_i)=\sup_{q\le 0} \left(p_i q -L_i(q)\right)
\end{equation}
Therefore equation (\ref{eq:main}) can be rewritten as follows
\begin{equation}\label{eq::r12}
  \left\{\begin{array}{lll}
      u^i_t + H_i(u^i_x)=0 & \quad \mbox{on}\quad (0,T)\times J_i^* & \quad \mbox{for}\quad i=1,...,N,\\
       u_t +  \max_{i=1,...,N} H^-_i(u^i_x) = 0 & \quad \mbox{on}\quad (0,T)\times \left\{0\right\}. &
\end{array}\right.
\end{equation}
\paragraph{The optimal control framework.}
In this paragraph, we give an optimal control interpretation
\cite{lions82,barles94,bcd} of the Hamilton-Jacobi equation.  We define the
set of admissible controls at a point $x\in J$ by
$$
A(x)=\left\{\begin{array}{ll}
    \R e_{i_0} & \quad \mbox{if}\quad x\in J_{i_0}^*,\\
       \bigcup_{i=1,...,N} \R^+ e_i & \quad \mbox{if}\quad
    x=0.
\end{array}\right. 
$$
For $(s,y), (t,x)\in [0,T]\times J$ with $s\le t$ (the case $s=t$
being trivial and forcing $y=x$), we define the set of admissible
trajectories from $(s,y)$ to $(t,x)$ by
\begin{equation}\label{eq::r16}
{\mathcal A}(s,y;t,x)=\left\{
X\in W^{1,1}([s,t];\R^2)\colon  \left|\begin{array}{ll}
X(\tau)\in J&  \mbox{for all}\quad \tau \in (s,t),\\
\dot{X}(\tau) \in A(X(\tau)) & \mbox{for a.e.}\quad \tau \in (s,t)\\
X(s)=y  & \mbox{and}\quad X(t)=x 
\end{array}\right.
\right\}.
\end{equation}
For $P=pe_i\in A(x)$ with $p\in \R$, we define the Lagrangian on the
junction
\begin{equation}\label{eq::r15}
L(x,P)=\left\{\begin{array}{ll}
L_{i}(p) &\quad \mbox{if}\quad x\in J_i^*\\
L_0(p) &\quad \mbox{if}\quad x=0
\end{array}\right. 
\end{equation}
with
$$
 L_0(p)=\min_{j=1,...,N}L_j(p).
$$
The reader can be surprised by the definition of $L(x,P)$ for $x=0$.
In fact, if one considers only trajectories that do not stay at the
junction point, then the ones staying at $0$ are approximated by those
staying very close to $0$ on a branch $i \in I_0$ and moving
``slowly'' ($\dot{X} \simeq 0$).

\subsection{Main results}

We make the following assumptions: 

\paragraph{\bf (A0)} The initial data $u_0$ is Lipschitz continuous. 

\paragraph{\bf (A1)} There exists a constant $\gamma>0$, and for all
$i=1,...,N$, there exists $C^2(\R)$ functions $L_i$ satisfying
$L_i''\ge \gamma >0$, such that (\ref{eq::r14}) and (\ref{eq::r13})
hold.
\begin{theo}[Existence and uniqueness]\label{th::r2}  
  Assume (A0)-(A1) and let $T>0$. Then there exists a unique
  viscosity solution $u$ of (\ref{eq:main})-(\ref{eq:ci}) on $J_T$ in
  the sense of Definition~\ref{defi:main}, satisfying for some
  constant $C_T>0$
$$
|u(t,x)-u_0(x)|\le C_T \quad \mbox{for all}\quad (t,x)\in J_T.
$$
Moreover the function $u$ is Lipschitz continuous with respect to
$(t,x)$ on $J_T$.
\end{theo}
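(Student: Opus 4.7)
The plan is the classical three-step scheme for first-order Hamilton--Jacobi problems: uniqueness from the comparison principle, existence by Perron's method sandwiched between explicit barriers, and Lipschitz regularity via a comparison argument applied to time translates. Nothing below is new viscosity theory; the only real subtlety is checking that each of these classical moves is compatible with the junction condition $u_t+\max_i H_i^-(u^i_x)=0$ at $x=0$. The comparison principle itself is the main technical result of the paper, and I take it as granted here.

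\textbf{Uniqueness and barriers.} If $u,v$ are two solutions satisfying the $C_T$-growth bound, the comparison principle applied twice yields $u\equiv v$. To exhibit barriers, since $u_0$ is $L_0$-Lipschitz and each $H_i$ is continuous, the constant
$$
C_T = \max_i\Bigl(\sup_{|p|\le L_0}|H_i(p)| + L_i(0)\Bigr)
$$
is finite. Then $v^\pm(t,x):=u_0(x)\pm C_T t$ are respectively a super- and a subsolution of \eqref{eq::r12} on $J_T$: on each branch this uses $|H_i(u_0')|\le C_T$, and at $x=0$ it uses the bound $-L_i(0)\le H_i^-(p)\le |H_i(p)|$ (cf.\ \eqref{eq::r13}) together with the fact that each one-sided slope of $u_0$ at $0$ lies in $[-L_0,L_0]$.

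\textbf{Existence.} I would run Perron's method between these barriers: set
$$
u(t,x)=\sup\{w(t,x) : w \text{ viscosity subsolution of } \eqref{eq::r12},\ v^-\le w\le v^+\}.
$$
The classical Ishii--Perron argument shows that the upper semicontinuous envelope of $u$ is a subsolution and the lower semicontinuous envelope is a supersolution, provided these constructions are compatible with the junction condition. On the subsolution side, this amounts to the pointwise sup of subsolutions remaining a subsolution across the junction; on the supersolution side, the key fact is that $p\mapsto\max_i H_i^-(p_i)$ is lower semicontinuous, so relaxation through lower envelopes preserves the junction inequality. The comparison principle then forces $u$ to be continuous, and the sandwich $v^-\le u\le v^+$ gives the bound $|u-u_0|\le C_T\cdot T$. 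Alternatively, closer in spirit to the optimal control interpretation set up in the introduction, one can define $u$ as the value function $\inf\{u_0(X(0))+\int_0^t L(X(\tau),\dot X(\tau))\,d\tau\}$ over admissible trajectories \eqref{eq::r16} with Lagrangian \eqref{eq::r15}, and verify via the dynamic programming principle that $u$ satisfies \eqref{eq::r12}; the junction inequality at $x=0$ then reflects exactly the definition of $L_0=\min_j L_j$ and the one-sided character of $H_i^-$.

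\textbf{Lipschitz regularity.} For time-Lipschitzness, fix $h>0$ and note that $(t,x)\mapsto u(t+h,x)$ solves the same equation as $u$ on $(0,T-h)\times J$ with initial datum $u(h,\cdot)$ satisfying $\|u(h,\cdot)-u_0\|_\infty\le C_T h$. The comparison principle then gives $|u(t+h,x)-u(t,x)|\le C_T h$. For space-Lipschitzness it is enough to read the equation off each branch: $|H_i(u^i_x)|=|u^i_t|\le C_T$, and the coercivity of $H_i$ guaranteed by $L_i''\ge\gamma$ in (A1) converts this into a uniform bound on $|u^i_x|$; the branchwise bounds patch to a global Lipschitz bound thanks to the continuity of $u$ at the junction point.

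\textbf{Expected main obstacle.} The genuinely hard step is hidden in the appeal to the comparison principle: doubling of variables must be adapted both to the singular geometry of $J$ and to the discontinuity of $H$ at the origin, and this is where the semiconcavity of the minimal action announced in the introduction is needed. Everything else is routine, provided one takes some care to verify that Perron's sup-envelope actually satisfies the junction condition; that verification reduces to a small lemma about suprema and infima of the one-sided Hamiltonians $H_i^-$ from \eqref{eq::r13}.
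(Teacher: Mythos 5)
Your proposal is correct and follows essentially the same route as the paper: uniqueness from the comparison principle (Theorem~\ref{th::r1}), existence via Perron's method between the barriers $u_0\pm Ct$ (Theorem~\ref{thm:perron}) combined with the optimal-control representation, and Lipschitz regularity by comparing with time translates and then using the coercivity of $H_i$ on each branch, exactly as in Lemma~\ref{pro::r74}. The ``small lemma'' you flag about the junction condition surviving Perron's sup/bump construction is precisely the paper's Proposition~\ref{prop:equiv-def} on the equivalence with relaxed junction conditions.
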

On one hand, we will see below that the existence of a solution can be
obtained with Perron's method under weaker assumptions than (A1) (see
Theorem~\ref{thm:perron}). On the other hand, we are able to get
uniqueness of the solution only under assumption (A1) and this is a
consequence of the following result:
\begin{theo}[Comparison principle]\label{th::r1}
  Assume (A0)-(A1). Let $T>0$ and let $u$ (resp. $v$) be a sub-solution
  (resp. a super-solution) of (\ref{eq:main})-(\ref{eq:ci}) on $J_T$ in
  the sense of Definition~\ref{defi:main}.  We also assume that there
  exists a constant $C_T>0$ such that for all $(t,x)\in J_T$
$$
u(t,x) \le C_T (1+|x|)  \quad \left(\mbox{resp.}\quad v(t,x) \ge - C_T
  (1+|x|)\right).
$$
Then we have $u\le v$ on $J_T$.
\end{theo}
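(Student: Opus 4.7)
The plan is to establish the comparison principle by a doubling-of-variables argument adapted to the junction geometry, using a test function built from the optimal-control minimal action associated with the Lagrangians $L_i$. This is exactly where hypothesis (A1), the uniform convexity $L_i'' \ge \gamma$, becomes indispensable, and explains why uniqueness demands more than existence.

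First I would reduce to a compact setting. For $\eta > 0$, replace $u$ by $u - \eta \sqrt{1+|x|^2} - \eta/(T-t)$ and $v$ by the symmetric perturbation. The function $\sqrt{1+|x|^2}$ is smooth on each branch and continuous at the junction, so these corrections produce only controlled extra terms in the viscosity inequalities. Combined with the linear growth assumption on $u,v$, they ensure that the supremum of $u - v$ in doubled variables is attained on a compact subset of $J_T$; it then suffices to prove comparison for the perturbed pair and let $\eta \to 0$.

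Next I would double variables via
$$
\Phi_{\eps,\alpha}(t,s,x,y) = u(t,x) - v(s,y) - \frac{(t-s)^2}{2\alpha} - G_\eps(x,y),
$$
where $G_\eps$ is a penalty on $J \times J$. The naive squared geodesic distance is not smooth at the junction when $x$ and $y$ sit on different branches, so in view of the optimal-control interpretation it is natural to take $G_\eps(x,y) = \mathcal{D}(\eps;x,y)$, the infimum of $\int_0^\eps L(X,\dot X)\,d\tau$ over admissible trajectories from $y$ to $x$. Under (A1) this minimal action is explicitly computable, is semi-concave on $J \times J$, and reduces to the quadratic $(x_i-y_i)^2/(2\eps)$ when $x,y$ both lie in a single branch $J_i^*$. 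If a maximizer $(\bar t,\bar s,\bar x,\bar y)$ of $\Phi_{\eps,\alpha}$ has $\bar x,\bar y\in J^*$, we fall back on the classical one-dimensional comparison for the continuous convex Hamiltonian $H_i$ and the argument closes in the standard way.

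The main obstacle is the case when the maximizer concentrates at the junction point, say $\bar x = \bar y = 0$. The sub- and super-solution tests then yield
$$
a_\eps + \max_{i} H_i^-(p_i) \le 0 \quad \text{and} \quad b_\eps + \max_{i} H_i^-(q_i) \ge 0,
$$
with $a_\eps - b_\eps = (\bar t - \bar s)/\alpha \to 0$ and $p_i,q_i$ the branch-wise derivatives of $G_\eps$ at $(0,0)$. The one-sided Hamiltonian $H_i^-$, built from the constraint $q\le 0$, encodes exactly the admissible outgoing velocities $\dot X \in \R^+ e_i$ at the junction, so the optimal-control test function is structurally matched to the junction condition. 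The semi-concavity and explicit form of $\mathcal{D}$ should allow one to control $p_i - q_i$ branch by branch, and the monotonicity of $H_i^-$ on the relevant half-line then forces $a_\eps \le b_\eps + o_\eps(1)$. Letting $\eps\to 0$, then $\alpha\to 0$, then $\eta\to 0$ gives $u\le v$. The hardest part is not the doubling machinery itself but the construction and fine analysis of $\mathcal{D}$ in Step 2, and the branchwise comparison of gradients at the junction in Step 4; essentially, the whole comparison principle is reduced to a delicate metric estimate on the junction, which is where convexity in (A1) does all the work.
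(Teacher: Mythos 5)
Your overall strategy --- a direct doubling of variables between $u$ and $v$ with the minimal action as spatial penalty --- is precisely the route the paper states it was \emph{not} able to carry out. The paper instead splits the comparison through the value function $u_{\mathrm{oc}}$: Proposition~\ref{prop:superopt} shows that any super-solution satisfies a super-optimality principle, whence $v\ge u_{\mathrm{oc}}$, and Proposition~\ref{prop:subopt} compares the sub-solution $u$ with the Hopf--Lax representation of $u_{\mathrm{oc}}$ (a one-sided doubling in which the penalty is $\mathcal{D}(0,y;t,x)$ and the ``super-solution side'' is handled by the explicit infimum rather than by a viscosity inequality for $v$). This splitting is not cosmetic: it is what allows each of the two delicate inequalities \eqref{eq::r60} and \eqref{eq::r61} of Theorem~\ref{thm:minimal-action} to be used \emph{separately}, once against $u$ and once against $v$, instead of having to combine two viscosity inequalities extracted with two \emph{different} test functions (one touching $\mathcal{D}$ from above in $(t,x)$, the other in $(s,y)$) at the same four-variable maximum point.

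The concrete gap in your argument is Step~4. You claim that semi-concavity of $\mathcal{D}$ lets you ``control $p_i-q_i$ branch by branch'' at a maximizer with $\bar x=\bar y=0$ and then invoke monotonicity of $H_i^-$. This is not what happens: the branchwise forward and backward gradients of the reduced action at the junction are $\partial_x^i\dzo(0,0)=L_i'(\xi_i^+)$ and $-\partial_y^i\dzo(0,0)=L_i'(\xi_i^-)$, and these genuinely differ on every branch $i\notin I_0$ (i.e.\ whenever $L_i(0)>L_0(0)$, which the traffic application shows is the generic situation). No branch-by-branch gradient comparison is available. The cancellation that actually saves the day is the exact identity
$$
\max_{i\in I_N} H_i^-\bigl(\partial_x^i\dzo(0,0)\bigr)
=\max_{j\in I_N} H_j^-\bigl(-\partial_y^j\dzo(0,0)\bigr)
=-\bigl(\dzo-x\partial_x\dzo-y\partial_y\dzo\bigr)(0,0),
$$
proved in Lemmas~\ref{lem:dzo-bdry-1} and \ref{lem:dzo-bdry-2} via the explicit computation of $\dzo$ (the two maxima are in general attained at \emph{different} indices), and it fails in the symmetric form you need when $N=1$, where the backward relation only involves $H_1$ and not $H_1^-$. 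Without identifying and proving this identity --- which is the entire content of Section~\ref{sec:complete} --- your ``$a_\eps\le b_\eps+o_\eps(1)$'' does not follow, and even with it you would still need to justify matching the time derivatives $a_\eps,b_\eps$ coming from two distinct touching functions. So the proposal is not a complete proof; it defers the whole difficulty to a step described only as ``should allow one to control,'' and at that step the stated mechanism is incorrect.
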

In order to prove this strong uniqueness result, we will use in an
essential way the \emph{value function} associated to the optimal control
problem described in Subsection~\ref{ss13}: for $t\ge 0$, 
\begin{equation}\label{eq::r17}
  u_{\mathrm{oc}}(t,x)=\inf_{y\in J,\ X\in {\mathcal A}(0,y;t,x)}
  \left\{u_0(y)+ \int_0^t L(X(\tau),\dot{X}(\tau))d\tau\right\}
\end{equation}
where $L$ is defined in (\ref{eq::r15}) and ${\mathcal A}(0,y;t,x)$ is
defined in (\ref{eq::r16}). 
\begin{theo}[Optimal control representation of the solution]\label{th::r3}
  Assume (A0)-(A1) and let $T>0$. The unique solution given by Theorem
  \ref{th::r2} is $u=u_{\mathrm{oc}}$ with $u_{\mathrm{oc}}$ given in (\ref{eq::r17}).
  Moreover, we have the following Hopf-Lax representation formula
\begin{equation}\label{eq::r77}
u_{\mathrm{oc}}(t,x)=\inf_{y\in J}\left\{u_0(y)+ {\mathcal
    D}\left(0,y; t,x \right)\right\}
\end{equation}
with
$$
{\mathcal D}(0,y;t,x)=\inf_{X\in {\mathcal A}(0,y;t,x)}\left\{\int_0^t
  L(X(\tau),\dot{X}(\tau))d\tau\right\}.
$$
\end{theo}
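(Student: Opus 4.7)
The overall strategy is to establish that the value function $u_{\mathrm{oc}}$ is itself a viscosity solution of (\ref{eq:main})--(\ref{eq:ci}) in the sense of Definition~\ref{defi:main}, and matches the growth condition required by Theorem~\ref{th::r1}; the comparison principle will then force $u=u_{\mathrm{oc}}$, where $u$ is the unique solution provided by Theorem~\ref{th::r2}. The Hopf--Lax representation (\ref{eq::r77}) is a direct consequence of the definition (\ref{eq::r17}), obtained by exchanging the two infima:
$$
u_{\mathrm{oc}}(t,x)=\inf_{y\in J}\Big\{u_0(y)+\inf_{X\in\mathcal{A}(0,y;t,x)}\int_0^t L(X,\dot X)\,d\tau\Big\}=\inf_{y\in J}\{u_0(y)+\mathcal{D}(0,y;t,x)\}.
$$

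First I would gather basic properties of $u_{\mathrm{oc}}$. Using test trajectories that stay at the initial point on each branch gives an upper bound $u_{\mathrm{oc}}(t,x)\le u_0(x)+tL_i(0)$, and the Lipschitz continuity of $u_0$ combined with the coercivity of the $L_i$'s (immediate from (A1)) yields the matching lower bound; consequently $u_{\mathrm{oc}}(0,\cdot)=u_0$ and $u_{\mathrm{oc}}$ satisfies the linear growth required by Theorem~\ref{th::r1}. Next one proves the dynamic programming principle: for $0<h<t$,
$$
u_{\mathrm{oc}}(t,x)=\inf_{y\in J,\,X\in\mathcal{A}(t-h,y;t,x)}\Big\{u_{\mathrm{oc}}(t-h,y)+\int_{t-h}^t L(X,\dot X)\,d\tau\Big\},
$$
by the standard concatenation argument (the family $\mathcal{A}$ is stable under gluing at a common endpoint, even across the junction point).

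The core of the proof is to turn the DPP into the viscosity inequalities. Away from the junction, say at $(t_0,x_0)$ with $x_0\in J_i^*$, the classical derivation applies: specializing the DPP to trajectories with constant velocity $qe_i$ and testing against $\varphi\in C^1_*$ touching $u_{\mathrm{oc}}$ from above yields $\varphi_t(t_0,x_0)+q\varphi_x^i(t_0,x_0)-L_i(q)\le 0$ for every $q\in\R$, and taking the supremum gives $\varphi_t+H_i(\varphi_x^i)\le 0$; the reverse inequality is obtained by using near-optimal trajectories. At the junction point $x_0=0$, the sub-solution property is obtained by inserting into the DPP the straight trajectories $X(\tau)=(t_0-\tau)qe_i$, $q\ge 0$, which reach $0$ along a single branch with velocity $-qe_i$; this yields $\varphi_t(t_0,0)-q\varphi_x^i(t_0,0)\le L_i(-q)$ for all $q\ge 0$ and all $i$, and after the change of variable $r=-q\le 0$ taking the supremum over $r\le0$ followed by the maximum over $i$ produces exactly $\varphi_t+\max_i H_i^-(\varphi_x^i)\le 0$.

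The main obstacle will be the super-solution inequality at the junction. Here one must analyze near-optimal trajectories $X_\epsilon$ arriving at $(t_0,0)$: by the coercivity of the $L_i$'s from (A1) these have uniformly bounded velocities, so up to extraction either $X_\epsilon$ remains at $0$ on a time interval of length $\ge ch$ (which produces $\varphi_t(t_0,0)\le L_0(0)=\min_j L_j(0)$ via the very definition $L_0=\min_j L_j$, and $L_j(0)\ge H_j^-(0)$ closes this branch of the case analysis) or $X_\epsilon$ spends most of its time on a single branch $i^\ast$ approaching $0$ with approximately constant velocity $-q^\ast e_{i^\ast}$, $q^\ast\ge 0$, in which case the same computation as above performed with the near-optimal pair $(q^\ast,i^\ast)$ produces an index $i$ such that $\varphi_t(t_0,0)+H_i^-(\varphi_x^i(t_0,0))\ge 0$. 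In either case $\max_i H_i^-(\varphi_x^i)\ge-\varphi_t$ at $(t_0,0)$, which is the desired super-solution inequality. Once both viscosity inequalities are established on $J_T$, Theorem~\ref{th::r1} closes the argument and the formula (\ref{eq::r77}) follows from the rewriting above.
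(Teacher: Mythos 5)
Your route is genuinely different from the paper's. The paper never verifies directly, via the dynamic programming principle, that $u_{\mathrm{oc}}$ is a viscosity solution: it proves Theorem~\ref{th::r3} by sandwiching, i.e.\ by showing that every super-solution dominates $u_{\mathrm{oc}}$ (Proposition~\ref{prop:superopt}, through a super-optimality principle) and that every sub-solution is dominated by $u_{\mathrm{oc}}$ (Proposition~\ref{prop:subopt}, through a comparison argument in which the usual penalization $|x-y|^2/\eps$ is replaced by $\mathcal{D}(0,y;t,x)$ itself), both resting on the test functions for $\mathcal{D}$ constructed in Theorem~\ref{thm:minimal-action}; applying the two propositions to the solution of Theorem~\ref{th::r2} squeezes it onto $u_{\mathrm{oc}}$. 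Your plan --- prove the DPP, deduce that $u_{\mathrm{oc}}$ is itself a sub- and super-solution, then invoke the already established comparison principle --- is logically admissible (Theorem~\ref{th::r1} precedes Theorem~\ref{th::r3}, so using it here is not circular), and your treatment of the Hopf--Lax formula, of the bounds $|u_{\mathrm{oc}}(t,x)-u_0(x)|\le Ct$, and of the sub-solution inequality at the junction (straight incoming trajectories with velocity $r\le 0$ giving $\varphi_t+\max_i H_i^-(\varphi_x^i)\le 0$) is correct.

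The gap is in the super-solution inequality at the junction, which is precisely where the difficulty of the problem is concentrated. First, coercivity of the $L_i$ in (A1) only yields an $L^2$ bound on $\dot X_\eps$ over $[t_0-h,t_0]$, not a uniform pointwise bound, so a near-optimal trajectory may a priori oscillate between several branches, touching $0$ many times with large speed; your dichotomy (``either it waits at $0$ on a time set of measure $\ge ch$, or it travels on a single branch with approximately constant velocity $-q^\ast e_{i^\ast}$'') is therefore not exhaustive as stated, and ``uniformly bounded velocities'' is not a valid justification for it. To make it exhaustive you need the reduction to piecewise-linear trajectories: by Jensen's inequality every excursion into a branch that starts and ends at $0$ can be replaced by waiting at $0$ (this uses $L_0(0)=\min_j L_j(0)$), and the initial arc from $y_h$ to $0$ can be replaced by a straight line with velocity $q\le 0$ --- this is exactly Lemma~\ref{lem:optim-curv} of the paper, and it is the missing ingredient that turns your heuristic into a proof. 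Second, there are sign slips in the waiting case: touching from below, the correct conclusion is $\varphi_t(t_0,0)\ge L_0(0)$ (not $\le$), and the inequality that closes this case is $H_j^-(p)\ge -L_j(0)$ for every $p$ (take $q=0$ in the definition of $H_j^-$), whence $\max_j H_j^-(\varphi_x^j)\ge -L_0(0)\ge -\varphi_t$. With Lemma~\ref{lem:optim-curv} in hand and the signs corrected, your argument goes through and yields an alternative proof of this statement that bypasses Theorem~\ref{thm:minimal-action} here, though not in the comparison principle it relies on.
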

The comparison principle is obtained by combining 
\begin{itemize}
\item  a super-optimality principle for counterrevolutions $v$, which
  implies $v\ge u_{\mathrm{oc}}$;
\item a direct comparison result with sub solutions $u$, which gives
$u_{\mathrm{oc}}\ge u$.
\end{itemize}
We finally have the following result which shed light on the role of
the junction condition (see the second line of (\ref{eq::r12})).
\begin{theo}[Comparison with continuous solutions outside the junction
  point]\label{th::r4}
  Assume (A0)-(A1) and let $T>0$. Let $u\in C([0,T)\times J)$ be such
  that $u(0,\cdot)=u_0$ and for each $i\in \left\{1,...,N\right\}$,
  the restriction $u^i$ of $u$ to $(0,T)\times J_i$ is a classical
  viscosity solution of
$$
u^i_t + H_i(u^i_x)=0 \quad \mbox{on}\quad (0,T)\times J_i^*.
$$ 
Then $u$ is a sub-solution of (\ref{eq:main})-(\ref{eq:ci}) on $J_T$ in
the sense of Definition~\ref{defi:main}, and $u\le u_{\mathrm{oc}}$.
\end{theo}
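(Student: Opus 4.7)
The plan is twofold: first, show that $u$ satisfies the sub-solution condition from Definition~\ref{defi:main} (in particular the junction condition at $x=0$); second, combine this with the comparison principle (Theorem~\ref{th::r1}) and the identification of $u_{\mathrm{oc}}$ as the unique solution (Theorem~\ref{th::r3}) to conclude $u\le u_{\mathrm{oc}}$.

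The sub-solution property at any $(t_0,x_0)$ with $x_0\in J_i^*$ is exactly the hypothesis that $u^i$ is a classical viscosity sub-solution on the interior of branch $i$, since there the junction Hamiltonian coincides with $H_i$. The nontrivial task is to verify the junction sub-solution condition at $(t_0,0)$ for $t_0\in(0,T)$: letting $\phi\in C^1_*(J_T)$ touch $u$ from above at $(t_0,0)$, we must show
\[
\phi_t(t_0,0)+\max_{i=1,\ldots,N} H_i^-(\phi^i_x(t_0,0))\le 0.
\]
By the formula (\ref{eq::r13}), this reduces to the statement that for every $i\in\{1,\ldots,N\}$ and every $q\le 0$,
\[
\phi_t(t_0,0)+q\,\phi^i_x(t_0,0)-L_i(q)\le 0. \qquad (\star)
\]

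Fix such $i$ and $q$ and consider the constant-velocity trajectory $X(\tau)=q(\tau-t_0)\,e_i$ for $\tau\in[t_0-\delta,t_0]$: since $q\le 0$, we have $X(\tau)\in J_i^*$ for $\tau<t_0$ while $X(t_0)=0$, and $\dot X\equiv q\,e_i$. As $u^i$ is a classical viscosity sub-solution of $u^i_t+H_i(u^i_x)=0$ on the open set $(0,T)\times J_i^*$, the standard equivalence between viscosity sub-solutions and the dynamic programming sub-solution inequality (see e.g.\ \cite{barles94,bcd}) yields, for $t_0-\delta\le s<t<t_0$,
\[
u(t,X(t))-u(s,X(s))\le \int_s^t L_i(q)\,d\tau=(t-s)L_i(q).
\]
Letting $t\nearrow t_0$ and using continuity of $u$ on $[0,T)\times J$ gives $u(t_0,0)-u(s,X(s))\le (t_0-s)L_i(q)$. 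Since $u(t_0,0)=\phi(t_0,0)$ and $u(s,X(s))\le \phi(s,X(s))$ for $s$ close enough to $t_0$, we upgrade this to $\phi(t_0,0)-\phi(s,X(s))\le (t_0-s)L_i(q)$; dividing by $t_0-s>0$ and letting $s\nearrow t_0$ produces $(\star)$ by applying the chain rule to the $C^1$ function $\tau\mapsto\phi^i(\tau,X(\tau))$.

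Once $u$ is established as a sub-solution of (\ref{eq:main})-(\ref{eq:ci}), the linear growth bound $u(t,x)\le C_T(1+|x|)$ follows from Lipschitz continuity of $u_0$ together with classical Lipschitz estimates for viscosity solutions on each branch; Theorem~\ref{th::r1} then gives $u\le u_{\mathrm{oc}}$, using $u_{\mathrm{oc}}$ as super-solution via Theorem~\ref{th::r3}. The main obstacle is the junction step; the reason it can be carried out with purely interior information is that $H_i^-$ encodes only controls $q\le 0$, whose associated trajectories remain inside the open branch $J_i^*$ right up until the junction point, so that no information about what happens \emph{at} $\{0\}$ is ever invoked.
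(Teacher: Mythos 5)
Your proof is correct in substance, but it takes a genuinely different route from the paper's for the key step (the junction sub-solution inequality). The paper argues on the Hamiltonian side: it perturbs the test function by the penalization $-\eta_j/|x_j|$, which forces the maximum of $u-\phi$ off the junction point into some branch $J_{i_0}^*$, applies the branch inequality there with the perturbed gradient $\phi_x-\eta_{i_0}/|x^\eta|^2$, and lets $\eta_{i_0}\to 0$; since the gradient perturbation is nonpositive, the limit naturally produces $H_{i_0}^-(\phi_x^{i_0})=\inf_{q\le 0}H_{i_0}(\phi_x^{i_0}+q)$. You argue on the dual, Lagrangian side: you test the sub-optimality principle along straight trajectories with velocity $q\le 0$ that approach $0$ from inside $J_i^*$, and recover $H_i^-$ as $\sup_{q\le 0}(pq-L_i(q))$. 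Both proofs exploit the same mechanism (only one-sided information, from inside the branch, is needed), and yours has the merit of making the control-theoretic meaning of $H_i^-$ transparent; the paper's penalization argument is purely PDE-based and does not invoke the equivalence between viscosity sub-solutions and the dynamic-programming inequality. Two small points you should tighten: (i) for $q=0$ your trajectory is constantly equal to $0$ and never enters $J_i^*$, so $(\star)$ for $q=0$ must be obtained by letting $q\to 0^-$ in $(\star)$ for $q<0$ (legitimate since both sides are continuous in $q$); (ii) the sub-optimality inequality $u(t,X(t))-u(s,X(s))\le (t-s)L_i(q)$ for a merely continuous viscosity sub-solution should be justified by first noting that, $H_i$ being coercive under (A1), $u^i$ is locally Lipschitz on $(0,T)\times J_i^*$, after which the a.e.\ chain rule and the Fenchel inequality $u_x\,q - H_i(u_x)\le L_i(q)$ give the claim. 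The final step ($u\le u_{\mathrm{oc}}$ via the comparison principle, after checking the linear growth bound $u(t,x)\le u_0(x)+Ct$ on each branch) matches the paper, which is equally terse on this point.
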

An immediate consequence of Theorem~\ref{th::r4} is the fact that the
optimal control solution $u_{\mathrm{oc}}$ is the maximal continuous function
which is a viscosity solution on each open branch.

We apply in Section~\ref{s5} our HJ approach to describe traffic flows
on a junction. In particular, we recover the well-known junction
conditions of Lebacque (see \cite{lebacque}) or equivalently those for the
Riemann solver at the junction as in the book of Garavello and Piccoli
\cite{gp}; see also \cite{gp09}.

\subsection{Comments}

We already mentioned that the main difficulties we have to overcome in
order to get our main results are on one hand the singular geometry of the
domain and on the other hand the discontinuity of the Hamiltonian. 

\medskip

\paragraph{\bf Discontinuity.}  \ 
{ Several papers in the literature deal with HJB equations
  with discontinuous coefficients; see for instance
  \cite{barles93,soravia93,ns95,soravia02,stromberg03,bd05,soravia06,cr07,ch08}.
  Note that in these works the optimal trajectories do cross the
  regions of discontinuities (i.e. the junction in the present paper)
  only on a set of time of measure zero. In the present paper, the
  optimal trajectories can remain on the junction during some time
  intervals, and the results cited above do not apply then to the
  problem studied here.

On the other hand, the analysis of scalar conservation laws with
discontinuous flux functions has been extensively studied, we refer to
\cite{sv,bv06,bk08} and references therein. We also point out that a
uniqueness result is proved in $\R$ in the framework corresponding a
junction with two branches \cite{gnpt07}. To the best of our knowledge,
in the case of junctions with more than two branches, there are no
uniqueness result. Moreover, the link between HJB
equations and conservation laws with discontinuous has been seldom
investigated \cite{Ostrov}.}

{The main differences between the study in \cite{acct}
  and the one carried out in the present paper lie in the fact that in
  \cite{acct} the Lagrangian can depend on $x$ and is continuous with
  respect to this variable, while we consider a Lagrangian which is
  constant in $x$ on each branch but can be discontinuous (with
  respect to $x$) at the junction. We point out that we cannot extend
  directly our approach to Lagrangians depending on $x$ since we use
  extensively the representation formula ``\`a la Hopf-Lax''. In order
  to generalise results in this direction, the semi-concavity of the
  ``fundamental solution'' $\mathcal{D}$ should be proved without
  relying on explicit computations. This question is very interesting
  but is out of the scope of the present paper.}
\medskip

\paragraph{\bf Networks.}  \ 
{ It is by now well known that the study of traffic flows
  on networks is an important motivation that give rise to several
  difficulties related to scalar conservation laws with discontinuous
  coefficients.  This topic has been widely studied by many authors,
  see for instance \cite{bk08, gp,ekfn} and the references therein.

However, the study of HJB equations on networks has been considered
very recently; the reader is referred to \cite{schieborn,cs} where
Eikonal equations are considered.  A more general framework was also
studied in \cite{acct,acct-2} where a definition of viscosity
solutions on networks, similar to Definition~\ref{defi:main}, has been
introduced.}

{It would be interesting to extend the results of the
  present paper to more general networks but the obstacle is the same
  than the one to be overcome if one wants to deal with Lagrangians
  depending on $x$: for a general network, the complete study of the
  fundamental solution is probably out of reach. This is the reason
  why we only consider the very specific case of a junction in order
  to be able to overcome the difficulty of the discontinuity of the
  Lagrangian.}

\medskip
\paragraph{\bf The optimal control interpretation.} 
\ { As explained above, the comparison principle is proved
  by using in an essential way the optimal control interpretation of
  the Hamilton-Jacobi equation. The use of representation formulas
  and/or optimality principles is classical in the study of
  Hamilton-Jacobi equations
  \cite{ls85,soravia99a,soravia99b,gs04,gs06}.  More specifically, it
  is also known that a ``metric'' interpretation of the
  Hamilton-Jacobi equation is fruitful \cite{siconolfi03}. Such an
  interpretation plays a central role in the weak KAM theory
  \cite{fathi97}.

As far as our problem is concerned, we are not able to prove
uniqueness of viscosity solutions by using the classical techniques of
doubling variable.  The idea used here is based on the equivalence
between the viscosity super-solution and the super-optimality
principle (also known as weak-invariance principle), and by using
representation formulas for the viscosity sub solutions.  This
representation seems to be new for HJB equations with discontinuous
coefficients, see for instance \cite{DalMFra}.  }

We would like next to be a bit more precise. The technical core of the
paper lies in Theorem~\ref{thm:minimal-action}. This result implies that
the function 
$$
\mathcal{D}(s,y;t,x)=(t-s) \mathcal{D}_0 \left(\frac{y}{t-s},\frac{x}{t-s}\right)
$$ 
is semi-concave with respect to $(t,x)$ and
$(s,y)$ and, if there are at least two branches ($N\ge 2$), that
$\mathcal{D}$ satisfies
\begin{equation*}\label{eq::r150}
\left\{\begin{array}{rl}
\dz_t + H(x, \dz_x)&=0,\\
-\dz_s+ H(y, -\dz_y)&=0
\end{array}\right.
\end{equation*}
(in a weak sense made precise in the statement of
Theorem~\ref{thm:minimal-action}).  In the case where the Lagrangians
coincide at the junction point ($L_1(0)=...=L_N(0)$), it turns out
that the restriction $\dzo^{ji}(y,x)$ of $\dzo$ to $J_j\times J_i$
belong to $C^1(J_j\times J_i)$ and is convex. A more general case is
considered in this paper: Lagrangians can differ at the junction point
and in this case, the functions $\dzo^{ji}$ are not convex nor $C^1$
anymore for some $(i,j)$.  { Let us point out here that the
  assumptions on the Hamiltonian $H_i$, and in particular the fact
  that it does not depend on the space variable $x$, plays a crucial
  role to establish the properties satisfied by the minimal action
  function ${\cal D}$.}

\paragraph{\bf Generalization and open problems.}
Eventually, we briefly mention natural generalizations of our results
and some important open problems. First of all, it { would be natural
  to extend the results of this paper to more general setting where
  the Hamiltonians $H_i$ depend on the space variable $x$.  Moreover,
  it would be interesting to consider general networks with several
  junction points.  Dealing with non-convex and non-coercive
  Hamiltonians is quite challenging and would require first to have a
  direct proof of the comparison principle which does not need to go
  through the interpretation of the viscosity solution as the value
  function of an optimal control problem.}

\subsection*{Organization of the article}

Section~\ref{s5} is devoted to the application of our results to some
traffic flow problems. In particular, the HJ equation is derived and the
junction condition is interpreted.  In Section~\ref{sec:visc}, the
definition of (viscosity) solutions is made precise.  In
Section~\ref{sec:oc}, the first important properties of optimal
trajectories are given. Section~\ref{sec:op} is devoted to the proof of the
main results of the paper. In particular, the comparison principle is
proved by proving a super-optimality principle and by comparing
sub solutions with the solution given by the optimal control interpretation
of the equation. Section~\ref{sec:complete} is devoted to the proof of the
technical core of the paper, namely the existence of test functions for the
minimal action associated with the optimal control interpretation.

\subsection*{Notation} 

\paragraph{Distance and coordinates in the junction.}
We denote by $d$ the geodesic distance defined on $J$ by
$$
d(x,y)=\left\{\begin{array}{ll}
|x-y| &\quad \mbox{if}\quad x,y\quad \mbox{belong to the same branch $J_i$ for some $i$},\\
|x|+|y| &\quad \mbox{if}\quad x,y\quad \mbox{do not belong to the same branch}.
\end{array}\right.
$$
For $x \in J$, $B(x,r)$ denotes the (open) ball centered at $x$ of
radius $r$. We also consider balls $B((t,x),r)$ centered at $(t,x) \in
(0,+\infty) \times J$ of radius $r>0$.  For $x\in J$, let us define
the index $i(x)$ of the branch where $x$ lies. Precisely we set:
$$
i(x)=\left\{\begin{array}{ll}
i_0 & \quad \mbox{if}\quad x\in J_{i_0}^*,\\
0 & \quad \mbox{if}\quad x=0.
\end{array}\right.
$$
Up to reordering the indices, we assume that there exists an index
$k_0\in \left\{1,...,N\right\}$ such that
\begin{equation}\label{eq::r26}
L_0(0)=L_1 (0) = \dots = L_{k_0} (0) < L_{k_0+1} (0) \le \dots \le L_N (0).
\end{equation}
We also set
$$
  I_0= \left\{1,..,k_0 \right\} \quad \mbox{and}\quad I_N=\left\{ 1,...,N \right\}.
$$

\paragraph{Functions defined in $\mathbf{J^2}$.} For a function $\varphi$ defined on $J\times J$,
we call $\varphi^{ij}$ its restriction to $J_i\times J_j$.  Then we
define the space
$$
C^1_*(J^2)=\left\{\varphi\in C(J^2),\quad \varphi^{ij}\in
  C^1(J_i\times J_j)\quad \mbox{for all}\quad i,j\in I_N\right\}.
$$
We also call for $x=x_ie_i$ with $x_i\ge 0$ and $y=y_je_j$ with
$y_j\ge 0$
$$
\partial_x^i\varphi(x,y)=\frac{\partial}{\partial x_i}\varphi^{ij}(x_ie_i,y)\quad \mbox{and}\quad 
\partial_y^j\varphi(x,y)=\frac{\partial}{\partial
  y_j}\varphi^{ij}(x,y_je_j)
$$ 
and
$$
\partial_x \varphi(x,y)=\left\{\begin{array}{ll}
\partial_x^i\varphi(x,y)&\quad \mbox{if}\quad x\in J_i^*,\\
\left(\partial_x^i\varphi(x,y)\right)_{i=1,...,N} &\quad \mbox{if}\quad x=0
\end{array}\right.
$$
and similarly
$$
\partial_y \varphi(x,y)=\left\{\begin{array}{ll}
\partial_y^j\varphi(x,y)&\quad \mbox{if}\quad y\in J_j^*,\\
\left(\partial_y^j\varphi(x,y)\right)_{j=1,...,N} &\quad \mbox{if}\quad y=0.
\end{array}\right.
$$
We also set
$$
\left\{\begin{array}{ll}
x\partial_x \varphi(x,y)= x_i \partial_x^i\varphi (x,y) &\quad \mbox{if}\quad x\in J_i,\\
y\partial_y \varphi(x,y)= y_j \partial_y^j\varphi (x,y) &\quad \mbox{if}\quad y\in J_j.
\end{array}\right.
$$

\section{Application to the modeling of traffic flows}\label{s5}

In this section we present the derivation of the Hamilton-Jacobi formulation of traffic on a junction.
We also discuss the meaning of our junction condition in this framework and relate it to known results.

\subsection{Primitive of the { densities} of cars}\label{s51}

We consider a junction (represented on Figure \ref{f5})
 \begin{figure}[h]
 \centering \resizebox{4cm}{!}{\input{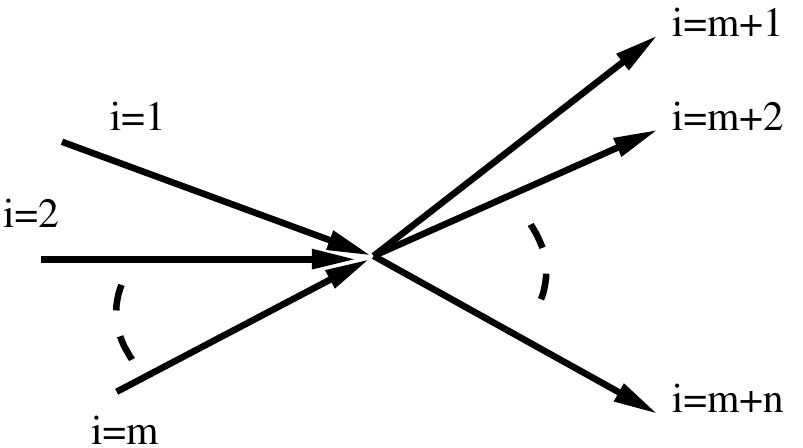_t}}
 \caption{A traffic junction \label{f5}}
 \end{figure}
 with $m\ge 1$ incoming roads (labeled by the index $i=1, ...,m$) and $n\ge
 1$ outgoing roads (labeled by $j=m+1,...,m+n=N$). This means that the cars
 move on the incoming roads in the direction of the junction and then have
 to choose to go on one of the $n$ outgoing roads.  We assume that the
 proportion of cars coming from the branch $i=1,...,m$ is a fixed number
 $\gamma_i>0$ (which may be not realistic for $m\ge 2$), and that the
 proportion of cars going on each branch $j\in \left\{m+1,...,m+n\right\}$
 is also a fixed number $\gamma_j>0$.  We also assume the obvious relations
 (for conservation of cars)
$$\sum_{i=1,...,m} \gamma_i =1\quad \mbox{and}\quad \sum_{j=m+1,...,m+n} \gamma_j =1$$
We denote by $\rho^k(t,X)\ge 0$ the car density at time $t$ and at the
position $X$ on the branch $k$.  In particular, we assume that the traffic
is described on each branch $k$ by a flux function $f^k\colon\R\to \R$. We
assume
\begin{equation}\label{eq::r0}
\mbox{each function $f^k$ is concave and has a unique maximum value at $\rho=\rho^k_c>0$}
\end{equation}
The typical example of such flux function is given by the LWR model (Lighthill, Whitham \cite{lw} 
and Richards \cite{richards}),
with 
\begin{equation}\label{eq::r155}
  f(\rho)=\rho v(\rho)\quad \mbox{with the velocity}\quad v(\rho)=V_{\mathrm{max}}(1-\rho/\rho_{\mathrm{max}})
\end{equation}
where $V_{\mathrm{max}}$ and $\rho_{\mathrm{max}}$ are respectively
the maximal velocity and the maximal car density in the model). In this
model the critical car density $\rho_c$ where $f$ is maximal, is equal to
$\frac12 \rho_{\mathrm{max}}$.

We assume that the car densities are solution of non linear transport equations:
\begin{equation}\label{eq::r1}
\left\{\begin{array}{ll}
\rho^i_t + (f^i(\rho^i))_X =0, \quad X<0,& \quad \mbox{for}\quad i=1,...,m\\
\rho^j_t + (f^j(\rho^j))_X =0, \quad X>0,& \quad \mbox{for}\quad j=m+1,...,m+n
\end{array}\right.
\end{equation}
where we assume that the junction point is located at the origin $X=0$.

We do not precise yet the junction condition at $X=0$, and we now proceed
formally to deduce the Hamilton-Jacobi model of such a junction.  For a
function $g$ to determine, let us consider the functions

\begin{equation}\label{eq::r2}
\left\{\begin{array}{lll}
 U^i(t,X)=g(t) + \frac{1}{\gamma^i}\int_0^X \rho^i(t,Y)\ dY,&\quad X<0,&\quad \mbox{for}\quad i=1,...,m,\\
 U^j(t,X)=g(t) + \frac{1}{\gamma^j}\int_0^X  \rho^j(t,Y)\ dY,&\quad X>0,& \quad \mbox{for}\quad j=m+1,...,m+n.
\end{array}\right.
\end{equation}
Then we can compute formally for $j=m+1,...,m+n$
$$
\begin{array}{ll}
U^j_t & = g'(t) + \frac{1}{\gamma^j}\int_0^X  \rho^j_t(t,Y)\ dY\\
\\
& = g'(t) - \frac{1}{\gamma^j}\int_0^X  (f^j(\rho^j(t,Y)))_X\ dY\\
\\
& = - \frac{1}{\gamma^j}f^j(\rho^j(t,X)) + g'(t) + \frac{1}{\gamma^j}f^j(\rho^j(t,0^+))
\end{array}
$$
This shows that for $j=m+1,...,m+n$
\begin{equation}\label{eq::r3}
U^j_t + \frac{1}{\gamma^j}f^j(\gamma_jU^j_X)=h^j(t)
\end{equation}
where
$$
h^j(t):=g'(t) + \frac{1}{\gamma^j}f^j(\rho^j(t,0^+)).
$$
Remark that we can show similarly that (\ref{eq::r3}) is still true for the
index $j$ replaced by $i=1,...,m$ with
$$
h^i(t)=g'(t) + \frac{1}{\gamma^i}f^i(\rho^i(t,0^-)).
$$
In particular, this shows (at least when the quantities in (\ref{eq::r3})
are well defined) that we can choose $g$ such that the total flux $-g'(t)$
is given by
\begin{equation}\label{eq::r4}
-g'(t) =\sum_{i=1,...,m}f^i(\rho^i(t,0^-))
\end{equation}
and then we have
$$
h^k(t)\equiv 0 \quad \mbox{for}\quad i=1,...,m+n
$$
if and only if
\begin{equation}\label{eq::r11}
\left\{\begin{array}{l}
f^i(\rho^i(t,0-))=\gamma^i (-g'(t)) \quad \mbox{for}\quad i=1,...,m\\
f^j(\rho^j(t,0^+))=\gamma^j (-g'(t)) \quad \mbox{for}\quad j=m+1,...,m+n
\end{array}\right.
\end{equation}
which is exactly the expected condition which says that the proportion of
incoming cars going in the junction from the branch $i$ is $\gamma^i$ and
the proportion of cars getting out of the junction which choose to go on
the branch $j$ is $\gamma^j$.

Let us notice that if we choose the initial condition $g(0)=0$, then we
deduce from (\ref{eq::r4}) that we have with $l=1,...,m+n$
$$-U^l(t,0)=-g(t)=\int_0^t \sum_{i=1,...,m}f^i(\rho^i(\tau,0^-))\ d\tau$$
which shows that \emph{$-U^l(t,0)$ can be interpreted as the total quantity
  of cars passing through the junction point $X=0$ during the time interval
  $[0,t)$}. As a consequence,
the quantity $-U^l_t(t,0)$ can also be interpreted as the instantaneous
flux of cars passing through the junction point. \medskip

We now give a further interpretation of the problem in the special case
$m=1$.  In the special case $m=1$, imagine for a moment, that we come back
to a discrete description of the traffic, where each car of label $k$ has a
 position $x_k(t)$ with the ordering $x_k(t)<x_{k+1}(t)<0$. We can
be interested in the label $k$ of the car $x_k(t)\le 0$ which is the
closest to the junction point $X=0$. Let us call it $K(t)$. We can
normalize the initial data such that $K(0)=0$.  Then the quantity of cars
that have passed through the junction point $X=0$ during the time interval
$[0,t)$ is equal to $-K(t)$, which is the exact discrete analogue of the
continuous quantity $-U^1(t,0)$.

On the other hand the number of cars between the positions $a=x_{A}(t)$ and
$b=x_{B}(t)$ is obviously equal to $B-A$, and its continuous analogue on
the branch $i=m=1$ with $\gamma^1=1$, is $\int_a^b \rho^1(t,X)\ dX =
U^1(t,b)-U^1(t,a)$.  This shows that $U^1(t,X)$ can be interpreted as the
exact continuous analogue of the discrete labeling of the cars moving in
the traffic.

This interpretation is also meaningful on the ``exit'' branches,
i.e. for $j\in \left\{m+1,...,m+n\right\}$. Indeed, for such $j$'s,
$U^j(t,X)$ is the continuous analogue of the discrete label of the car
that have decided to choose the branch $j$ and which is at time $t$
close to the position $X>0$.

\subsection{Getting the Hamilton-Jacobi equations}

We now set
\begin{equation}\label{eq::r6}
\left\{\begin{array}{lll}
u^i(t,X)=-U^i(t,-X),&\quad X>0,&\quad \mbox{for}\quad i=1,...,m\\
u^j(t,X)=-U^j(t,X),&\quad X>0,&\quad \mbox{for}\quad j=m+1,...,m+n
\end{array}\right.
\end{equation}
and we define the convex Hamiltonians
\begin{equation}\label{eq::r7}
  \left\{\begin{array}{lll}
      H_i(p)=-\frac{1}{\gamma^i}f^i(\gamma^i p)&\quad \mbox{for}\quad i=1,...,m\\
      H_j(p)=-\frac{1}{\gamma^j}f^j(-\gamma^j p)&\quad \mbox{for}\quad j=m+1,...,m+n
\end{array}\right.
\end{equation}
Then we deduce from (\ref{eq::r3}) that we have
\begin{equation}\label{eq::r8}
u^k_t + H_k(u^k_X)=0,\quad X>0,\quad \mbox{for}\quad k=1,...,m+n
\end{equation}
with equality of the functions at the origin, i.e.
$$
u^k(t,0)=u(t,0) \quad \mbox{for any}\quad k\in \left\{1,...,m+n\right\}.
$$
Notice that for the choice $V_{\mathrm{max}}=1=\rho_{\mathrm{max}}$
in (\ref{eq::r155}), we get with $f^k(\rho)=f(\rho)=\rho(1-\rho)$ for all
$k\in I_N$, that
$$
\left\{\begin{array}{ll}
 L_{\textup{ref}}(q)=\frac{1}{4}(1+q)^2&\\
 L_i(q)=\frac{1}{\gamma^i}L_{\textup{ref}}(q) &\quad \mbox{for}\quad i=1,...,m\\
 L_j(q)=\frac{1}{\gamma^j}L_{\textup{ref}}(-q) &\quad \mbox{for}\quad j=m+1,...,m+n
\end{array}\right.
$$
In particular this shows that the $L_k(0)$ are not all the same, even in
the simplest case.

\subsection{The junction condition and its interpretation}

A junction condition is still needed so that the solution of
(\ref{eq::r8}) be uniquely defined.  Indeed, at first glance, one may
think that $u_t(t,0)$ is equal to $-g'(t)$ which is given by
(\ref{eq::r4}) (where we have assumed (\ref{eq::r11})).  The point is
that this condition can not be satisfied for every time.  One way to
be convinced oneself of that fact is to consider the case $m=n=1$ with
$f^1=f^2=f$. Then, we look at solutions $u$ of the Hamilton-Jacobi
equation in $\R$ with the artificial junction. We can simply associate
with it the classical conservation law on the whole real line. We can
then consider a single shock moving with constant velocity for the
conservation law.  When this shock will pass through the junction
point (let us say at time $t_0$), this will mean that $u_t(t_0,\cdot)$
is discontinuous in space at the junction point. In particular the
formal computations of Subsection~\ref{s51} are no longer valid at
that time $t_0$, even if they are valid for $t\neq t_0$.  For a
general problem, one may expect that our formal computations are only
valid for almost every time (even if it is not clear for us).

In view of Theorem \ref{th::r4}, if we restrict our attention to continuous
solutions $u$, then we will have $u\le u_{\mathrm{oc}}$ where $u_{\mathrm{oc}}$ is the
solution associated to the optimal control problem.  This shows in
particular that we have
$$
u(t,0)\le u_{\mathrm{oc}}(t,0)
$$
which means (in view of (\ref{eq::r6}) and the interpretation of $-U^l$
given in subsection \ref{s51}) that we have a universal bound on the total
amount of cars passing through the junction point during the time interval
$[0,t)$. If we assume moreover that this amount of cars is maximal, then we
can choose (and indeed have to choose) $u=u_{\mathrm{oc}}$ and the natural junction
condition is then
\begin{equation}\label{eq::r9}
u_t(t,0) + \max_{k=1,...,N} H_k^-(u^k_X(t,0^+)) =0
\end{equation}
with
$$
H_k^-(p)=\sup_{q\le 0} \left(pq - L_k(q)\right)\quad \mbox{and}\quad
L_k(p)=\sup_{q\in\R} \left(pq -H_k(q)\right).
$$
Using our assumption (\ref{eq::r0}) on the functions $f^k$, let us define for $k=1,...,N$
the Demand functions
$$
f^k_D(p)=\left\{\begin{array}{ll}
f^k(p) &\quad \mbox{for}\quad p \le \rho^k_c\\
f^k(\rho^k_c) &\quad \mbox{for}\quad p \ge \rho^k_c
\end{array}\right.
$$
and the Supply functions
$$
f^k_S(p)=\left\{\begin{array}{ll}
f^k(\rho^k_c)&\quad \mbox{for}\quad p \le \rho^k_c\\
f^k(p)  &\quad \mbox{for}\quad p \ge \rho^k_c.
\end{array}\right.
$$
From assumption (\ref{eq::r0}) on the functions $f^k$, 
we deduce that
$$
\left\{\begin{array}{ll}
    H_i^-(p)= -\frac{1}{\gamma^i}f^i_D(\gamma^i p),& \quad \mbox{for}\quad i=1,...,m\\
    H_j^-(p)=  -\frac{1}{\gamma^j}f^j_S(-\gamma^j p),& \quad
    \mbox{for}\quad j=m+1,...,m+n=N.
\end{array}\right.
$$
Condition (\ref{eq::r9}) means that
\begin{multline}\label{eq::r10}
  -U^1_t(t,0)  =u_t(t,0)  = \min_{k=1,...,N} -H_k^-(u^k_X(t,0^+))\\
  =\min\bigg(\min_{i=1,...,m}\frac{1}{\gamma^i}f^i_D(\rho^i(t,0^-)),
    \min_{j=m+1,...,m+n}\frac{1}{\gamma^j}f^j_S(\rho^j(t,0^+))\bigg).
\end{multline}
Notice that from (\ref{eq::r11}), it is natural to compare 
$$
\frac{1}{\gamma^i}f^i(\rho^i(t,0^-)) \quad \text{and} \quad
\frac{1}{\gamma^j}f^j(\rho^j(t,0^+)).
$$
  Then condition (\ref{eq::r10}) is
nothing else that the Demand and Supply condition of Lebacque, which claims
that the passing flux is equal to the minimum between the Demand and the
Supply, as it is defined in \cite{lk} (at least in the case $m=1$).

In the special case $m=1$, it is explained in \cite{lkbis} that this
condition (\ref{eq::r10}) is also equivalent to the condition defining the
Riemann solver at the junction point in the book of Garavello and Piccoli
\cite{gp}. Let us notice that this condition is also related to the Bardos,
Le Roux, N\'edelec \cite{bln} boundary condition.

\section{Viscosity solutions}
\label{sec:visc}

In this section, we consider a weaker assumption than (A1).  We
introduce the following assumption:

\paragraph{(A1')} For each $i\in I_N$, 
\begin{itemize}
\item the function $H_i\colon\R\to \R$ is continuous and 
$\lim_{|p|\to +\infty} H_i(p) = +\infty;$
\item there exists $p^i_0 \in \R$ such that $H_i$ is non-increasing on
  $(-\infty,p^i_0]$ and non-decreasing on $[p^i_0,+\infty)$;
\end{itemize}
When (A1') holds true, the function $H_i^-$ is defined by
$H_i^-(p)=\inf_{q\le 0} H_i(p+q)$. We now make the following useful
remark whose proof is left to the reader.
\begin{lem}\label{lem::r18}
Assumption (A1) implies Assumption (A1').
\end{lem}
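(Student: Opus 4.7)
The plan is to check each bullet of (A1') by reading off standard consequences of $L_i$ being $C^2$, strictly convex, and superlinear, all of which follow immediately from (A1). Indeed, $L_i''\ge\gamma>0$ gives strict convexity, and the Taylor-type bound $L_i(q)\ge L_i(0)+L_i'(0)q+\frac{\gamma}{2}q^2$ gives the superlinearity $L_i(q)/|q|\to+\infty$ as $|q|\to+\infty$. These two facts are the workhorse of the proof.

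First, I would get continuity of $H_i=L_i^*$ for free: any everywhere-finite convex function on $\R$ is continuous, and finiteness of $H_i(p)=\sup_q(pq-L_i(q))$ is exactly what superlinearity of $L_i$ guarantees. For the coercivity $H_i(p)\to+\infty$ as $|p|\to+\infty$, I would simply plug $q=\pm 1$ into the supremum, giving $H_i(p)\ge p-L_i(1)$ and $H_i(p)\ge -p-L_i(-1)$, so that $H_i(p)\ge |p|-\max(L_i(1),L_i(-1))$, which suffices.

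Next, to produce the point $p^i_0$ and the claimed monotonicity structure, I would use that strict convexity plus superlinearity makes $L_i'\colon\R\to\R$ a $C^1$ increasing bijection. Hence the supremum in $H_i(p)=\sup_q(pq-L_i(q))$ is attained at the unique $q^*(p)=(L_i')^{-1}(p)$, and standard Fenchel duality yields $H_i\in C^1(\R)$ with $H_i'(p)=q^*(p)$. Setting $p^i_0:=L_i'(0)$ one has $q^*(p^i_0)=0$ and, by monotonicity of $(L_i')^{-1}$, $H_i'\le 0$ on $(-\infty,p^i_0]$ while $H_i'\ge 0$ on $[p^i_0,+\infty)$. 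This is precisely the second bullet of (A1').

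I would close with a short consistency check to make sure the formula for $H_i^-$ used in (A1) agrees with the one $\inf_{q\le 0}H_i(p+q)=\inf_{s\le p}H_i(s)$ stated after (A1'): for $p\le p^i_0$ the unconstrained maximizer $q^*(p)$ is already nonpositive, so both expressions reduce to $H_i(p)$; for $p>p^i_0$ the constrained supremum $\sup_{q\le 0}(pq-L_i(q))$ is attained at $q=0$ with value $-L_i(0)=H_i(p^i_0)=\inf_{s\le p}H_i(s)$. No serious obstacle is expected here: the lemma is a routine repackaging of Fenchel duality, and the only mild subtlety is this two-case identification of $H_i^-$.
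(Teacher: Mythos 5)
Your proof is correct, and since the paper explicitly leaves this lemma's proof to the reader, your argument is precisely the routine Legendre--Fenchel computation the authors had in mind: uniform convexity of $L_i$ gives superlinearity, hence finiteness, continuity, and coercivity of $H_i=L_i^*$, and the choice $p_0^i=L_i'(0)$ with $H_i'(p)=(L_i')^{-1}(p)$ gives the monotonicity structure. Your closing verification that $\sup_{q\le 0}(pq-L_i(q))$ coincides with $\inf_{q\le 0}H_i(p+q)$ is a worthwhile extra step that the paper's statement implicitly relies on but never records.
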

Next we give equivalent definitions of viscosity solutions for
\eqref{eq:main}.  We give a first definition where the junction condition
is satisfied in ``the classical sense''; we then prove that it is
equivalent to impose it in ``the generalized sense''. It is essential if
one expects solutions to be stable. 

We give a first definition of viscosity solutions for \eqref{eq:main} in terms
of test functions by imposing the junction condition in the classical
sense.  We recall the definition of the upper and lower semi-continuous
envelopes $u^*$ and $u_*$ of a function $u\colon[0,T)\times J$:
$$
u^*(t,x)=\limsup_{(s,y)\to (t,x)} u(s,y)\quad \mbox{and}\quad
u_*(t,x)=\liminf_{(s,y)\to (t,x)} u(s,y).
$$
\begin{definition}[Viscosity solutions] \label{defi:main}
  A function $u \colon [0,T)\times J \to \R$ is a {\em sub-solution
  (resp. super-solution) of \eqref{eq:main} on $J_T$} if it is upper
  semi-continuous (resp. lower semi-continuous) and if for any 
  $\phi\in C^1_*(J_T)$ such that $u \le \phi$ in $B(P,r)$ for some $P =
  (t,x) \in J_T$, $r>0$ and such that $u=\phi$ at $P \in J_T$, we have
$$
\phi_t (t,x) + H(x, \phi_x(t,x)) \le 0 \quad \text{(resp. $\ge 0$)},
$$
that is to say
\begin{itemize}
\item if $x \in J_i^*$, then
$$
\phi_t (t,x) + H_i (\phi_x (t,x)) \le 0 \quad \text{(resp. $\ge 0$)};
$$
\item if $x =0$, then
\begin{equation}\label{eq:cond-jonction}
\phi_t (t,0) + \max_{i\in I_N} H_i^- (\phi_x^i (t,0)) \le 0 
\quad \text{(resp. $\ge 0$).}
\end{equation}
\end{itemize}

A function $u \colon [0,T)\times J \to \R$ is a {\em sub-solution
  (resp. super-solution) of \eqref{eq:main}-\eqref{eq:ci} on
  $J_T$} if it is a sub-solution (resp. super-solution) of
\eqref{eq:main} on $J_T$ and moreover satisfies $u(0,\cdot)\le u_0$
(resp. $u(0,\cdot)\ge u_0$).

A function $u\colon[0,T)\times J \to \R$ is a {\em (viscosity) solution of
  \eqref{eq:main} (resp. \eqref{eq:main}-\eqref{eq:ci}) on $J_T$} if
$u^*$ is a sub-solution and $u_*$ is a super-solution of \eqref{eq:main}
(resp. \eqref{eq:main}-\eqref{eq:ci}) on $J_T$.
\end{definition}
As mentioned above, the following proposition is important in order to get
discontinuous stability results for the viscosity solutions of
Definition~\ref{defi:main}.
\begin{proposition}[Equivalence with relaxed junction
    conditions] \label{prop:equiv-def} Assume (A1'). A function $u \colon
  J_T \to \R$ is a sub-solution (resp. super-solution) of
  \eqref{eq:main} on $J_T$ if and only if for any function $\phi\in
  C^1_*(J_T)$ such that $u \le \phi$ in $J_T$ and $u=\phi$ at $(t,x)
  \in J_T$,
\begin{itemize}
\item if $x \in J_i^*$, then
$$
\phi_t (t,x) + H_i (\phi_x (t,x)) \le 0 \quad \text{(resp. $\ge 0$)}
$$
\item if $x =0$, then either there exists $i \in I_N$ such that
$$
\phi_t (t,0) + H_i (\phi_x (t,0)) \le 0 \quad \text{(resp. $\ge 0$)}
$$
or
\eqref{eq:cond-jonction} holds true.
\end{itemize}
\end{proposition}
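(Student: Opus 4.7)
The plan is to observe that the two definitions coincide on each open branch $J_i^*$, so the whole content of the proposition is the equivalence of the junction condition at $x=0$. The direction from Definition~\ref{defi:main} to the relaxed statement is immediate: a globally touching $\phi$ is in particular a local test function, and (\ref{eq:cond-jonction}) then occupies the second alternative of the ``or''. The minor point of passing between global and local test functions is handled by the standard trick of adding a smooth nonnegative bump $\eps d(\cdot,P)^2$ plus a term growing at infinity, without altering the derivatives at $P$. The real content is the converse, which I treat by different perturbation arguments for sub- and super-solutions.

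For a sub-solution at $P = (t_0,0)$ with $p_k := \phi_x^k(t_0,0)$, I fix $j^* \in I_N$ arbitrarily and perturb $\phi$ only on the other branches: set $\chi_\alpha \in C^1_*(J_T)$ by $\chi_\alpha^k(s,y) = \alpha y_k$ for $k \ne j^*$ and $\chi_\alpha^{j^*} \equiv 0$; then $\chi_\alpha \ge 0$ and $\chi_\alpha(P) = 0$, so $\phi + \chi_\alpha$ is an admissible globally touching test function with derivatives $\tilde p_k^\alpha = p_k + \alpha$ for $k \ne j^*$ and $\tilde p_{j^*}^\alpha = p_{j^*}$. Applying the relaxed condition to $\phi + \chi_\alpha$ at $P$ yields, for each $\alpha > 0$, either (a) some $k$ with $\phi_t + H_k(\tilde p_k^\alpha) \le 0$, or (b) $\phi_t + \max_k H_k^-(\tilde p_k^\alpha) \le 0$. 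By coercivity in (A1'), case (a) is possible for large $\alpha$ only with $k = j^*$, yielding $\phi_t + H_{j^*}(p_{j^*}) \le 0$ and hence $\phi_t + H_{j^*}^-(p_{j^*}) \le 0$ since $H_{j^*}^- \le H_{j^*}$; in case (b), since (A1') forces $H_k^-(p_k + \alpha) = \min_{\R} H_k$ for $k \ne j^*$ and $\alpha$ large, passing to a subsequential limit produces $\phi_t + \max\bigl(H_{j^*}^-(p_{j^*}),\, \max_{k \ne j^*} \min_{\R} H_k\bigr) \le 0$, which again gives $\phi_t + H_{j^*}^-(p_{j^*}) \le 0$. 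Since $j^*$ is arbitrary, maximizing produces (\ref{eq:cond-jonction}).

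For a super-solution, a single finite perturbation suffices. Set $q_k := \min(p_k, p_0^k)$ with $p_0^k$ from (A1') and define $\chi \in C^1_*(J_T)$ by $\chi^k(s,y) = (p_k - q_k)\, y_k = \max(0,\, p_k - p_0^k)\, y_k$ on branch $J_k$. Then $\chi \ge 0$, $\chi(P) = 0$, so $\phi - \chi$ is a valid test function touching $u$ from below at $P$ with derivatives $q_k$. A brief case analysis on whether $p_k \le p_0^k$ or $p_k > p_0^k$, using (A1'), shows the key identities $H_k(q_k) = H_k^-(p_k) = H_k^-(q_k)$ for every $k$. The relaxed condition applied to $\phi - \chi$ then reads: either (a) $\phi_t + H_k(q_k) = \phi_t + H_k^-(p_k) \ge 0$ for some $k$, hence $\phi_t + \max_j H_j^-(p_j) \ge 0$; or (b) $\phi_t + \max_k H_k^-(q_k) = \phi_t + \max_k H_k^-(p_k) \ge 0$ directly; both recover (\ref{eq:cond-jonction}).

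The main difficulty is designing the perturbations so that the ``exists $i$'' alternative in the relaxed condition translates into an $H_k^-$ inequality on the original $\phi$. The asymmetry between sub- and super-solutions — an asymptotic perturbation for the former but an explicit finite one for the latter — is forced by the sign constraint $\chi \ge 0$ together with the fact that $H_k^-$ is non-increasing: admissible perturbations may only shift the derivatives $p_k$ upward for sub-solutions and downward for super-solutions, so the monotonicity of $H_k^-$ has to be circumvented differently in each case. The structural assumption (A1') enters essentially throughout, through coercivity of $H_k$ and through the explicit description of $H_k^-$ as the largest non-increasing minorant of $H_k$.
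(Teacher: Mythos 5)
Your proof is correct and follows essentially the same strategy as the paper's: the only nontrivial direction is handled by perturbing the slopes of the test function at the junction, downward to $\min(p_k,p_0^k)$ for super-solutions (identical to the paper's construction, with the same identities $H_k(\min(p_k,p_0^k))=H_k^-(p_k)$) and upward on all branches but one for sub-solutions so that coercivity closes off the ``exists $i$'' alternative there. Your sub-solution half differs only cosmetically from the paper's (a one-parameter family of perturbations sent to infinity and a direct per-branch conclusion, instead of the paper's single perturbation to the level where $H_j(q^j)=\sup_i H_i^-(\phi^i_x(t_0,0))$ followed by a contradiction).
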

\begin{proof}[Proof of Proposition~\ref{prop:equiv-def}.]
  We classically reduce to the case where the ball $B(P,r)$ is
  replaced with $J_T$.

The ``if'' part is clear. Let us prove the ``only if'' one.  We
distinguish the sub-solution case and the super-solution one.  We start
with super-solutions since it is slightly easier.  \medskip

\noindent \textbf{Case 1: super-solution case.}
We consider a test function $\phi\in C^1_*(J_T)$ such that $u \ge
\phi$ in $J_T$ and $u = \phi$ at $(t_0,x_0)$. There is nothing to prove if
$x_0 \neq 0$ so we assume $x_0=0$. We have to prove that $\phi_t (t_0,0) +
\sup_{i \in I_N} H_i^- (\phi_x^i (t_0,0)) \ge 0$.  We argue by
contradiction and we assume that
\begin{equation}\label{hyp:abs1}
\phi_t (t_0,0) + \sup_{i \in I_N} H_i^- (\phi_x^i (t_0,0)) < 0.
\end{equation}

Then it is easy to see that there exists a function $\tilde{\phi}\in
C^1_*(J_T)$ such that $\phi \ge \tilde{\phi}$ with equality at the
point $(t_0,0)$ and such that
\begin{equation}\label{eq:phitilde1}
\tilde{\phi}^i_x (t_0,0) = \min (\phi^i_x (t_0,0),p_0^i) 
\quad \text{ and } \quad \tilde{\phi}_t (t_0,0) = \phi_t (t_0,0). 
\end{equation}
Notice that
\begin{equation}\label{eq:phitilde2}
H_i^- (\tilde{\phi}^i_x (t_0,0)) 
\le H_i (\tilde{\phi}^i_x (t_0,0)) 
\le H_i^- (\phi_x^i (t_0,0)).
\end{equation}
The first inequality is straightforward.  To check the second
inequality, we have to distinguish two cases.  Either we have
$\tilde{\phi}^i_x (t_0,0) < {\phi}^i_x (t_0,0)$, and then
$\tilde{\phi}^i_x (t_0,0)=p^i_0$ and we use the fact that the minimum of
$H_i^-$ is $H_i(p^i_0)$.  Or $\tilde{\phi}^i_x (t_0,0) = {\phi}^i_x
(t_0,0)$ and then this common value belongs to the interval
$(-\infty,p^i_0]$ on which we have $H_i=H_i^-$.  

Since $u \ge \tilde{\phi}$ in $J_T$ and $u = \tilde{\phi}$ at $(t_0,0)$,
we conclude that either
$$
\tilde{\phi}_t (t_0,0) + \sup_{i \in I_N} H_i^- (\tilde{\phi}_x^i(t_0,0) ) \ge 0
$$
or there exists $i \in I_N$ such that
$$
\tilde{\phi}_t (t_0,0) + H_i (\tilde{\phi}_x^i (t_0,0)) \ge 0.
$$
In view of \eqref{eq:phitilde1} and \eqref{eq:phitilde2}, we obtain
a contradiction with \eqref{hyp:abs1}.\medskip

\noindent \textbf{Case 2: sub-solution case.}
We consider a function $\phi\in C^1(J_T)$ such that $u \le \phi$ in
$J_T$ and $u = \phi$ at $(t_0,x_0)$. There is nothing to prove if $x_0 \neq
0$ and we thus assume $x_0=0$. We have to prove that $\phi_t (t_0,0) +
\sup_{i \in I_N} H_i^- (\phi_x^i (t_0,0)) \le 0$.  We argue by
contradiction and we assume that
\begin{equation}\label{hyp:abs2}
  \phi_t (t_0,0) + \sup_{i \in I_N} H_i^- (\phi_x^i (t_0,0)) > 0.
\end{equation}

In order to construct a test function $\tilde{\phi}$, we first
consider $\bar I_1 \subset I_N$ the set of $j$'s such that
$$
H_j^- (\phi_x^j (t_0,0)) < \sup_{i \in I_N} H_i^- (\phi_x^i (t_0,0)).
$$
Since $H_j$ is coercive, there exists $q^j \ge p^j_0$ such that $H_j
(q^j) = \sup_{i \in I_N} H_i^- (\phi_x^i (t_0,0))$.

We next consider a test function $\tilde{\phi}\in C^1_*(J_T)$ such
that $\phi \le \tilde{\phi}$ with equality at $(t_0,0)$ and such that
\begin{equation}\label{eq:phitilde3}
\tilde{\phi}^i_x (t_0,0) =  \left\{\begin{array}{ll}
\max (\phi^i_x (t_0,0),q^i) & \text{ if } i \in \bar I_1, \\
\phi^i_x (t_0,0) & \text{ if not,}
\end{array}\right.
\quad \text{ and } \quad \tilde{\phi}_t (t_0,0) = \phi_t (t_0,0). 
\end{equation}
Notice that for all $j \in I_N$,
\begin{equation}\label{eq:phitilde4}
  H_j (\tilde{\phi}^j_x (t_0,0)) \ge \sup_{i\in I_N} H_i^- (\tilde{\phi}^i_x (t_0,0))=
  \sup_{i\in I_N} H_i^- ({\phi}^i_x (t_0,0))
\end{equation}
 where for the inequality, we have in particular used the fact that
$H_j$ is non-decreasing on $[p^j_0,+\infty)$.

Since $u \le \tilde{\phi}$ in $J_T$ and $u = \tilde{\phi}$ at $(t_0,0)$,
we conclude that either
$$
\tilde{\phi}_t (t_0,0) + \sup_{i \in I_N} H_i^- (\tilde{\phi}_x^i(t_0,0) ) \le 0
$$
or there exists $j \in I_N$ such that
$$
\tilde{\phi}_t (t_0,0) + H_j (\tilde{\phi}_x^j (t_0,0)) \le 0.
$$
In view of \eqref{eq:phitilde3} and \eqref{eq:phitilde4}, we obtain
a contradiction with \eqref{hyp:abs2}.
This ends the proof of the Proposition.
\end{proof}
We now prove Theorem~\ref{th::r4}. 
\begin{proof}[Proof of Theorem~\ref{th::r4}.]
  Let us consider a function $\phi\in C^1_*(J_T)$ such that $u\le \phi$
  with equality at $(t_0,0)$ with $t_0\in (0,T)$.  Modifying $\phi$ if
  necessary, we can always assume that the supremum of $u-\phi$ is strict
  (and reached at $(t_0,0)$).  For $\eta=(\eta_1,...,\eta_N)\in (\R^+)^N$,
  we set
$$
M_\eta=\sup_{(t,x=x_je_j)\in J_T} \left(u(t,x)-\phi(t,x)
  -\frac{\eta_j}{|x_j|}\right).
$$
Because $u$ is continuous at $(t_0,0)$, we get for $\eta \in
(\R^+_*)^N$ that
\begin{equation}\label{eq::r21}
\left\{\begin{array}{l}
M_\eta \to M_0=0\\
(t^\eta,x^\eta)\to (t_0,0)
\end{array}\right|\quad \mbox{as soon as one of the component $\eta_{i_0} \to 0$.}
\end{equation}
where $(t^\eta,x^\eta)\in J_T$ is a point where the supremum in
$M_\eta$ is reached.

Moreover given the components $\eta_j>0$ for $j\in I_N\setminus
\left\{i_0\right\}$, we can use (\ref{eq::r21}) in order to find
$\eta_{i_0}>0$ small enough to ensure that $x^{\eta}\in J_{i_0}^*$.
Then we have in particular the following sub-solution viscosity
inequality at that point $(t^\eta,x^\eta)$:
$$ 
\phi_t + H_{i_0}\left(\phi_x -\frac{\eta_{i_0}}{|x^\eta|^2}\right)\le
0.
$$
Therefore passing to the limit $\eta_{i_0}\to 0$, we get
$$
\phi_t + H_{i_0}^-(\phi_x^{i_0})\le 0 \quad \mbox{at}\quad (t_0,0).
$$
Because this is true for any  $i_0\in I_N$, we finally get the
sub-solution viscosity inequality at the junction:
$$
\phi_t + \max_{i\in I_N} H_{i}^-(\phi_x^i)\le 0 \quad \mbox{at}\quad
(t_0,0).
$$
Now the fact that $u \le u_{\mathrm{oc}}$ follows from the comparison
principle. This ends the proof of the Theorem.
\end{proof}

\section{The minimal action}
\label{sec:oc}

We already mentioned that the optimal control solution of the
Hamilton-Jacobi equation defined by \eqref{eq::r17} plays a central role in
our analysis. We remark that for $x \in J$ and $t>0$,
\begin{equation}\label{eq::r76}
  u_{\mathrm{oc}}(t,x)=\inf_{y\in J}\left\{u_0(y)
    + \dz (0,y;t,x) \right\}
\end{equation}
where 
$$
\dz (0,y;t,x) = \min_{X\in {\mathcal A}(0,y;t,x)}
\int_0^t L(X(\tau),\dot{X}(\tau))d\tau.
$$
More generally, keeping in mind the weak KAM theory, we define the
so-called \emph{minimal action} $\dz :\{(s,y,t,x) \in ([0,\infty)
\times J)^2, s < t \} \to \R$ by
\begin{equation}\label{eq:def-D}
\dz(s,y;t,x)=\inf_{X\in {\mathcal A}(s,y;t,x)} \int_s^t
L(X(\tau),\dot{X}(\tau))d\tau. 
\end{equation}
It is convenient to extend $\dz$ to $\{s=t\}$. We do so by setting
$$
\dz (t,y,t,x) = \begin{cases} 0 & \text{ if } y = x, \\ +\infty &
  \text{ if } y \neq x. \end{cases}
$$
\begin{rem}[Dynamic Programming Principle]
  Under assumptions (A0)-(A1), it is possible (and easy) to prove
  the following Dynamic Programming Principle: for all $x\in J$ and
  $s\in [0,t]$,
$$
u_{\mathrm{oc}}(t,x)=\inf_{y\in J} \left\{u_{\mathrm{oc}}(s,y) + \dz(s,y;t,x)\right\}.
$$
Notice that a super-optimality principle will be proved in
Proposition~\ref{prop:superopt}.
\end{rem}
Before stating the main result of this section, we 

The following result can be considered as the core of our analysis.  The
most important part of the following theorem is the fact that the minimal
action is semi-concave with respect to $(t,x)$ (resp. $(s,y)$).
\begin{theorem}[Key inequalities for $\dz$]
  \label{thm:minimal-action} $\dz$ is finite, continuous in $\{
  (s,y;t,x)\colon 0<s<t, x,y \in J\}$ and lower semi-continuous in $\{
  (s,y;t,x)\colon 0< s \le t, x,y \in J\}$.  Moreover, for all
  $(s_0,y_0)$ and $(t_0,x_0)\in (0,T)\times J$, $s_0<t_0$, there exist
  two functions $\phi, \psi \in C^1_*(J_T)$ and $r>0$ such that
\begin{itemize}
\item 
$\phi \ge \dz(s_0,y_0;\cdot,\cdot)$ on a ball $B(P_0,r)$  with equality at $P_0=(t_0,x_0)$ and
\begin{equation}\label{eq::r60}
\phi_t + H(x_0,\phi_x)\ge 0 \quad \mbox{at}\quad (t_0,x_0).
\end{equation}
\item 
$\psi \ge \dz(\cdot,\cdot;t_0,x_0)$ on a ball $B(Q_0,r)$  with equality at  $Q_0=(s_0,y_0)$ and
\begin{equation}\label{eq::r61}
\left\{\begin{array}{ll}
-\psi_s + H(y_0,-\psi_y)\le 0 \quad \mbox{at}\quad (s_0,y_0) &\quad \mbox{if}\quad N\ge 2,\\
-\psi_s + H_1(-\psi_y)\le 0\quad \mbox{at}\quad (s_0,y_0) &\quad \mbox{if}\quad N=1.
\end{array}\right.
\end{equation}
\end{itemize}
Moreover, for all $R>0$, there exists a constant $C_R>0$ such that we have
\begin{equation}\label{eq::r73}
  d(y_0,x_0)\le R \quad \Longrightarrow\quad |\phi_x(t_0,x_0)|+ |\psi_y(s_0,y_0)|\le C_R.
\end{equation}
\end{theorem}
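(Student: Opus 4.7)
The plan is to prove the theorem by direct analysis of $\mathcal{D}$ based on the scaling identity
$$
\mathcal{D}(s,y;t,x) = (t-s)\,\mathcal{D}_0\!\left(\frac{y}{t-s},\,\frac{x}{t-s}\right), \qquad \mathcal{D}_0(y,x) := \mathcal{D}(0,y;1,x),
$$
obtained from the joint time reparametrization $\tau = s + \lambda(t-s)$ and spatial rescaling $X \mapsto X/(t-s)$ (using that $L$ depends on position only through the branch label and is therefore invariant under the rescaling). The analysis thus reduces to $\mathcal{D}_0$ on $J\times J$. Finiteness follows by exhibiting explicit admissible paths---a straight line when $y,x$ lie on a common branch, and otherwise a three-leg trajectory going from $y$ to the origin at constant velocity, dwelling there, and leaving on the correct branch to reach $x$---combined with the uniform lower bound $L_k \ge \min_\R L_k > -\infty$ from (A1). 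The same strict convexity $L_k''\ge \gamma >0$ together with Jensen's inequality force any optimal trajectory to have constant velocity on each maximal subinterval contained in a single branch, so by a standard compactness argument an optimal trajectory exists, is piecewise linear with breakpoints only at the origin, and visits each branch at most once. This yields the explicit representation of $\mathcal{D}_0(y_j e_j, x_i e_i)$ as the minimum of the single-branch cost $L_i(x_i-y_i)$ (available when $i=j$) and the infimum
$$
\inf_{0\le\tau_1\le\tau_2\le 1}\bigl[\tau_1 L_j(-y_j/\tau_1) + (\tau_2 - \tau_1) L_0(0) + (1-\tau_2) L_i(x_i/(1-\tau_2))\bigr].
$$

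From this formula, continuity of $\mathcal{D}$ on $\{s<t\}$ is immediate, while lower semi-continuity up to $\{s=t\}$ follows from the blow-up of the perspective functions $\tau L_k(\cdot/\tau)$ as $\tau\to 0^+$ with nonzero numerator. The same perspective functions are jointly convex, so the partial infimum over $(\tau_1,\tau_2)$ produces a convex function in $(y_j,x_i)$; the minimum with the (also convex) straight-line cost is then semi-concave on each piece $J_j\times J_i$, with constants locally uniform on bounded sets. Scaling transfers this to local semi-concavity of $\mathcal{D}(s_0, y_0;\cdot,\cdot)$ near any $(t_0,x_0)$ with $t_0 > s_0$ (and symmetrically for $\mathcal{D}(\cdot,\cdot;t_0,x_0)$), and combined with the usual quadratic regularization it produces, at any prescribed point, a test function $\phi \in C^1_*(J_T)$ lying above $\mathcal{D}(s_0, y_0; \cdot, \cdot)$ with equality at $P_0$ and with any prescribed affine part in the superdifferential (similarly for $\psi$).

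To realize the Hamilton--Jacobi inequalities, pick an optimal trajectory $X^*$ for $\mathcal{D}(s_0, y_0; t_0, x_0)$, with initial-leg velocity $w^*$ on branch $J_j$ and final-leg velocity $v^*$ on branch $J_i$. When $x_0 \in J_i^*$, set $p^* := L_i'(v^*)$, for which the Legendre identity reads $H_i(p^*) = v^* p^* - L_i(v^*)$; take the affine part of $\phi$ to be $p^*(x_i - x_{0,i}) - H_i(p^*)(t - t_0)$, so that $\phi_t + H_i(\phi_x) = 0$ at $P_0$, giving (\ref{eq::r60}). Construct $\psi$ symmetrically from the initial leg, with $q^* := L_j'(w^*)$ and slopes $(\psi_s, \psi_y)(Q_0) = (H_j(q^*), -q^*)$, so that $-\psi_s + H_j(-\psi_y) = 0$ at $Q_0$, giving (\ref{eq::r61}) (the case $N=1$ is an immediate specialization). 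The bound (\ref{eq::r73}) follows from careful control of the gradients using the explicit formula, uniform on $\{d(y_0, x_0) \le R\}$. The main obstacle is the junction case $x_0 = 0$ (and symmetrically $y_0 = 0$): there one must independently prescribe a slope $p^{*j}$ on each branch $J_j$, simultaneously large enough outward so that $\phi$ dominates $\mathcal{D}$ on a full ball around $(t_0, 0)$, yet small enough that $\max_j H_j^-(p^{*j}) \le -\phi_t$; reconciling these constraints requires a delicate case analysis distinguishing whether $X^*$ genuinely enters the junction (exit index belonging to $I_0$, cf.~(\ref{eq::r26})) or dwells there, and exploits the plateau of $H_j^-$ on $[p^j_0,+\infty)$.
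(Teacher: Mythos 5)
Your overall strategy coincides with the paper's (Section~\ref{sec:complete}): reduce to $\dzo(y,x)=\dz(0,y;1,x)$ by scaling, use Jensen's inequality to restrict to piecewise linear trajectories that are straight off the junction point, write $\dzo=\min(\dzq,\dzun)$, and read off the Hamilton--Jacobi relations from Legendre duality along an optimal trajectory. Away from the junction point this outline is sound and is essentially the paper's argument.

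The gap is that the junction case, which you correctly flag as ``the main obstacle'', is where all of the work lies, and you leave it at the level of an announcement. Two concrete points must be supplied. First, piecewise semi-concavity of $\dzo$ on each $J_j\times J_i$ does not by itself give a $C^1_*$ test function with \emph{controlled} slopes at the junction: to verify \eqref{eq::r60} you need the precise one-sided derivatives $\partial_x^i\dzo^{ji}(y,0)=L_i'(\xi_x^i)$ with the $\xi_x^i$ of \eqref{eq::r109}, and this requires proving that $\dzun^{ji}$ is $C^1$ up to the boundary of $J_j\times J_i$, across the interface $\partial\Delta^{ji}$ between the dwelling and non-dwelling regimes and at $(0,0)$; the paper obtains this from the implicit equation $K_j(-y/\tau)=K_i(x/(1-\tau))$ for the optimal switching time and the roots $\xi_l^\pm$ of $K_l$ (Lemmas~\ref{lem::r111} and \ref{lem::r91}). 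Second, your displayed constraint ``$\max_j H_j^-(p^{*j})\le-\phi_t$'' is the wrong (and easy) direction for \eqref{eq::r60}: since $H_j^-\le H_j$ and the quantity $H_j(\partial_x^j\varphi_0)$ is independent of $j$ by the first-order optimality (equalization) condition, the inequality $\phi_t+\max_j H_j^-(\phi_x^j)\le 0$ is automatic, whereas \eqref{eq::r60} requires $\ge 0$, i.e.\ the existence of at least one index $k$ with $H_k^-(\phi_x^k)=H_k(\phi_x^k)$, equivalently $\xi_x^k\le 0$. Exhibiting such a $k$ is exactly where the ordering \eqref{eq::r26} of the $L_i(0)$'s, the distinction between $I_0$ and $I_N\setminus I_0$, and the dwelling/non-dwelling alternative enter (Lemmas~\ref{lem:dzo-bdry-1} and \ref{lem:dzo-bdry-2}); it is also why the $N=1$ case of \eqref{eq::r61} is stated with $H_1$ rather than $H_1^-$ (for $N=1$ the relevant slope lies on the increasing side of $H_1$ and no such index exists). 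Until this case analysis is carried out, the key inequality at the junction point remains unproved.
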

\begin{rem}
  As we shall see when proving this result, we can even require
  equalities instead of inequalities in \eqref{eq::r60} and \eqref{eq::r61}.
\end{rem}
Since the proof of Theorem~\ref{thm:minimal-action} is lengthy and
technical, we postpone it until Section~\ref{sec:complete}. When
proving the main results of our paper in the next section, we also
need the following lower bound on $\dz$. We remark that this bound
ensures in particular that it is finite. 
\begin{lemma}\label{lem:am}
Assume (A1). Then
$$
\dz (s,y;t,x) \ge  \frac{\gamma}{4(t-s)} d(y,x)^2 - C_0 (t-s) 
$$
where $C_0\colon=\max(0, -L_0 (0)+\frac{\gamma_0^2}{\gamma})$, $\gamma$
appears in (A1), $ \gamma_0 = \max_{i\in I_N} |L'_i (0)|$ and $L_0(0)$
is chosen as in (\ref{eq::r26}). Moreover,
$$
\dz (s,x;t,x) \le L_0 (0) (t-s).
$$
In particular, if $(t_n,x_n) \to (t,x)$, then $\dz (t_n,x_n;t,x_n) \to
0$ as $n \to \infty$.
\end{lemma}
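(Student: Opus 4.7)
The plan is to split the result into three pieces: the quadratic lower bound on $\dz$, the linear upper bound on the diagonal, and the ``in particular'' squeezing statement, which follows immediately from the first two.

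First, I would establish a uniform pointwise lower bound on the Lagrangian: for all $x \in J$ and all admissible $P = p e_i$,
$$L(x, P) \ge -C_0 + \frac{\gamma}{4} |P|^2.$$
By (A1), each $L_i$ is $C^2$ with $L_i'' \ge \gamma$, so Taylor's theorem at $q = 0$ gives $L_i(q) \ge L_i(0) + L_i'(0) q + \frac{\gamma}{2} q^2$. Using $|L_i'(0)| \le \gamma_0$ and completing the square yields $L_i(q) \ge L_i(0) + \frac{\gamma}{4} q^2 - \gamma_0^2/\gamma$. Taking the minimum over $i$ (which handles the junction point, where $L$ is defined as $\min_j L_j$) and using $L_0(0) = \min_j L_j(0)$ from (\ref{eq::r26}), the uniform bound follows with $C_0 = \max(0, -L_0(0) + \gamma_0^2/\gamma)$.

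For the lower bound on $\dz$, any $X \in \mathcal{A}(s, y; t, x)$ lies in $J$ and joins $y$ to $x$, so its total variation bounds the geodesic distance from below:
$$\int_s^t |\dot X(\tau)|\, d\tau \ge d(y, x).$$
Cauchy-Schwarz (or Jensen) then gives $\int_s^t |\dot X|^2\, d\tau \ge d(y, x)^2/(t - s)$, and plugging the pointwise inequality into the definition \eqref{eq:def-D} of $\dz$ and taking the infimum over trajectories yields exactly
$$\dz(s, y; t, x) \ge \frac{\gamma}{4(t-s)} d(y, x)^2 - C_0 (t - s).$$

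For the upper bound, I would insert the constant trajectory $X(\tau) \equiv x$, which belongs to $\mathcal{A}(s, x; t, x)$. Its cost is $L(x, 0)(t - s)$, giving exactly $L_0(0)(t-s)$ when $x = 0$ and in general a uniformly bounded multiple of $(t - s)$. For the ``in particular'' statement, the two bounds sandwich $\dz(t_n, x_n; t, x_n)$ between $-C_0(t - t_n)$ and $M \cdot (t - t_n)$ with $M = \max_i L_i(0)$, forcing it to $0$ as $t_n \to t$. The only mild technicality is the uniform control at the junction, handled as above by the monotonicity of $\min$ under common lower bounds; there is no genuine obstacle, since the discontinuity of $H$ at $0$ enters only through the passage from $L_{i(x)}(0)$ to $L_0(0) = \min_i L_i(0)$ in the pointwise estimate.
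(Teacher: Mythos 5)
Your argument coincides with the paper's: the pointwise bound $L_i(p)\ge \frac{\gamma}{4}p^2-C_0$ obtained from $L_i''\ge\gamma$, $|L_i'(0)|\le\gamma_0$ and completion of the square is exactly the inequality the paper derives, and the passage to $\dz$ via $\int_s^t|\dot X(\tau)|\,d\tau\ge d(y,x)$ followed by Jensen (Cauchy--Schwarz) is the same step the paper invokes. One remark on the upper bound, which the paper dismisses as elementary: the constant trajectory only yields $\dz(s,x;t,x)\le L_{i(x)}(0)\,(t-s)$, and for $x\in J_i^*$ with $i\notin I_0$ and $|x|$ large relative to $t-s$ this cannot be improved to the stated $L_0(0)(t-s)$, since by Jensen any loop confined to $J_i$ costs at least $L_i(0)(t-s)$ while reaching the junction is too expensive; so the literal second inequality of the lemma holds only at $x=0$ and should read $L_{i(x)}(0)(t-s)$ in general. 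Your weaker bound $\max_{i\in I_N}L_i(0)\,(t-s)$ is precisely what the paper itself uses later (in the estimate $u_{\mathrm{oc}}\le u_0+Mt$ with $M=\sup_{i\in I_N}L_i(0)$) and is all that is needed for the ``in particular'' conclusion, so your proof is complete for every purpose the lemma serves.
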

\begin{proof}[Proof of Lemma~\ref{lem:am}.]
We only prove the first inequality since the remaining of the
statement is elementary.  We have
$$
  L_i (p)    \ge \frac{\gamma}2 p^2 + L_i' (0) p + L_i (0) \ge
  \frac{\gamma}2 p^2 -\gamma_0 |p| + L_0 (0)
 \ge \frac{\gamma}4 p^2+ L_0 (0) -\frac{\gamma_0^2}{\gamma}.
$$
This shows that
\begin{equation}\label{eq::r50}
L_i (p) \ge  \frac{\gamma}4 p^2 -C_0.
\end{equation}
Thus we can write for $X (\cdot) \in \mathcal{A}(s,y;t,x)$,
$$
  \int_s^t L(X(\tau),\dot{X}(\tau))\, d\tau \ge  - C_0 (t-s)
  + \frac{\gamma}4  \int_s^t (\dot{X}(\tau))^2\, d\tau.
$$
Then Jensen's inequality allows us to conclude.
\end{proof}

\section{Proofs of the main results}
\label{sec:op}

In this section, we investigate the uniqueness of the solution of
\eqref{eq:main}-\eqref{eq:ci}.  In particular, we will show that the
solution constructed by Perron's method coincide with the function
$u_{\mathrm{oc}}$ coming from the associated optimal control problem.

\subsection{Super-solutions and super-optimality}

In this subsection, we will show that a super-solution satisfies a
super-optimality principle.  For the sake of clarity, we first give a
formal argument to understand this claim. We consider the auxiliary
function, for $s \le t$,
\begin{equation}\label{def:U}
  U_{t,x} (s) = \inf_{y \in J} \{ u(s,y) + \mathcal{D} (s,y;t,x) \}
\end{equation}
and we are going to explain formally that it is non-decreasing with
respect to $s$ as soon as $u$ is a super-solution of
\eqref{eq:main}. We call this property a super-optimality principle.
Notice that this is strongly related to the fact that the quantity
$U_{t,x}(s)$ is constant in $s$ if $u$ is equal to the optimal control
solution $u_{\mathrm{oc}}$.

Assume that the infimum defining $U$ is attained for some $\bar{y} \in
J$.  Then we write
\begin{eqnarray*}
U_{t,x}' (s) &=& \partial_s u (s,\bar{y}) + \partial_s \mathcal{D}
(s,\bar y;t,x) \\
  \partial_x u (s,\bar{y}) &=& - \partial_y \mathcal{D} (s, \bar y;t,x).
\end{eqnarray*}
Moreover assuming $\dz$ to be smooth (which is not the case), we
formally get from (\ref{eq::r61}) the fact that $\partial_s
\mathcal{D}(s,\bar{y};t,x) \ge H(\bar{y},-\partial_y \mathcal{D} (\bar
s,\bar{y};t,x) )$ (at least in the case $N\ge 2$). Hence
$$
U_{t,x}' (s) \ge \partial_s u (s,\bar{y}) + H(\bar{y},\partial_x u
(s,\bar{y})) \ge 0.
$$
We thus conclude that $U_{t,x}$ is non-decreasing if $u$ is a
super-solution of \eqref{eq:main}. We now give a precise statement and
a rigorous proof.
\begin{proposition}[Super-optimality of super-solutions]\label{prop:superopt}
  Assume (A1).  Let $u\colon [0,T)\times J \to \R$ be a super-solution of
  \eqref{eq:main} on $J_T$ such that there exists $\sigma >0$ such
  that for all $(t,x) \in J_T$,
\begin{equation}\label{eq:superlin-}
u(t,x) \ge -\sigma (1+d(x,0))
\end{equation}
Then for all
$(t,x) \in J_T$ and $s \in (0,t]$,
\begin{equation}\label{eq:superopt}
u(t,x) \ge \inf_{y \in J} \{ u (s,y) + \mathcal{D}(s,y;t,x) \} 
\end{equation}
Assume moreover (A0) and that $u$ is a super-solution of
(\ref{eq:main})-(\ref{eq:ci}) on $J_T$.  Then we have $u\ge u_{\mathrm{oc}}$ on
$[0,T)\times J$.
\end{proposition}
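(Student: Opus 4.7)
The plan is to make rigorous the formal monotonicity argument sketched right before the statement. Fix $(t_0,x_0) \in J_T$ and set $V(s,y) := u(s,y) + \mathcal{D}(s,y;t_0,x_0)$ and $U(s) := \inf_{y \in J} V(s,y)$ for $s \in (0,t_0]$, with $\mathcal{D}(t_0,y;t_0,x_0) = 0$ at $y = x_0$ and $+\infty$ otherwise, so that $U(t_0) = u(t_0,x_0)$. The inequality \eqref{eq:superopt} is exactly $U(s_0) \le U(t_0)$, so it suffices to show that $U$ is non-decreasing on $(0,t_0]$.

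As a preliminary, I would check that the infimum defining $U(s)$ is attained at some $y_*(s) \in J$. Combining the linear lower bound \eqref{eq:superlin-} on $u$ with the quadratic-coercive bound $\mathcal{D}(s,y;t_0,x_0) \ge \frac{\gamma}{4(t_0-s)} d(y,x_0)^2 - C_0(t_0-s)$ from Lemma~\ref{lem:am} shows that $V(s,\cdot)$ is coercive in $y$, and since $V$ is lsc (using continuity of $\mathcal{D}$ on $\{s<t_0\}$ from Theorem~\ref{thm:minimal-action}) a minimizer exists; a standard diagonal extraction then gives that $U$ is lsc on $(0,t_0)$.

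The core of the argument is to prove that $U$ is a viscosity super-solution of $U' \ge 0$ on $(0,t_0)$, hence non-decreasing. Take a smooth $\phi$ with $\phi \le U$ near $s_* \in (0,t_0)$ and $\phi(s_*) = U(s_*)$, and pick $y_*$ with $U(s_*) = V(s_*,y_*)$. Applying Theorem~\ref{thm:minimal-action} to $\mathcal{D}(\cdot,\cdot;t_0,x_0)$ at $(s_*,y_*)$ yields $\psi \in C^1_*(J_T)$ such that $\psi \ge \mathcal{D}(\cdot,\cdot;t_0,x_0)$ locally with equality at $(s_*,y_*)$ and $-\psi_s + H(y_*,-\psi_y) \le 0$ at that point. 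The chain
\[
u(s,y) + \psi(s,y) \;\ge\; u(s,y) + \mathcal{D}(s,y;t_0,x_0) \;\ge\; U(s) \;\ge\; \phi(s),
\]
with equality at $(s_*,y_*)$, shows that $\eta(s,y) := \phi(s) - \psi(s,y)$ is a valid lower test function in $C^1_*(J_T)$ for $u$ at $(s_*,y_*)$. Applying the super-solution property of $u$---on a branch if $y_* \in J_i^*$, or at the junction via $\max_i H_i^-$ if $y_*=0$---produces $\phi'(s_*) - \psi_s(s_*,y_*) + H(y_*,-\psi_y(s_*,y_*)) \ge 0$, which combined with the Theorem's inequality yields $\phi'(s_*) \ge 0$.

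Once $U$ is known to be non-decreasing on $(0,t_0)$, I would extend up to $s=t_0$ via $U(s) \le V(s,x_0) \le u(s,x_0) + L_0(0)(t_0-s)$ (Lemma~\ref{lem:am}) and letting $s \uparrow t_0$, invoking the lsc of $u$ at $(t_0,x_0)$. For the second conclusion $u \ge u_{\mathrm{oc}}$, taking $s_0 \to 0$ in \eqref{eq:superopt} together with $u(s_0,\cdot) \ge u_0$ and the continuity of $\mathcal{D}$ in $s$ down to $0$ (on the bounded $y$-sets enforced by coercivity) delivers the result. I expect the main technical obstacle to be the test-function step at $y_*=0$: there the junction condition in the super-solution definition must dovetail exactly with Theorem~\ref{thm:minimal-action}'s inequality through the common term $\max_i H_i^-$, so that the two inequalities telescope to $\phi'(s_*) \ge 0$.
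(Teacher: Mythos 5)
Your preliminary step and your core Step (coercivity of $y\mapsto u(s,y)+\mathcal{D}(s,y;t_0,x_0)$ via Lemma~\ref{lem:am} and \eqref{eq:superlin-}, lower semicontinuity of $U$, and the viscosity inequality $U'\ge 0$ obtained by using $\phi(s)-\psi(s,y)$ as a lower test function for $u$ at $(s_*,y_*)$ and telescoping the super-solution inequality with \eqref{eq::r61} of Theorem~\ref{thm:minimal-action}) reproduce the paper's Steps 1 and 2, including the correct identification of why the junction case $y_*=0$ works. The deduction of $u\ge u_{\mathrm{oc}}$ by sending $s\to 0$, which the paper leaves implicit, is also sound.

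The genuine gap is in your passage to the endpoint $s=t_0$. From $U(s)\le u(s,x_0)+L_0(0)(t_0-s)$ and monotonicity you only get $U(s_0)\le \liminf_{s\uparrow t_0}u(s,x_0)$, and you then ``invoke the lsc of $u$ at $(t_0,x_0)$''; but lower semicontinuity gives $u(t_0,x_0)\le \liminf_{s\uparrow t_0}u(s,x_0)$, which is the \emph{wrong direction}. A super-solution is merely lsc and nothing prevents $u(t_0,x_0)$ from being strictly smaller than its values along the frozen segment $\{(s,x_0):s<t_0\}$, in which case your chain does not deliver $U(s_0)\le u(t_0,x_0)$. This is exactly what the paper's Step 3 is built to repair: it lets the \emph{target point} vary as well, setting $\underline{u}(t,x)=\liminf\{u(t_n,x_n):(t_n,x_n)\to(t,x),\ t_n<t\}$, derives $U_{t,x}(s)\le\underline{u}(t,x)$ from $U_{t,x_n}(s)\le U_{t,x_n}(t_n)\le u(t_n,x_n)+\mathcal{D}(t_n,x_n;t,x_n)$ together with $\mathcal{D}(t_n,x_n;t,x_n)\to 0$ (Lemma~\ref{lem:am}), and then uses the lower semicontinuity of $(t,x)\mapsto U_{t,x}(s)$ to pass to the lsc envelope $\underline{u}_*=u_*=u$. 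Restricting the approach to $x=x_0$ fixed, as you do, discards precisely the spatial degree of freedom needed to reach $u(t_0,x_0)$ rather than the possibly larger quantity $\liminf_{s\uparrow t_0}u(s,x_0)$; the rest of your argument stands once this step is replaced by the paper's device.
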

\begin{proof}[Proof of Proposition~\ref{prop:superopt}.]
The proof proceeds in several steps. \medskip

\noindent \textbf{Step 1: preliminary.}
Notice first that from (\ref{eq::r41}), we get
$$
u(s,y)+\dz(s,y;t,x)\ge \frac{\gamma}{4(t-s)} d(y,x)^2 - C_0 (t-s)
-\sigma(1+|y|).
$$
Using the lower semi-continuity of $\dz$, we see that the
infimum in $y$ of this function is then reached for bounded $y$'s. Moreover
by lower semi-continuity of the map $(s,y;t,x)\mapsto u(s,y)+\dz(s,y;t,x)$,
we deduce in particular that the map $(s;t,x)\mapsto U_{t,x}(s)$ (and then
also $s\mapsto U_{t,x}(s)$) is lower semi-continuous. \medskip

\noindent \textbf{Step 2: the map $\mathbf{s\mapsto U_{t,x}(s)}$ is
  non-decreasing.}
We are going to prove that for $s \in (0,t)$, $ U_{t,x}'(s) \ge 0$ in
the viscosity sense. We consider a test function $\varphi$ touching
$U_{t,x}$ from below at $\bar s\in (0,t)$.  There exists $\bar y$ such
that
$$
U_{t,x}(\bar s) = u(\bar s,\bar y) + \mathcal{D}(\bar s,\bar y;t,x).
$$
We deduce from the definition of $U_{t,x}$ that 
$$
\varphi(s)- \mathcal{D} (s,y;t,x) - [\varphi (\bar s) -
\mathcal{D}(\bar s,\bar y;t,x)] \le u(s,y) - u (\bar s, \bar y).
$$
By Theorem~\ref{thm:minimal-action}, there exists a test function
$\psi$ such that $\psi\ge \dz(\cdot,\cdot ;t,x)$ on a ball $B(\bar
Q,r)$ with equality at $\bar Q = (\bar s,\bar y)$.  Hence, we can
rewrite the previous inequality by replacing $\mathcal{D}$ with
$\psi$. We then obtain that $(s,y) \mapsto \varphi (s) - \psi(s,y)$ is
a test function touching $u$ at $(\bar s, \bar y)$ from below. Since
$u$ is a super-solution of \eqref{eq:main}, we have in the cases $N\ge
2$ or $N=1$ and $\bar y\not= 0$
$$
\varphi'(\bar s) \ge \psi_s (\bar s,\bar y) - H(\bar y,-\partial_y
\psi(\bar s,\bar y))\ge 0
$$
and in the case $N=1$ and $\bar y=0$
$$
\varphi'(\bar s) \ge \psi_s (\bar s,\bar y) - H_1^-(-\partial_y
\psi(\bar s,\bar y))\ge \psi_s (\bar s,\bar y) - H_1(-\partial_y
\psi(\bar s,\bar y))\ge 0
$$
where we used the properties of the function $\psi$ given by
Theorem~\ref{thm:minimal-action}. \medskip

\noindent \textbf{Step 3: conclusion.}
Let us define for $(t,x)\in J_T$ the following kind of lower
semi-continuous envelope (for the past in time)
$$
\underline{u}(t,x)=\liminf\{ u(t_n,x_n):(t_n,x_n)\to (t,x), t_n< t \}.
$$
Let us notice that we have
\begin{equation}\label{eq::r71}
\underline{u}_* = u_*=u \quad \mbox{on}\quad J_T.
\end{equation}
Given a point $(t,x)\in J_T$, let us consider a sequence $(t_n,x_n)
\to (t,x)$ such that
$$
\underline{u}(t,x)=\lim_{n\to +\infty} u(t_n,x_n).
$$
Using Lemma~\ref{lem:am}, we have for any $s<t_n<t$
$$
U_{t,x_n}(s) \le U_{t,x_n}(t_n)\le u(t_n,x_n) + \dz(t_n,x_n,t,x_n) \to
\underline{u}(t,x).
$$
  Therefore from the lower
semi-continuity of $U$, we get
$$
U_{t,x}(s)\le \underline{u}(t,x).
$$
Again from the lower semi-continuity of the map $(t,x)\mapsto
U_{t,x}(s)$, we get passing to the lower semi-continuous envelopes in
$(t,x)$:
$$
U_{t,x}(s)\le \underline{u}_*(t,x)=u(t,x)
$$
where we have used (\ref{eq::r71}). This shows (\ref{eq:superopt}) for
$0<s<t$.  This is still true for $s=t$ by definition of $\dz$.  The
proof is now complete.
\end{proof}

\subsection{Comparison with sub solutions}

\begin{proposition}[Comparison with
    sub solutions] \label{prop:subopt} Let $u\colon J_T \to \R$ be a
  sub-solution of \eqref{eq:main}-(\ref{eq:ci}) on $J_T$, such that
  there exists $\sigma >0$ such that for all $(t,x) \in J_T$,
\begin{equation}\label{eq:subopt-cond}
u(t,x) \le \sigma (1 + d(x,0)).
\end{equation}
Then we have $u \le u_{\mathrm{oc}}$ on $J_T$.
\end{proposition}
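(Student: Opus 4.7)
The plan is to dualize the super-optimality argument of Proposition~\ref{prop:superopt}, this time using the ``forward'' test function $\phi$ from the first half of Theorem~\ref{thm:minimal-action} against the sub-solution property of $u$. Fix $(s_0,y_0)\in(0,T)\times J$ and, for $t\in[s_0,T)$, set
\[
M(t) := \sup_{x\in J}\bigl\{u(t,x)-\mathcal{D}(s_0,y_0;t,x)\bigr\},
\]
with the convention $M(s_0)=u(s_0,y_0)$, coming from the fact that $\mathcal{D}(s_0,y_0;s_0,x)=+\infty$ for $x\neq y_0$. The growth bound \eqref{eq:subopt-cond} together with the quadratic lower bound of Lemma~\ref{lem:am} ensures that $u(t,x)-\mathcal{D}(s_0,y_0;t,x)\to -\infty$ as $d(y_0,x)\to\infty$, so the supremum is attained at some bounded $\bar x(t)\in J$. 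Upper semi-continuity of $u$ and lower semi-continuity of $\mathcal{D}$ on $\{s>0\}$ then yield that $M$ is upper semi-continuous, and the extension to $t=s_0$ is also upper semi-continuous because of the same blow-up of $\mathcal{D}$ off the diagonal.

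The heart of the proof is to show that $M$ is non-increasing on $[s_0,T)$, which I would establish via the viscosity characterisation. Let $\chi\in C^1$ touch $M$ from above at $\bar t>s_0$ and let $\bar x=\bar x(\bar t)$ be a maximiser. By Theorem~\ref{thm:minimal-action} there exists $\phi\in C^1_*(J_T)$ such that $\phi\geq\mathcal{D}(s_0,y_0;\cdot,\cdot)$ on a ball around $(\bar t,\bar x)$, with equality at $(\bar t,\bar x)$, and $\phi_t+H(\bar x,\phi_x)\geq 0$ at that point. Set $\tilde\phi(t,x):=\chi(t)+\phi(t,x)$. Then $\tilde\phi\in C^1_*(J_T)$, and locally
\[
u(t,x) \;\le\; M(t)+\mathcal{D}(s_0,y_0;t,x) \;\le\; \chi(t)+\phi(t,x) \;=\; \tilde\phi(t,x),
\]
with equality at $(\bar t,\bar x)$. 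The sub-solution inequality of Definition~\ref{defi:main} applied to $\tilde\phi$ thus reads $\chi'(\bar t)+\phi_t+H(\bar x,\phi_x)\leq 0$ at $(\bar t,\bar x)$, treating in one stroke both the case $\bar x\in J_i^*$ and $\bar x=0$ (for which $H(0,p)=\max_i H_i^-(p_i)$). Subtracting the inequality on $\phi$ gives $\chi'(\bar t)\leq 0$, as required.

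With $M$ non-increasing, $M(t_0)\leq M(s_0)=u(s_0,y_0)$, hence for every $(t_0,x_0)\in J_T$ with $t_0>s_0$,
\[
u(t_0,x_0) \;\le\; u(s_0,y_0)+\mathcal{D}(s_0,y_0;t_0,x_0).
\]
Letting $s_0\to 0^+$, using the upper semi-continuity of $u$ together with $u(0,\cdot)\leq u_0$ from the sub-solution initial condition, and the continuity of $s\mapsto\mathcal{D}(s,y_0;t_0,x_0)$ up to $s=0$ (obtained by prepending an idle arc at $y_0$ to an admissible trajectory, which costs at most $O(s_0)$ by Lemma~\ref{lem:am}), I arrive at $u(t_0,x_0)\leq u_0(y_0)+\mathcal{D}(0,y_0;t_0,x_0)$. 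Taking the infimum over $y_0\in J$ and invoking the representation \eqref{eq::r76} of $u_{\mathrm{oc}}$ concludes the proof.

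The main obstacle is the monotonicity step: one must produce a genuine $C^1_*$ test function for $u$ despite the non-smoothness of $\mathcal{D}$ at and across the junction, and ensure that the resulting inequality lines up with the sub-solution condition even when the maximiser $\bar x$ sits at the singular point $x=0$. Both difficulties are handled precisely by the uniform formulation of Theorem~\ref{thm:minimal-action}, which is exactly why that result is the technical core of the paper. A secondary but minor point is the continuity of $\mathcal{D}$ as $s_0\to 0^+$, not covered by the statement of Theorem~\ref{thm:minimal-action} and needing a brief separate argument.
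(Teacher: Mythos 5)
Your argument is correct in substance but follows a genuinely different route from the paper's. The paper proves Proposition~\ref{prop:subopt} by a global doubling-type argument: it studies the supremum of $u(t,x)-u_0(y)-\mathcal{D}(0,y;t,x)-\alpha d(x,0)^2-\eta/(T-t)$ over $(t,x,y)$, uses Lemmas~\ref{lem::r46} and~\ref{lem::r72} to get finiteness and to force the maximum point to satisfy $\bar t>0$, and reaches a contradiction by confronting the sub-solution inequality for $(t,x)\mapsto\phi(t,x)+\alpha d(x,0)^2+\eta/(T-t)$ with \eqref{eq::r60}; this last step requires the gradient bound \eqref{eq::r73} and the local Lipschitz continuity of the $H_i$'s to absorb the $2\alpha\bar x_i$ perturbation. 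You instead dualize Proposition~\ref{prop:superopt}: for fixed $(s_0,y_0)$ you show that $t\mapsto\sup_x\{u(t,x)-\mathcal{D}(s_0,y_0;t,x)\}$ is non-increasing, the key step being exactly symmetric to Step~2 of the paper's super-optimality proof, with \eqref{eq::r60} playing the role of \eqref{eq::r61} (and being even simpler, since it needs no case distinction $N=1$ versus $N\ge2$). Your version buys some economy: the coercivity of $\mathcal{D}$ from Lemma~\ref{lem:am} replaces the $\alpha$-penalization, so the final Lipschitz-in-$p$ argument and \eqref{eq::r73} are not needed; the monotonicity in $t$ replaces the $\eta$-penalization; and Lemma~\ref{lem::r72} is not needed at all. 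It also makes the two halves of the comparison principle visibly symmetric. What it costs is the extra limit $s_0\to0^+$, which the paper's formulation avoids by putting $s=0$ directly into the penalization.

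Two small repairs are needed. First, your justification of $\mathcal{D}(s_0,y_0;t_0,x_0)\to\mathcal{D}(0,y_0;t_0,x_0)$ by prepending an idle arc gives the inequality in the wrong direction: it yields $\mathcal{D}(0,y_0;t_0,x_0)\le\mathcal{D}(s_0,y_0;t_0,x_0)+Cs_0$, i.e.\ a lower bound on $\mathcal{D}(s_0,\cdot)$, whereas your final step needs $\limsup_{s_0\to0}\mathcal{D}(s_0,y_0;t_0,x_0)\le\mathcal{D}(0,y_0;t_0,x_0)$. The correct tool is the scaling identity of Lemma~\ref{lem:reduced} combined with the continuity of $\mathcal{D}_0$ on $J^2$; alternatively, take $s_0=0$ from the outset and control $\limsup_{t\downarrow0}M(t)\le u_0(y_0)$ by the same localisation of the maximiser that you already use at $t=s_0$. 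Second, the passage from ``$M'\le0$ in the viscosity sense'' to ``$M$ non-increasing'' for a merely upper semi-continuous $M$ deserves the usual one-line barrier argument (maximise $M(t)-\varepsilon/(t_2-t)$ over $[t_1,t_2)$ and let $\varepsilon\to0$); it is standard but should be stated or cited.
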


In order to prove Proposition~\ref{prop:subopt}, we first state and prove
two lemmas.
\begin{lemma}\label{lem::r46} Assume
  (A0)-(A1). Then the function $u_{\mathrm{oc}}$ defined in (\ref{eq::r17})
  satisfies
$$
|u_{\mathrm{oc}}(t,x)-u_0(x)|\le Ct.
$$
\end{lemma}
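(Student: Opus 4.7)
The strategy is to prove the two one-sided bounds $u_{\mathrm{oc}}(t,x) - u_0(x) \le Ct$ and $u_0(x) - u_{\mathrm{oc}}(t,x) \le Ct$ separately; each follows by inserting a suitable competitor into the infimum defining $u_{\mathrm{oc}}$ and applying one of the elementary estimates already available in the paper.

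\textbf{Upper bound.} For the first inequality I will simply test $u_{\mathrm{oc}}(t,x)$ against the pair $(y,X)$ with $y=x$ and the stationary trajectory $X(\tau)\equiv x$, which lies in $\mathcal{A}(0,x;t,x)$ and satisfies $\dot X \equiv 0$. Its cost equals
$$
u_0(x) + \int_0^t L(x,0)\,d\tau = u_0(x) + t\,L(x,0).
$$
From \eqref{eq::r15} and \eqref{eq::r26}, $L(x,0) = L_{i(x)}(0)$ if $x\in J_i^*$ and $L(0,0) = L_0(0) = \min_j L_j(0)$; in either case $L(x,0) \le M := \max_{i\in I_N} L_i(0)$, which is finite under (A1) because each $L_i$ is of class $C^2(\R)$. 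This already gives $u_{\mathrm{oc}}(t,x) \le u_0(x) + Mt$.

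\textbf{Lower bound.} For the reverse inequality I will combine the Lipschitz continuity of $u_0$ with the quadratic action bound of Lemma~\ref{lem:am}. Denote by $K$ the Lipschitz constant of $u_0$ with respect to the geodesic distance $d$ on $J$; assumption (A0) yields Lipschitz continuity on each branch, and the bound $|u_0(x)-u_0(y)|\le K(|x|+|y|) = Kd(x,y)$ for points lying on different branches follows by inserting the junction point. Then for every $y\in J$ and every admissible trajectory $X\in\mathcal{A}(0,y;t,x)$,
$$
u_0(y) + \int_0^t L(X,\dot X)\,d\tau \;\ge\; u_0(x) - K\,d(y,x) + \frac{\gamma}{4t}\,d(y,x)^2 - C_0\,t.
$$
The quadratic $r\mapsto -Kr + \frac{\gamma}{4t}r^2$ is minimized at $r = 2Kt/\gamma$ with minimal value $-K^2 t/\gamma$, so taking the infimum over $y$ and $X$ produces
$$
u_{\mathrm{oc}}(t,x) \;\ge\; u_0(x) - \left(\frac{K^2}{\gamma} + C_0\right) t.
$$

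\textbf{Conclusion.} Combining the two bounds yields the statement with $C := \max\bigl(M,\, K^2/\gamma + C_0\bigr)$. There is no genuine obstacle here: the only nontrivial input is the quadratic action lower bound of Lemma~\ref{lem:am}, which has already been established, and the rest is a one-line competitor argument together with a scalar minimization.
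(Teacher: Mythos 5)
Your proof is correct and follows essentially the same route as the paper's: the upper bound via the stationary competitor $X\equiv x$ (the paper phrases this as $\dz(0,x;t,x)\le Mt$), and the lower bound by combining the quadratic action estimate of Lemma~\ref{lem:am} with the Lipschitz continuity of $u_0$ and minimizing the resulting scalar quadratic, which yields the same constant $C_0+L_{u_0}^2/\gamma$ as the paper's $C_2$.
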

\begin{proof}[Proof of Lemma~\ref{lem::r46}.]
  We first get a bound from below.  Using (\ref{eq::r41}), we deduce
  (denoting by $L_{u_0}$ the Lipschitz constant for $u_0$):
$$
\begin{array}{ll}
u_0(y)+ \dz(s,y;t,x) & \ge u_0(x)+ \frac{\gamma}{4t} (d(y,x))^2  -C_0 t -L_{u_0}d(y,x)\\
\\
&\ge u_0(x) - C_2t 
\end{array}
$$
with
$$
-C_2 = \inf_{a\in [0,+\infty)} \left\{\frac{\gamma}{4}a^2  -C_0
  -L_{u_0}a \right\}> -\infty.
$$
This implies that
$$
u_{\mathrm{oc}}(x)\ge u_0(x) - C_2t.
$$

We next get a bound from above.  We have
$$
u_{\mathrm{oc}}(x)\le u_0(x)+ \dz(0,x;t,x)\le u_0(x) + Mt
$$
with
$$
M=\sup_{i\in I_N} L_i(0).
$$
This ends the proof of the lemma.
\end{proof}
\begin{lemma}\label{lem::r72}
  Assume (A0)-(A1). Let $u: [0,T)\times J\to \R$ be a sub-solution of
  \eqref{eq:main}-(\ref{eq:ci}) on $J_T$, satisfying
  \eqref{eq:subopt-cond}. Then there exists a constant $C>0$ such that
\begin{equation}\label{eq:superlin+}
  u(t,x) \le u_0 (x) + C t \quad \mbox{for all}\quad  (t,x)\in J_T.
\end{equation}
\end{lemma}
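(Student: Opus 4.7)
The strategy is a doubling-of-variables argument based on a quadratic penalty $d(x,y)^2/(2\eps)$ that couples $u(t,x)$ to $u_0(y)$. Fix $(t_0,x_0)\in J_T$ and $T'\in(t_0,T)$. For parameters $\eps,\alpha\in(0,1)$ and a constant $C>0$ to be chosen, define on $[0,T')\times J\times J$
\[
\Phi(t,x,y)=u(t,x)-u_0(y)-\frac{d(x,y)^2}{2\eps}-Ct-\alpha\bigl(d(x,x_0)^2+d(y,x_0)^2\bigr)-\frac{\alpha}{T'-t}.
\]
Combining the growth bound \eqref{eq:subopt-cond} with the Lipschitz control on $u_0$, the $\alpha$-quadratic terms dominate the linear growth of $u(t,x)-u_0(y)$, while $\alpha/(T'-t)$ pushes $t$ away from $T'$; since $u$ is upper semi-continuous, the supremum $M$ of $\Phi$ is attained at some $(t_\eps,x_\eps,y_\eps)$.

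Two cases are distinguished. If $t_\eps=0$, then $u(0,x_\eps)\le u_0(x_\eps)$ together with the Lipschitz property of $u_0$ gives directly $M\le L_{u_0}\,d(x_\eps,y_\eps)-d(x_\eps,y_\eps)^2/(2\eps)\le \eps L_{u_0}^2/2$, where $L_{u_0}$ denotes the Lipschitz constant of $u_0$. If instead $t_\eps>0$, I consider the test function
\[
\phi(t,x)=u_0(y_\eps)+\frac{d(x,y_\eps)^2}{2\eps}+Ct+\alpha d(x,x_0)^2+\alpha d(y_\eps,x_0)^2+\frac{\alpha}{T'-t}.
\]
Since the squared distance to any fixed point of $J$ is a smooth function of the branch coordinate on each $J_i$ and is continuous across the origin, this $\phi$ lies in $C^1_*(J_{T'})$, and $u-\phi$ attains its maximum at $(t_\eps,x_\eps)$. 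Applying Definition~\ref{defi:main} at this point yields
\[
C+\frac{\alpha}{(T'-t_\eps)^2}+H_i\bigl(\phi_x(t_\eps,x_\eps)\bigr)\le 0 \quad \text{if } x_\eps\in J_i^*,
\]
and
\[
C+\frac{\alpha}{(T'-t_\eps)^2}+\max_{i\in I_N}H_i^-\bigl(\phi_x^i(t_\eps,0)\bigr)\le 0 \quad \text{if } x_\eps=0.
\]

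The core step is to bound $|\phi_x(t_\eps,x_\eps)|$ (respectively each $|\phi_x^i(t_\eps,0)|$) by a constant independent of $\eps$ and $\alpha$. Comparing $\Phi(t_\eps,x_\eps,y_\eps)$ with $\Phi(t_\eps,x_\eps,x_\eps)$ and using the Lipschitz bound on $u_0$ controls $d(x_\eps,y_\eps)/\eps$; comparing $\Phi(t_\eps,x_\eps,y_\eps)$ with $\Phi(t_0,x_0,x_0)$ and using the linear growth of $u$ and $u_0$ produces a quadratic inequality of the form $\alpha r^2\le A+Br$ for $r\in\{d(x_\eps,x_0),d(y_\eps,x_0)\}$, with $A,B$ independent of $\eps$ and $\alpha$; this forces $\alpha d(x_\eps,x_0)$ and $\alpha d(y_\eps,x_0)$ to remain bounded uniformly. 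The components of $\phi_x$ at $(t_\eps,x_\eps)$ are built from $\pm d(x_\eps,y_\eps)/\eps$ (contributions of the first penalty, on a single branch or across the origin) and $\pm 2\alpha d(x_\eps,x_0)$, and are therefore uniformly bounded; by continuity of the Hamiltonians, $H_i(\phi_x)$ and $H_i^-(\phi_x^i)$ stay in a bounded set. Choosing $C$ strictly larger than the supremum of these values makes both sub-solution inequalities impossible, excluding the case $t_\eps>0$. Consequently $M\le\eps L_{u_0}^2/2$, and the inequality $u(t_0,x_0)-u_0(x_0)-Ct_0-\alpha/(T'-t_0)\le M$, together with the limits $\eps\to 0$ and $\alpha\to 0$, yields the lemma. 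The main obstacle is this gradient estimate at the junction point $x_\eps=0$: the test function is only $C^1_*$, and all $N$ derivatives $\phi_x^i$ must be simultaneously controlled by a constant determined solely by $L_{u_0}$, $\sigma$ and the Hamiltonians, via the localized comparison of $\Phi$ in each branch direction.
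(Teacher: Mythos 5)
Your argument is workable, but it is a genuinely different and much heavier route than the paper's. The paper also reduces to a penalized supremum $N_{\eta,\alpha}=\sup\{u(t,x)-u_0^\eta(x)-Ct-\alpha d(x,0)^2-\eta/(T-t)\}$ (regularizing $u_0$ instead of doubling variables), but then it never needs any gradient estimate: by the very definition of the Legendre transform, $H_i(p)\ge p\cdot 0-L_i(0)=-L_i(0)$ and likewise $H_i^-(p)\ge -L_i(0)$, so $H(x,p)\ge -\max_j L_j(0)$ \emph{for every} $p$, both on the branches and at the junction. The viscosity inequality at an interior maximum therefore reads $C-\max_j L_j(0)\le \phi_t+H(x,\phi_x)\le 0$ directly, and $C=\max_j|L_j(0)|$ already gives the contradiction. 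Your doubling-of-variables scheme with the $d(x,y)^2/(2\eps)$ penalty and the a priori bounds on $d(x_\eps,y_\eps)/\eps$ and $\alpha d(x_\eps,x_0)$ is the standard comparison-principle machinery; it buys nothing here and creates the one delicate point you should be more careful about: the constant $A$ in your inequality $\alpha r^2\le A+Br$ contains the term $Ct_0\le CT'$ (and $|u(t_0,x_0)|$), so the ``uniform'' bound on $\alpha d(x_\eps,x_0)$, hence on $|\phi_x|$, hence on the set over which you must dominate $-H_i$, depends on the constant $C$ you are trying to choose. This circularity is resolvable — either by observing that you may assume $M>0$ (otherwise the conclusion already holds), in which case $0<M\le u(t_\eps,x_\eps)-u_0(y_\eps)-\alpha(\cdots)-d^2/(2\eps)-Ct_\eps$ lets you drop $Ct_\eps\ge0$ and obtain bounds free of $C$; or by fixing $C$ from the $\alpha\to0$ limit of the bounds and then taking $\alpha$ small; or, most simply, by invoking the same global lower bound $H_i\ge -L_i(0)$ the paper uses, which makes the gradient control irrelevant — but as written the order of quantifiers is not justified. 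I would also note that the paper's regularization of $u_0$ spares you the question of whether $d(\cdot,y_\eps)^2$ and $d(\cdot,x_0)^2$ are admissible $C^1_*$ test functions (they are, but it must be checked branch by branch, as you partially do).
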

\begin{proof}[Proof of Lemma~\ref{lem::r72}.]
Using the Lipschitz regularity of $u_0$, we can easily consider a
smooth approximation $u_0^\eta$ of $u_0$ such that $u_0^\eta \ge u_0$
and $|u_0^\eta-u_0|_{L^\infty(J)} \to 0$ as $\eta\to 0$.  Then
consider the following supremum for $\eta,\alpha >0$
$$
N_{\eta,\alpha}= \sup_{(t,x) \in [0,T)\times J} \{ u(t,x) - u_0^\eta
(x) -Ct- \alpha d(x,0)^2 - \frac{\eta}{T-t} \}.
$$
We claim that $N_{\eta,\alpha} \le 0$ for some $C$ large enough
independent on $\eta,\alpha>0$ small enough.  The lemma will be
obtained by letting $\alpha$ and $\eta$ go to $0$. We argue by
contradiction and assume that $N_{\eta,\alpha} >0$. Thanks to
\eqref{eq:subopt-cond}, the supremum $N_{\eta,\alpha}$ is attained for
some $(t,x) \in [0,T)\times J$. If $t=0$, we have $N_{\eta,\alpha} \le
0$. Therefore $t>0$ and we can use the fact that $u$ is a sub-solution
to obtain for $x=x_ie_i$
$$
\frac{\eta}{T^2} +C -\max_{j\in I_N} L_j (0)  \le 
\frac{\eta}{T^2} +C + H(x,\partial_x u_0^\eta (x) + 2 \alpha x_i) \le 0
$$
where we have used assumption (A1) to estimate $H$ from below.  Notice
that we have also made use of a slight abuse of notation in the case
$x =0$.  Choosing $C=\max_{j\in I_N} |L_j (0)|$ allows us to conclude
to a contradiction.  This ends the proof of Lemma~\ref{lem::r72}.
\end{proof}
We now turn to the proof of Proposition~\ref{prop:subopt}.
\begin{proof}[Proof of Proposition~\ref{prop:subopt}.]
The proof proceeds in several steps. \medskip

\noindent \textbf{Step 1: preliminaries.}
Let us consider
$$
M=\sup_{(t,x)\in [0,T)\times J} \left\{u(t,x)-u_{\mathrm{oc}}(t,x)\right\}.
$$
From Lemmas~\ref{lem::r46} and \ref{lem::r72}, we deduce that we have
$M\le 2CT <+\infty$. We want to prove that $M\le 0$.

To this end, we perform the usual corrections considering
the following supremum for $\eta,\alpha>0$
$$
M_{\eta,\alpha}= \sup_{(t,x)\in [0,T)\times J} \left\{ u (t,x) -
  u_{\mathrm{oc}}(t,x)- \alpha d(x,0)^2 - \frac{\eta}{T-t} \right\}.
$$

As it is proved classically, we also have that $M_{\eta,\alpha} \to
M_{\eta,0}$ as $\alpha \to 0$ where
$$
M_{\eta,0}= \sup_{(t,x)\in [0,T)\times J} \left\{ u (t,x) - u_{\mathrm{oc}}(t,x) 
 - \frac{\eta}{T-t}\right\}. 
$$
We argue by contradiction by assuming that $M>0$ and then
$M_{\eta,0}\ge M/2 >0$ for $\eta>0$ small enough and fixed for the
rest of the proof. \medskip

\noindent \textbf{Step 2: reduction to $\mathbf{\bar t>0}$.}  Notice
that the supremum $M_{\eta,\alpha}$ is achieved for points $(\bar
t,\bar x) \in [0,T) \times J$. Using again Lemmas~\ref{lem::r46} and
\ref{lem::r72}, we also deduce that
$$
M/2 <M_{\eta,0} \le M_{\eta,\alpha} + o_\alpha(1) \le 2C \bar t
$$
and hence $\bar t \ge \frac{M}{4 C}>0$ for $\alpha$ small enough.
\medskip

\noindent \textbf{Step 3: a priori bounds.}
Using the argument of Step~1 of the proof of Proposition
\ref{prop:superopt}, we see that there exists $\bar y\in J$ such that
$$
u_{\mathrm{oc}}(\bar t,\bar x)= u_0(\bar y) + \dz(0,\bar y; \bar t,\bar x).
$$
Therefore we can rewrite $M_{\eta,\alpha}$ as
$$
M_{\eta,\alpha}= \sup_{0\le t<T,x,y \in J} \{ u (t,x) - u_0 (y) - \mathcal{D} (0,y;t,x) 
- \alpha d(x,0)^2 - \frac{\eta}{T-t} \}. 
$$
and the supremum is achieved for $(\bar t,\bar x,\bar y)\in
(0,T)\times J^2$.  Notice that this supremum looks like the classical
one for proving the comparison principle for viscosity solutions, with
the usual penalization term $(y-x)^2/\varepsilon$ replaced here by the
function $\mathcal{D} (0,y;t,x)$.

In view of the bound (\ref{eq::r41}) from below on $\dz$ and
\eqref{eq:superlin+}, we derive from $M_{\eta,\alpha}>0$ that
$$
\frac{\eta}{T-\bar t}+ \alpha d (\bar x,0)^2 +
\frac{\gamma}{4\bar t} d(\bar y,\bar x)^2 \le C_0 \bar t + C\bar t + L_{u_0} d(\bar y,\bar
x)
$$
where $L_{u_0}$ denotes the Lipschitz constant of $u_0$. We conclude
that there exists ${C}_T$ such that
\begin{equation}\label{eq:penal}
\alpha  d(\bar x,0)^2 \le {C}_T \quad \text{ and } \quad d(\bar y,\bar x) \le {C}_T
\end{equation}
where ${C}_T$ depends on $T$,  $C_0$,$ C$, $L_{u_0}$ and $\gamma$.
\medskip

\noindent \textbf{Step 4: getting the viscosity inequality.}
Since $\bar t >0$, we have in particular that
\begin{multline*}
  u(t,x) - \left( \mathcal{D}(0,\bar y;t,x) + \alpha d(x,0)^2
    +\frac{\eta}{T-t} \right) \\
  \le u(\bar t,\bar x) - \left( \mathcal{D}(0,\bar y;\bar t,\bar x) +
    \alpha d(\bar x,0)^2 +\frac{\eta}{T-\bar t}\right).
\end{multline*}
By Theorem~\ref{thm:minimal-action}, there exists a test function
$\phi$ such that $\phi\ge \dz(0,\bar y; \cdot,\cdot)$ on a ball
$B(\bar P,r)$ with equality at $\bar P=(\bar t,\bar x)$.  Hence, we
can rewrite the previous inequality by replacing $\mathcal{D}$ with
$\phi$. We then obtain that $(t,x) \mapsto \phi(t,x) + \alpha d(x,0)^2
+ \frac{\eta}{T-t}$ touches $u$ from above at $(\bar t, \bar x)$ with
$\bar t>0$.  We use next that $u$ is a sub-solution of \eqref{eq:main}
and get for $\bar x =\bar x_i e_i$
$$
\frac{\eta}{T^2} + \phi_t (\bar t,\bar x) + H (\bar x,\phi_{x} (\bar
t,\bar x) + 2\alpha \bar x_i) \le 0
$$
where we have made use of a slight abuse of notation in the case $\bar
x =0$.  On the other hand, we have
$$
\phi_t (\bar t,\bar x) 
+ H (\bar x,\phi_{x} (\bar t,\bar x)) \ge 0
$$
therefore
$$
\frac{\eta}{T^2} + H (\bar x,\phi_{x} (\bar t,\bar x) + 2\alpha \bar
x_i) - H (\bar x,\phi_{x} (\bar t,\bar x)) \le 0.
$$
On the one hand, from (\ref{eq:penal}), we have $0\le \alpha x_i \le
\sqrt{\alpha C_T}$.  On the other hand, we can use (\ref{eq:penal})
and (\ref{eq::r73}) in order to conclude that
$$
|\phi_x (\bar t,\bar x)| \le \bar C
$$
for some constant $\bar C$ which does not depend on $\alpha$.  We can
now use the fact that the Hamiltonians are locally Lipschitz
continuous in order to get the desired contradiction for $\alpha$
small enough.  This ends the proof of the Proposition.
\end{proof}

\subsection{Proof of the main results}

In this subsection, we prove the main results announced in the
introduction.

\begin{proof}[Proof of Theorem~\ref{th::r1}.]
  We simply apply Propositions ~\ref{prop:superopt} and \ref{prop:subopt}
  and get $u\le u_{\mathrm{oc}}\le v$ which implies the result.
\end{proof}
In order to prove Theorem~\ref{th::r2}, we should first prove that solutions are
Lipschitz continuous.
\begin{lemma} \label{pro::r74} Assume (A0)-(A1).  Let $u$ be a solution of
  \eqref{eq:main}-\eqref{eq:ci} on $J_T$.  Then $u$ is Lipschitz continuous
  with respect to $(t,x)$ on $J_T$.
\end{lemma}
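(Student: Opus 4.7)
The plan is to first identify the solution $u$ with the value function $u_{\mathrm{oc}}$ and then deduce Lipschitz continuity in $(t,x)$ through comparison arguments combined with estimates on the minimal action. Applying Propositions~\ref{prop:superopt} and \ref{prop:subopt} to $u$ (the linear growth needed to invoke these results following from the built-in estimate $|u(t,x)-u_0(x)|\le C_T$ in the class of solutions under consideration, together with the Lipschitz continuity of $u_0$), I obtain $u=u_{\mathrm{oc}}$ on $J_T$. Lemma~\ref{lem::r46} then yields the basic bound $|u(t,x)-u_0(x)|\le Ct$ on $J_T$, where $C$ depends only on $T$, on the constants in (A1) and on the Lipschitz constant of $u_0$.

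For the time Lipschitz bound, fix $h\in(0,T)$ and set $\tilde u(t,x):=u(t+h,x)$. Since \eqref{eq:main} is autonomous in time, $\tilde u$ solves \eqref{eq:main} on $J_{T-h}$. The estimate from the previous step gives $|\tilde u(0,\cdot)-u_0|\le Ch$, so $\tilde u-Ch$ is a sub-solution and $\tilde u+Ch$ is a super-solution of \eqref{eq:main}-\eqref{eq:ci} on $J_{T-h}$. Invoking the comparison principle (Theorem~\ref{th::r1}) on $J_{T-h}$ gives $\tilde u-Ch\le u\le \tilde u+Ch$, i.e.\ $|u(t+h,x)-u(t,x)|\le Ch$.

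For the spatial Lipschitz bound, fix $t\in(0,T)$ and $x,x'\in J$. In the regime $d(x,x')\le t$, set $\tau:=d(x,x')$ and construct a unit-speed admissible trajectory $X\in\mathcal{A}(t-\tau,x';t,x)$: either a single straight segment along the common branch $J_i$ containing both $x$ and $x'$, or, if $x$ and $x'$ lie on distinct branches, a concatenation of two unit-speed segments meeting at the junction point at an intermediate time. In either case, the cost of the trajectory is bounded by $C'd(x,x')$ for some constant $C'$ depending only on the $L_i$. Combining with the Dynamic Programming Principle satisfied by $u=u_{\mathrm{oc}}$ and the time Lipschitz bound from the previous step gives
\[
u(t,x)\le u(t-\tau,x')+C'd(x,x')\le u(t,x')+(C+C')d(x,x'),
\]
and exchanging $x$ and $x'$ yields the matching lower bound. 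When $d(x,x')>t$, the inequality $|u(t,\cdot)-u_0|\le Ct\le Cd(x,x')$ together with the Lipschitz continuity of $u_0$ produces $|u(t,x)-u(t,x')|\le(2C+L_{u_0})d(x,x')$ directly.

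The only technical point requiring verification is the admissibility of the concatenated trajectory at the junction point when $x$ and $x'$ lie on distinct branches: the trajectory passes through the origin at a single instant, at which the constraint $\dot X\in A(X)$ need not hold (admissibility being required only almost everywhere in \eqref{eq::r16}), so no obstruction arises. All other ingredients—the identification $u=u_{\mathrm{oc}}$, the comparison principle, the DPP and the basic bound on $u-u_0$—are already established in preceding sections.
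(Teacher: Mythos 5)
Your proof is correct, but it takes a genuinely different route from the paper's in its second half. The paper never identifies $u$ with $u_{\mathrm{oc}}$ inside this lemma: it uses the a priori bound $|u-u_0|\le Ct$ (from Perron's construction), deduces continuity of $u$ from the comparison principle ($u^*\le u_*$), obtains the time-Lipschitz bound by comparing $u$ with the shifted function $v(t,x)=u(t+h,x)-\sup_x\left(u(h,x)-u_0(x)\right)$ --- which is essentially your step --- and then gets the \emph{space} regularity purely from the equation: once $|u_t|\le C$, the viscosity inequality gives $H_i(u_x)\le C$ on each open branch, and the coercivity of $H_i$ in (A1) yields $|u_x|\le\tilde C$, hence spatial Lipschitz continuity. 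You instead prove the spatial bound on the optimal-control side, by feeding a unit-speed trajectory (possibly concatenated at the junction) into the dynamic programming inequality $u_{\mathrm{oc}}(t,x)\le u_{\mathrm{oc}}(t-\tau,x')+\mathcal{D}(t-\tau,x';t,x)$ with $\tau=d(x,x')$, and treating the regime $d(x,x')>t$ separately via $|u-u_0|\le Ct$. Both routes are sound. The paper's is shorter and does not require the representation $u=u_{\mathrm{oc}}$; yours gives explicit constants and sidesteps the small amount of bookkeeping needed to pass from ``$|u_x|\le\tilde C$ in the viscosity sense on $J^*$'' to a genuine Lipschitz bound with respect to the geodesic distance $d$. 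Your observations that the velocity constraint need only hold a.e.\ (so crossing the junction at a single instant is harmless) and that the one-sided dynamic programming inequality follows from mere concatenation of trajectories (so the full DPP of the remark in Section 4 is not needed) are both correct. Note finally that, exactly like the paper, you implicitly restrict to solutions satisfying $|u-u_0|\le C_T$ (the class of Theorem~\ref{th::r2}), since the bare statement of the lemma does not by itself provide the growth hypotheses required by the comparison arguments.
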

\begin{proof}[Proof of Lemma~\ref{pro::r74}.]
  We first recall (see Lemma~\ref{lem::r18}) that (A1) implies (A1').
  We know that the solution $u=u^*$ given by Theorem~\ref{thm:perron}
  satisfies for some constant $C>0$ and all $(t,x) \in J_T$,
$$
|u(t,x) - u_0 (x) | \le Ct.
$$
From the comparison principle (Theorem~\ref{th::r1}), we deduce that
$u=u^*\le u_*$ and then the solution $u=u^*=u_*$ is continuous.

For $h_0>0$ small (with $h_0<T$), we now consider $h\in (0,h_0)$ and
$$
v(t,x) = u(t+h,x) - \sup_{x \in J} (u(h,x) - u_0(x)).
$$
This new function satisfies in particular $v(0,x) \le u_0 (x)$.
Therefore $v$ is a sub-solution of \eqref{eq:main}-\eqref{eq:ci} on
$J_{T-h_0}$.  We thus conclude from the comparison principle that
$v(t,x) \le u(t,x)$, which implies
$$
u(t+h,x) \le u(t,x) + Ch \quad \mbox{for all}\quad (t,x)\in J_{T-h_0}.
$$
Arguing similarly, we can prove that $u(t+h,x) \ge u(t,x) - Ch$.
Because $h_0$ can be chosen arbitrarily small, we conclude that $u$ is
Lipschitz continuous with respect to time on the whole $J_T$.

Since $u$ is a viscosity solution of \eqref{eq:main}, it satisfies in
particular (in the viscosity sense) for each $i\in I_N$:
$$
H_i (u_x) \le C \quad \mbox{on}\quad  (0,T)\times J_i^*.
$$
This implies that there exists a constant $\tilde{C}$ such that (in
the viscosity sense)
$$
|u_x | \le \tilde{C} \quad \mbox{on}\quad  (0,T)\times J^*.
$$
This implies that $u$ is Lipschitz continuous with respect to the space
variable. This ends the proof of the lemma.
\end{proof}
We now turn to the proof of Theorem~\ref{th::r2}. 
\begin{proof}[Proof of Theorem~\ref{th::r2}.]
  The uniqueness of the solution follows from Theorem~\ref{th::r1}.  The
  existence is obtained thanks to the optimal control interpretation
  ($u_{\mathrm{oc}}$ is a solution). The Lipschitz regularity was proved in
  Lemma~\ref{pro::r74} above.  The proof of Theorem~\ref{th::r2} is now complete.
\end{proof}
\begin{proof}[Proof of Theorem~\ref{th::r3}.]
  The fact that the solution is equal to $u_{\mathrm{oc}}$ follows
  from Propositions~\ref{prop:superopt} and \ref{prop:subopt}.  The
  representation formula~(\ref{eq::r77}) follows from (\ref{eq::r76}).
\end{proof}

\section{A complete study of the minimal action}
\label{sec:complete}

\subsection{Reduction of the study}

We start this section with the following remark: the analysis can be
reduced to the case $(s,t)=(0,1)$. Precisely, using the fact that the
Hamiltonian does not depend on time and is positively homogeneous with
respect to the state, the reader can check that a change of variables
in time yields the following
\begin{lemma}\label{lem:reduced}
 For all $x,y \in J$ and $s<t$, we have
\begin{equation}\label{eq::r40}
\dz(s,y;t,x)=(t-s)\dz\left(0,\frac{y}{t-s};1,\frac{x}{t-s}\right).
\end{equation}
\end{lemma}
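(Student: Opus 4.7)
The plan is to exhibit an explicit bijection between the sets of admissible trajectories $\mathcal{A}(s,y;t,x)$ and $\mathcal{A}\bigl(0,\tfrac{y}{t-s};1,\tfrac{x}{t-s}\bigr)$, and to show it rescales the action by the factor $(t-s)$. The whole argument rests on two structural facts: each branch $J_i$ is a positive cone (so $x\in J_i$ iff $\lambda x\in J_i$ for every $\lambda>0$), and the Lagrangian $L(x,P)$ defined in \eqref{eq::r15} depends on $x$ only through the index $i(x)$ of the branch containing $x$.

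For $X\in \mathcal{A}(s,y;t,x)$, define $\tilde X:[0,1]\to \R^2$ by
$$
\tilde X(\sigma) = \frac{1}{t-s}\,X\bigl(s+\sigma(t-s)\bigr),\qquad \sigma\in[0,1].
$$
Clearly $\tilde X(0)=y/(t-s)$ and $\tilde X(1)=x/(t-s)$. Because each $J_i$ is stable under positive dilations, $\tilde X(\sigma)\in J$ for all $\sigma$, and the branch index is preserved: $i(\tilde X(\sigma))=i(X(s+\sigma(t-s)))$. The chain rule gives $\dot{\tilde X}(\sigma)=\dot X(s+\sigma(t-s))$ a.e., and since the admissible control set $A(z)$ depends only on $i(z)$ (and equals the full union of rays at $0$), the inclusion $\dot X(\tau)\in A(X(\tau))$ transfers to $\dot{\tilde X}(\sigma)\in A(\tilde X(\sigma))$. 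The map $X\mapsto \tilde X$ is therefore a bijection $\mathcal{A}(s,y;t,x)\to \mathcal{A}\bigl(0,\tfrac{y}{t-s};1,\tfrac{x}{t-s}\bigr)$, its inverse being the analogous rescaling in the other direction.

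Next, compute the action. Since $L(z,P)=L_{i(z)}(p)$ for $P=pe_{i(z)}$ (with the convention $L(0,P)=L_0(p)$), and $i(\tilde X(\sigma))=i(X(s+\sigma(t-s)))$, we have
$$
L\bigl(\tilde X(\sigma),\dot{\tilde X}(\sigma)\bigr)=L\bigl(X(s+\sigma(t-s)),\dot X(s+\sigma(t-s))\bigr)\qquad\text{a.e. }\sigma\in(0,1).
$$
Changing variables $\tau=s+\sigma(t-s)$ yields
$$
\int_0^1 L\bigl(\tilde X(\sigma),\dot{\tilde X}(\sigma)\bigr)\,d\sigma = \frac{1}{t-s}\int_s^t L\bigl(X(\tau),\dot X(\tau)\bigr)\,d\tau.
$$

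Taking the infimum over $X\in\mathcal{A}(s,y;t,x)$ on the right and, equivalently via the bijection, over $\tilde X\in\mathcal{A}\bigl(0,\tfrac{y}{t-s};1,\tfrac{x}{t-s}\bigr)$ on the left, we obtain \eqref{eq::r40}. No step is genuinely hard; the only point to be careful about is the ``position-independence'' of $L$ on each branch, which is exactly where assumption (A1) on the structure of the Hamiltonians enters, and the fact that the branches are invariant cones, which is what makes the rescaling well-defined at the junction point.
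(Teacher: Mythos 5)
Your proof is correct and is exactly the change-of-variables argument the paper itself invokes (the paper only sketches it in one line, noting that the Lagrangian is independent of time and the branches are positively homogeneous, and leaves the verification to the reader); the key observation that the simultaneous rescaling of time and space leaves the velocity, hence the Lagrangian integrand, unchanged is spelled out properly. The only minor inaccuracy is attributing the position-independence of $L$ on each branch to assumption (A1): this is built into the definition \eqref{eq::r15} of $L$ rather than being a consequence of (A1), but this does not affect the argument.
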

This is the reason why we consider the reduced minimal action $\dzo \colon
J^2 \to \R$ defined by
$$
\dzo (y,x) = \dz(0,y;1,x).
$$
Thanks to the previous observation, it is enough to prove the
following theorem in order to get Theorem~\ref{thm:minimal-action}.
\begin{theorem}[Key equalities for $\mathbf{\dzo}$]\label{thm:minimal-action-0}
  Let us assume (A1). Then for all $y,x\in J$, the $\dzo(y,x)$ is
  finite. It is continuous in $J^2$ and for all $y,x \in J$, there exists a function
  $\varphi_0\in C^1_*(J^2)$ such that $\varphi_0 \ge \dzo$ on $J^2$,
  $\varphi_0 (y,x) = \dzo (y,x)$ and we have
\begin{align}
\label{eq:dzo-fwd-int}
\text{ if } x \neq 0\colon &(\varphi_0 - x \partial_x \varphi_0  - y \partial_y \varphi_0)
(y,x) + H(x,\partial_x \varphi_0 (y,x))=0 \\
\label{eq:dzo-fwd-bd}
\text{ if } x = 0\colon &
(\varphi_0 - x \partial_x \varphi_0  - y \partial_y \varphi_0)(y,0) 
+ \sup_{i\in I_N} H_i^-(\partial_x^i \varphi_0 (y,0))=0  
\end{align}
and if $y \neq 0$,
\begin{equation}
\label{eq:dzo-bwd-int}
(\varphi_0 - x \partial_x \varphi_0  - y \partial_y \varphi_0)
(y,x) + H(y,-\partial_y \varphi_0(y,x)) = 0
\end{equation} 
and if $y=0$, 
\begin{equation}
\label{eq:dzo-bwd-bd}
\left\{\begin{array}{rl}
 (\varphi_0 - x \partial_x \varphi_0- y \partial_y \varphi_0)(0,x) 
+ \sup_{j\in I_N}  H^-_j (-\partial_y^j \varphi_0(0,x)) = 0 &  \mbox{ if } N\ge 2,\\
 (\varphi_0 - x \partial_x \varphi_0- y \partial_y \varphi_0)(0,x) 
+ H_1(-\partial_y^j \varphi_0(0,x)) = 0 &  \mbox{ if } N=1.
\end{array}\right.
\end{equation}
Moreover, for all $R >0$, there exists $C_R>0$ such that for all $x,y \in J$, 
\begin{equation}\label{eq:grad-control}
d(y,x) \le R \Rightarrow |\partial_x \varphi_0 (y,x)|+|\partial_y \varphi_0 (y,x)|
\le C_R. 
\end{equation}
\end{theorem}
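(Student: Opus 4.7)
The strategy is to derive an explicit formula for the reduced minimal action $\dzo$ via a classification of optimal trajectories, and then to read off the test functions and the Hamilton--Jacobi equalities from that formula.

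\textbf{Step 1 (structure of optimal trajectories).} Fix $y \in J_j$, $x \in J_i$. Because each $L_k$ is strictly convex by (A1), any minimizer $X \in \mathcal{A}(0,y;1,x)$ is affine in the scalar coordinate on each branch it occupies and can only switch branches at $0$. By the definition of $L$ at the origin any subinterval spent at $0$ contributes the cost $L_0(0) = \min_k L_k(0)$, attained on the ``cheap'' branches $I_0$ of \eqref{eq::r26}. Consequently a minimizing trajectory is described by at most three consecutive time intervals: travel time $\tau_1\ge 0$ on $J_j$ from $y$ to $0$, dwell time $\tau_0\ge 0$ at $0$, and travel time $\tau_2=1-\tau_0-\tau_1\ge 0$ on $J_i$ from $0$ to $x$; when $j=i$ one must additionally compare with the direct affine trajectory on $J_i$ which never reaches $0$.

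\textbf{Step 2 (explicit formula and qualitative properties).} This reduces $\dzo^{ji}(y,x)$ to a finite-dimensional minimization
\[
\dzo^{ji}(y,x) \;=\; \min\Big\{\, F_{ji}(\tau_0,\tau_1;|y|,|x|)\,:\, \tau_0,\tau_1\ge 0,\ \tau_0+\tau_1\le 1\,\Big\} \wedge \mathbf{1}_{i=j}\, L_i(x_i-y_i),
\]
where $F_{ji}(\tau_0,\tau_1;a,b)=\tau_1 L_j(-a/\tau_1)+\tau_0 L_0(0)+(1-\tau_0-\tau_1)L_i(b/(1-\tau_0-\tau_1))$ (terms with a vanishing time factor are dropped). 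Coercivity of the $L_k$ (from (A1)) gives existence of a minimizer, hence finiteness. First order conditions in $(\tau_0,\tau_1)$, combined with the Legendre identity $L_k(q)-qL_k'(q)=-H_k(L_k'(q))$, split the problem into three regimes (no dwell at $0$; positive dwell at $0$; direct trip on a single branch) and yield an explicit piecewise formula for $\dzo^{ji}$. Joint continuity of $\dzo$ on $J^2$ follows, and the bound $d(y,x)\le R$ controls the velocity of the optimal trajectory on each visited branch, giving \eqref{eq:grad-control}.

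\textbf{Step 3 (test function).} At any $(y_0,x_0)$ extract from the explicit formula the slopes $p_0 := \partial_x \dzo^{ji}(y_0,x_0)$, $q_0 := -\partial_y \dzo^{ji}(y_0,x_0)$ in the active regime (using one-sided derivatives when $x_0$ or $y_0$ vanishes). One then checks that, for $C$ large enough,
\[
\varphi_0^{ji}(y,x) := \dzo(y_0,x_0) + p_0\cdot(x-x_0) - q_0\cdot(y-y_0) + C\bigl(|x-x_0|^2 + |y-y_0|^2\bigr)
\]
lies above $\dzo^{ji}$ locally; this is a joint semi-concavity statement which is verified regime by regime from the explicit formula. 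One then does the analogous construction on every other branch pair $(j',i')$ to get $\varphi_0^{j'i'} \ge \dzo^{j'i'}$ locally, glueing them continuously at the junction (permitted because $C^1_*(J^2)$ only requires smoothness on each rectangle $J_j\times J_i$) to obtain a global $\varphi_0\in C^1_*(J^2)$ with $\varphi_0\ge \dzo$ and equality at $(y_0,x_0)$.

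\textbf{Step 4 (HJ equalities).} By Lemma~\ref{lem:reduced}, $\dz(0,y_0;t,x)=t\,\dzo(y_0/t,x/t)$, so differentiation in $t$ at $t=1$ yields the Euler identity
\[
\partial_t \dz\big|_{t=1} \;=\; \dzo(y_0,x) - x\,\partial_x \dzo(y_0,x) - y_0\,\partial_y \dzo(y_0,x).
\]
Along the optimal trajectory reaching $x$ on branch $i$, $\partial_t \dz + H_i(\partial_x \dz)=0$, which gives \eqref{eq:dzo-fwd-int} at $x_0\ne 0$. When $x_0=0$ an optimal trajectory arrives with a non-positive velocity on some branch, so that on each $J_i$ the relevant portion of $L_i$ is the half $q\le 0$, whose Legendre dual in $p$ is $H_i^-$, producing the $\sup_i$ in \eqref{eq:dzo-fwd-bd}. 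The backward equations \eqref{eq:dzo-bwd-int}--\eqref{eq:dzo-bwd-bd} follow symmetrically by time-reversal on the optimal trajectory; the case $N=1$ degenerates to $H_1$ because no other branch is available to leave through.

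\textbf{Main obstacle.} The hardest part is Step 3: showing that each piece $\dzo^{ji}$ is genuinely semi-concave, so that the quadratic majorants exist and can be glued into a single $C^1_*$ test function. Since the $L_k(0)$ need not all coincide, the minimum $L_0(0)$ creates regime transitions (dwelling at $0$ versus not) along which $\dzo^{ji}$ can fail to be $C^1$ or convex; one must verify that the kinks point the right way, i.e. produce concave (not convex) corners, so that semi-concavity survives. This is exactly where the explicit computation cannot be avoided and where the hypothesis that the $H_i$ are independent of $x$ enters crucially.
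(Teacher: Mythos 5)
Your overall strategy --- classify optimal trajectories as piecewise linear with at most three phases (travel to the junction, dwell, travel to $x$, versus a direct trip), reduce $\dzo^{ji}$ to a finite-dimensional minimization over the switching times, and read off the equalities from the first-order conditions via the Legendre identity --- is exactly the route the paper takes (Lemma~\ref{lem:optim-curv} and the decomposition $\dzo=\min(\dzq,\dzun)$, with $\dzun$ split into the ``dwell'' regime $\dzd$ and the ``no-dwell'' regime $\dzt$). The outline is correct, but the proposal stops short at the places where the work is actually concentrated, and one device is wrong as stated.

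First, in Step 3 the quadratic majorant $\dzo(y_0,x_0)+p_0\cdot(x-x_0)-q_0\cdot(y-y_0)+C\left(|x-x_0|^2+|y-y_0|^2\right)$ requires semi-concavity of $\dzo^{ji}$ \emph{with a linear modulus} (a one-sided $C^{1,1}$ bound), which you never establish and which is stronger than needed: on the no-dwell regime $\dzo^{ji}$ coincides with the \emph{convex} implicitly-defined function $\dzt^{ji}$, whose second derivatives are not obviously bounded near $\partial(J_j\times J_i)$, so the constant $C$ may not exist. The correct device --- the one the paper uses --- is to take as test function the active smooth branch of the minimum itself: where $\dzo$ is locally $C^1_*$ one takes $\varphi_0=\dzo$, and on the kink set one takes $\varphi_0=\dzun$, which dominates $\dzo=\min(\dzun,\dzq)$ everywhere and is $C^1$ on each $J_j\times J_i$ \emph{up to the boundary}. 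That last fact is itself the hardest computation (Lemmas~\ref{lem::r111} and \ref{lem::r91}: continuity of the optimal switching time up to the boundary and identification of the limiting slopes $\xi_x,\xi_y$ through the functions $K_l$ and their roots $\xi_l^\pm$), and your proposal does not attempt it. Second, the junction equalities \eqref{eq:dzo-fwd-bd} and \eqref{eq:dzo-bwd-bd} do not follow from the single remark that ``the trajectory arrives with non-positive velocity'': one must check that the quantity $-H_k(\partial_x\dzo^{jk})$ is independent of $k$, always bounded below by $H_k^-(\partial_x\dzo^{jk})$, and that $H_k=H_k^-$ holds at the relevant slope for \emph{at least one} index $k$ --- and which $k$ works depends on whether $j\in I_0$ and on the position of $y$ relative to $-\xi_j^-$ (Lemmas~\ref{lem:dzo-bdry-1} and \ref{lem:dzo-bdry-2}); your sketch also risks circularity in Step 4, where the Hamilton--Jacobi equation along optimal trajectories is invoked although it is precisely what is being proved. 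Finally, you yourself flag the verification that the kinks of $\dzo^{ji}$ are concave corners as ``exactly where the explicit computation cannot be avoided'' but do not carry it out; since that verification, together with the boundary regularity of $\dzun$, \emph{is} the proof, the proposal should be regarded as a correct roadmap with the core steps still open.
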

\begin{rem}
  If $I_0=I_N$, then we have $\dzo\in C^1_*(J^2)$.  This good case
  corresponds to the case where all the $L_i(0)$'s are equal.
\end{rem}
We can interpret Lemma~\ref{lem:am} as follows.
\begin{lemma}
Assume (A1). Then
\begin{equation}\label{eq::r41}
   \dzo(y;x) \ge \frac{\gamma}{4} d(y,x)^2 - C_0
\end{equation}
where constants are made precise in Lemma~\ref{lem:am}.
\end{lemma}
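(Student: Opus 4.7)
The plan is that this is a direct specialization of Lemma~\ref{lem:am} obtained by setting $s=0$ and $t=1$. By the very definition $\dzo(y,x) = \dz(0,y;1,x)$, substituting these values into the bound $\dz(s,y;t,x) \ge \frac{\gamma}{4(t-s)} d(y,x)^2 - C_0(t-s)$ immediately yields
$$\dzo(y,x) \ge \frac{\gamma}{4} d(y,x)^2 - C_0,$$
which is \eqref{eq::r41}. No new argument is required, and the constants $\gamma$ and $C_0$ are exactly those introduced in Lemma~\ref{lem:am}.

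Alternatively, one can view the statement through the scaling relation \eqref{eq::r40} from Lemma~\ref{lem:reduced}: for any $s<t$, writing $\tau = t-s$ and $\tilde y = y/\tau$, $\tilde x = x/\tau$, the bound of Lemma~\ref{lem:am} reads $\tau \dzo(\tilde y, \tilde x) \ge \frac{\gamma}{4\tau} d(y,x)^2 - C_0 \tau = \tau \bigl(\frac{\gamma}{4} d(\tilde y,\tilde x)^2 - C_0\bigr)$, so the two inequalities are equivalent, confirming that \eqref{eq::r41} captures the full content of Lemma~\ref{lem:am} once the time-scaling invariance is factored out.

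There is no real obstacle here; the statement is presented as an ``interpretation'' precisely because no further work is needed beyond invoking Lemma~\ref{lem:am} at $s=0,t=1$.
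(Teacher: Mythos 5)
Your proposal is correct and matches the paper's intent exactly: the paper offers no separate proof, presenting the lemma as an ``interpretation'' of Lemma~\ref{lem:am}, which is precisely the specialization $s=0$, $t=1$ combined with the definition $\dzo(y,x)=\dz(0,y;1,x)$ that you carry out. Your additional remark that the scaling relation of Lemma~\ref{lem:reduced} makes the two bounds equivalent is accurate and consistent with the paper's viewpoint.
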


\subsection{Piecewise linear trajectories}

We are going to see that the infimum defining the minimal action can
be computed among piecewise linear trajectories, and more precisely
among trajectories that are linear as long as they do not reach the
junction point. This is a consequence of the fact that the
Hamiltonians do not depend on $x$ and are convex (through Jensen's
inequality). 

In order to state a precise statement, we first introduce that optimal
curves are of two types: either they reach the junction point, or they
stay in a branch and are straight lines.  This is the reason why we
introduce first the action associated with straight line trajectories
$$
\dzq(y,x)= \left\{\begin{array}{ll} L_i\left({x_i-y_i}\right) &\quad
    \mbox{if}
    \quad (y,x)\in J_i^2 \setminus \{(0,0)\},\\
    L_0(0)& \quad \mbox{if}\quad y=0=x,\\
    +\infty &\quad \mbox{otherwise}
\end{array} \right.
$$
and the action associated with piecewise linear trajectories passing
through the junction point
$$
\dzun(y,x)=\inf_{0\le \tau_1\le \tau_2\le 1}
\left\{\mathcal{E}_1(\tau_1,y)+\mathcal{E}_2(\tau_2,x)\right\}
$$
where 
$$
\mathcal{E}_1 (\tau_1,y) = 
\begin{cases}
          \tau_1 L_j \left(-\frac{y_j}{\tau_1}\right) -\tau_1 L_0 (0) &
           \text{ for }  y=y_j e_j\not= 0, \tau_1 \neq 0\\
          0  & \text{ for }  y=0 \\
          +\infty & \text{ for } \tau_1 =0, y \neq 0
\end{cases}
$$
and 
$$
\mathcal{E}_2 (\tau_2,x) =  \begin{cases}
    (1-\tau_2) L_i \left(\frac{x_i}{1-\tau_2}\right) +\tau_2 L_0 (0), &
    \text{ for } x=x_i e_i\not= 0, \tau_2 \neq 1\\
    L_0(0)  &\text{ for } x=0 \\
+ \infty & \text{ for } \tau_2 =1, x \neq 0.
\end{cases}
$$
\begin{remark}
  By defining the $\mathcal{E}_i$'s in such a way, we treat the
  degenerate cases: $x=0$ or $y=0$. Indeed, $\tau_1$ (resp. $\tau_2$)
  measures how long it takes to the trajectory to reach the junction
  point (resp. the final point $x$) from the starting point $y$
  (resp. the junction point).
\end{remark}
The following facts will be used several times.
\begin{lemma}\label{lem:cont-ei}
The function $\mathcal{E}_1$ (resp. $\mathcal{E}_2$) is continuous in
$(0,1] \times J^*$ (resp. in $[0,1) \times J^*$). 
\end{lemma}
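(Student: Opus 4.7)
The plan is to reduce the claim to a separate continuity check on each branch and then to verify it by direct composition of continuous maps.

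The first observation I would make is a topological one: the punctured junction $J^*=J\setminus\{0\}$ is the \emph{disjoint} union $\bigsqcup_{j=1}^N J_j^*$ as a subspace of $\R^2$, because removing the origin separates the half-lines. Consequently, to prove continuity of $\mathcal{E}_1$ on $(0,1]\times J^*$ it suffices to prove continuity on each open-ended cylinder $(0,1]\times J_j^*$ for $j\in I_N$; the same reduction works for $\mathcal{E}_2$ on $[0,1)\times J^*$.

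Next, on $(0,1]\times J_j^*$ I would parametrise by $(\tau_1,y_j)\in (0,1]\times(0,+\infty)$ via $y=y_je_j$. In this parametrisation
\[
\mathcal{E}_1(\tau_1,y_j e_j) = \tau_1\,L_j\!\left(-\frac{y_j}{\tau_1}\right) - \tau_1\,L_0(0),
\]
which is a composition of the continuous maps $(\tau_1,y_j)\mapsto -y_j/\tau_1$ (well defined and continuous since $\tau_1>0$), the continuous function $L_j$ (which is $C^2$ under (A1)), and multiplication by the continuous function $\tau_1$. Hence $\mathcal{E}_1$ is continuous on $(0,1]\times J_j^*$, and combining over $j$ one obtains continuity on $(0,1]\times J^*$. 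An identical argument, parametrising $x=x_ie_i$ with $x_i>0$ and $\tau_2\in[0,1)$ (so that $1-\tau_2>0$), gives the continuity of
\[
\mathcal{E}_2(\tau_2,x_i e_i) = (1-\tau_2)\,L_i\!\left(\frac{x_i}{1-\tau_2}\right) + \tau_2\,L_0(0)
\]
on $[0,1)\times J_i^*$, hence on $[0,1)\times J^*$.

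There is essentially no obstacle: the pathological values $\tau_1=0$ (where $\mathcal{E}_1$ can jump to $+\infty$) and $\tau_2=1$ (where $\mathcal{E}_2$ can jump to $+\infty$), as well as the junction point $y=0$ or $x=0$ (where the branch index is not unambiguous), are precisely the ones excluded from the domains in the statement. The only mildly delicate point is to notice the disjoint-union structure of $J^*$, which is what allows us to ignore the incompatibility of the formulae across different branches. Continuity across branches would require passing through $0$, which is forbidden here.
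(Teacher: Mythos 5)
Your proof is correct; the paper in fact states Lemma~\ref{lem:cont-ei} without proof, treating it as elementary, and your argument (restricting to the region where the first branch of the definition of $\mathcal{E}_1$, resp.\ $\mathcal{E}_2$, applies, using that each $J_j^*$ is open and closed in $J^*$, and composing continuous maps since $\tau_1>0$, resp.\ $1-\tau_2>0$) is exactly the intended justification. Nothing is missing.
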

\begin{lemma}\label{lem:lsc-ei}
The function $\mathcal{E}_i$, $i=1,2$ are lower semi-continuous in $[0,1]
\times J$. 
\end{lemma}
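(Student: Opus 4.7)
The plan is to leverage Lemma~\ref{lem:cont-ei}, which already provides continuity (hence lower semi-continuity) on the interior $(0,1]\times J^*$ for $\mathcal{E}_1$ and on $[0,1)\times J^*$ for $\mathcal{E}_2$. It remains only to check lower semi-continuity at boundary points: for $\mathcal{E}_1$, those with $\tau_1=0$ or $y=0$; symmetrically for $\mathcal{E}_2$. I describe the argument for $\mathcal{E}_1$ in detail; the case of $\mathcal{E}_2$ is completely analogous after the substitution $\tau_1 \leftrightarrow 1-\tau_2$.

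The main tool is the quadratic lower bound $L_j(p) \ge (\gamma/4)p^2 - C_0$ established in (\ref{eq::r50}). Combined with the fact that $C_0 + L_0(0) \ge 0$ built into the definition of $C_0$, this yields, for $y = y_j e_j \neq 0$ and $\tau_1 > 0$,
$$
\mathcal{E}_1(\tau_1, y) \;\ge\; \frac{\gamma}{4}\,\frac{y_j^2}{\tau_1} - (C_0 + L_0(0))\,\tau_1.
$$
Two of the three boundary cases follow immediately from this estimate. If $(\tau_1^n, y^n) \to (0, y_*)$ with $y_* \neq 0$, then for $n$ large $y^n$ lies in the same branch as $y_*$ and stays bounded away from $0$, so the quadratic term forces $\mathcal{E}_1(\tau_1^n, y^n) \to +\infty = \mathcal{E}_1(0, y_*)$. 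If $(\tau_1^n, y^n) \to (0,0)$, the quadratic term is nonnegative and the linear remainder is $o(1)$, giving $\liminf \ge 0 = \mathcal{E}_1(0,0)$.

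The subtlest case is $(\tau_1^n, y^n) \to (\tau_1, 0)$ with $\tau_1 > 0$, where $\mathcal{E}_1(\tau_1, 0) = 0$. For subsequences with $y^n = 0$ the value is $0$; for those with $y^n = y^n_{j_n} e_{j_n} \neq 0$, I extract a further subsequence along which $j_n \equiv j$ is constant (possible since there are finitely many branches). Then $-y^n_j/\tau_1^n \to 0$, and by continuity of $L_j$ the expression $\tau_1^n L_j(-y^n_j/\tau_1^n) - \tau_1^n L_0(0)$ converges to $\tau_1(L_j(0) - L_0(0))$, which is $\ge 0$ precisely by the ordering (\ref{eq::r26}). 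Hence $\liminf \ge 0$. This case also makes transparent why $\mathcal{E}_1$ is in general only lower semi-continuous, not continuous: the limit along $y^n \to 0$ within a branch $J_j$ with $j \notin I_0$ is strictly positive.

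I do not anticipate any real obstacle: the proof is a short case analysis once the quadratic bound from the proof of Lemma~\ref{lem:am} and the ordering (\ref{eq::r26}) are in hand. The treatment of $\mathcal{E}_2$ introduces no new ingredient; the only bookkeeping point is that the baseline value at $x=0$ is now $L_0(0)$ rather than $0$, which is automatically compatible with the quadratic bound in the same way.
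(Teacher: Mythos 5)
Your proof is correct and follows essentially the same route as the paper's: the quadratic lower bound (\ref{eq::r50}) controls the degenerate limit $\tau_1\to 0$, and the ordering (\ref{eq::r26}) handles the limit $y\to 0$ with $\tau_1>0$ fixed. The paper packages the identical argument more tersely via the auxiliary function $g(\tau,y)=\mathcal{E}_1(\tau,y)+\tau L_0(0)$ and its lower bound; your explicit case analysis at $(\tau_1,0)$ with $\tau_1>0$ --- where the bound alone does not suffice and one genuinely needs $L_j(0)\ge L_0(0)$ --- makes precise a step the paper leaves implicit.
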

\begin{proof}
Consider the function defined for $(\tau,y)\in [0,1]\times J$ by
$$
g(\tau,y)=\left\{\begin{array}{ll}
    \tau L_i(-\frac{y_i}\tau) & \text{ if } y=y_ie_i\not=0, \tau \neq 0\\
    \tau L_0(0) & \text{ if } y=0 \\
+ \infty & \text{ if } y \neq 0, \tau =0.
\end{array}\right.
$$
From the inequality for $\tau>0$ (consequence of (\ref{eq::r50})):
$$
g(\tau,y)\ge \frac{\gamma}4 \frac{|y|^2}{\tau} - C_0 \tau,
$$
we deduce that $g$ is lower semi-continuous. Consequently, the map
$\mathcal{E}_1$ is lower semi-continuous.  We proceed similarly for
$\mathcal{E}_2$.
\end{proof}
We first show the main lemma of this subsection.
\begin{lemma}\label{lem:optim-curv}
The infimum defining the reduced minimal action $\dzo$ can be computed among
piecewise linear trajectories; more precisely, for all $x,y \in J$, 
$$
\dzo(y,x)=\min\left(\dzq(y,x),\dzun(y,x)\right).
$$
\end{lemma}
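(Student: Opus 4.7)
The plan is to prove the two inequalities $\dzo \le \min(\dzq, \dzun)$ and $\dzo \ge \min(\dzq, \dzun)$ separately, the first by exhibiting explicit admissible competitors and the second by a Jensen's-inequality argument that converts any admissible curve into a piecewise linear one without increasing cost.

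For the upper bound I would first note that when $(y,x) \in J_i^2$, the segment $X(\tau)=(1-\tau)y+\tau x$ lies in the convex set $J_i$, is admissible, and has cost $L_i(x_i-y_i)=\dzq(y,x)$; this handles every finite value of $\dzq$. Next, given $0\le\tau_1\le\tau_2\le 1$ and writing $y=y_j e_j$, $x=x_i e_i$, I would use the piecewise linear trajectory that goes linearly from $y$ to $0$ on $[0,\tau_1]$, stays at the origin on $[\tau_1,\tau_2]$, and goes linearly from $0$ to $x$ on $[\tau_2,1]$, and check that its cost is exactly $\mathcal{E}_1(\tau_1,y)+\mathcal{E}_2(\tau_2,x)$; taking the infimum in $(\tau_1,\tau_2)$ gives $\dzo\le\dzun$. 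The degenerate sub-cases ($y=0$, $x=0$, or $\tau_1\in\{0,1\}$) reduce to constant trajectories and match the specific values built into the definitions of $\mathcal{E}_1$ and $\mathcal{E}_2$.

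For the lower bound I would take any admissible $X\in\mathcal{A}(0,y;1,x)$ and split according to whether it meets the junction. Let $S=\{\tau\in[0,1]\colon X(\tau)=0\}$, which is closed by continuity. If $S\cap(0,1)=\emptyset$, then $X((0,1))\subset J^*$ and by connectedness $X$ stays in a single branch $J_i$, forcing $y,x\in J_i$, so Jensen applied to the convex $L_i$ yields $\int_0^1 L_i(\dot X_i)\,d\tau \ge L_i(x_i-y_i)=\dzq(y,x)$. Otherwise set $\tau_1=\inf S$ and $\tau_2=\sup S$; on $[0,\tau_1)$ (respectively $(\tau_2,1]$) the curve stays in the branch $J_j$ containing $y$ (respectively $J_i$ containing $x$), and Jensen yields $\int_0^{\tau_1} L_j(\dot X_j)\,d\tau \ge \tau_1 L_j(-y_j/\tau_1)$ and the analogous bound on $(\tau_2,1]$. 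On $[\tau_1,\tau_2]$, on $\{X=0\}$ one has $\dot X=0$ a.e. with $L(0,0)=L_0(0)$, while each maximal open excursion $(a,b)\subset[\tau_1,\tau_2]$ into a branch $J_k$ satisfies $X(a)=X(b)=0$ and Jensen gives $\int_a^b L_k(\dot X_k)\,d\tau \ge (b-a)L_k(0)\ge (b-a)L_0(0)$ by (\ref{eq::r26}). Summing, the cost on $[\tau_1,\tau_2]$ is bounded below by $(\tau_2-\tau_1)L_0(0)$, and the total integral by $\mathcal{E}_1(\tau_1,y)+\mathcal{E}_2(\tau_2,x)\ge\dzun(y,x)$.

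The main obstacle is the bookkeeping on the middle interval $[\tau_1,\tau_2]$: one must decompose the complement of $\{X=0\}$ into its countably many maximal open excursion intervals, show each lies in a single branch by continuity, and invoke $L_k(0)\ge L_0(0)=\min_i L_i(0)$ to dominate every excursion by the cost of simply remaining at the junction. This is the step that converts the discontinuity of $L$ at the junction into a clean lower bound; it uses crucially the convexity of each $L_i$ (through Jensen) and the specific choice $L_0=\min_i L_i$ in (\ref{eq::r15}).
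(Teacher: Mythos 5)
Your proposal is correct and follows essentially the same route as the paper: the lower bound is obtained by splitting an arbitrary admissible curve according to whether it meets the junction, isolating the first and last times $\tau_1\le\tau_2$ at which it vanishes, decomposing the middle interval into the zero set plus countably many excursions returning to $0$, and applying Jensen's inequality branch by branch together with $L_k(0)\ge L_0(0)$. The only differences are cosmetic: you spell out the upper bound (which the paper treats as immediate) and use plain convexity where the paper invokes the quantitative inequality \eqref{eq::r42}, whose extra quadratic term is not needed for this lemma.
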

\begin{proof}
  We write with obvious notation $\dzo (y,x) = \inf_{X \in
    \mathcal{A}_0(y,x)} \mathcal{E} (X)$. In order to prove the lemma,
  it is enough to consider a curve $X \in \mathcal{A}_0 (y,x)$ and
  prove that
$$
\mathcal{E} (X) \ge \min (\dzq(y,x),\dzun (y,x)).
$$
  To do so, we first remark that the uniform convexity of $L_i$
  implies that for all $p_0 \in \R$, we have
\begin{equation}\label{eq::r42}
L_i(p)\ge L_i(p_0) + L_i'(p_0) (p-p_0) + \frac{\gamma}2 (p-p_0)^2.
\end{equation}
We now consider an admissible trajectory $X\colon[0,1] \to J$ and we treat
different cases. \medskip

\noindent \textsc{Case A: $X((t_1,t_2)) \subset J_i^*$.}  We assume
that a curve $X$ stays in one of the branch $J_i^*$ on the time
interval $(t_1,t_2)$ with $t_1< t_2$. In such a case, we consider the
curve $\tilde{X}$ with same end points $X(t_1)$ and $X(t_2)$ in $J_i$
but linear.  If $p_0 \in \R$ is such that
$p_0e_i=\dot{\tilde{X}}(\tau)$ for $\tau \in (t_1,t_2)$ and $pe_i=
\dot{X}(\tau)$, we deduce from \eqref{eq::r42} that
\begin{equation}\label{eq::r27}
  \int_{t_1}^{t_2} L(X(\tau),\dot{X}(\tau))d\tau \ge \int_{t_1}^{t_2}
  L(\tilde{X}(\tau),\dot{\tilde{X}}(\tau))d\tau + \frac{\gamma}2
  \int_{t_1}^{t_2} |\dot{X}(\tau)-\dot{\tilde{X}}(\tau)|^2d\tau.
\end{equation} \medskip

\noindent \textsc{Case B: $X([t_1,t_2]) \subset J_i$ with
  $X(t_1)=X(t_2)=0$.}
In that case, let us set $\tilde{X}(\tau)=0$ for $\tau\in
[t_1,t_2]$. Using (\ref{eq::r42}) with $p_0=0$ and the definition of
$L_0$ as a minimum of the $L_j$'s (see (\ref{eq::r26})), we get that
$$
L_i(p)\ge L_0(0)+ L_i'(0) p +  \frac{\gamma}2 p^2
$$
from what we deduce that (\ref{eq::r27}) still holds true. \medskip

\noindent \textsc{Case C: the general case.}  By assumption, we have
$X\in {\mathcal A}_0(y;x) \subset C([0,1])$. We then distinguish two
cases.  Either $0\not\in X([0,1])$, and then we define $\tilde{X}$ as
in Case~A. In this case, \eqref{eq::r27} implies that 
$$
\mathcal{E}(X)\ge \dzq (y,x).
$$
Or $0\in X([0,1])$, and then we call $[\tau_1,\tau_2]\subset [s,t]$
the largest interval such that $X(\tau_1)=0=X(\tau_2)$, and define
$\tilde{X}$ as follows: it is linear between $0$ and $\tau_1$, and
reaches $0$ at $\tau_1$; it stays at $0$ in $(\tau_1,\tau_2)$; then it
is linear in $(\tau_2,1)$ and reaches $x$ at $t=1$. Using again the
continuity of $X$, we can find a decomposition of $[\tau_1,\tau_2]$ as
a disjoint union of intervals ${\mathcal I}_k$ (with an at most
countable union)
$$[\tau_1,\tau_2]=\bigcup_{k} {\mathcal I}_k$$ 
such that for each $k$, $X({\mathcal I}_k)\subset J_{i_k}$ for some
$i_k\in I_N$ and $X=0$ on $\partial {\mathcal I}_{k}$.  Using Case A
or Case B on each segment $\overline{{\mathcal I}}_k$, we deduce that
$$
{\mathcal E}(X)\ge \dzun(y,x).
$$
\end{proof}

\subsection{Continuity of the (reduced) minimal action}

\begin{lemma}[Continuity of $\dzun$]\label{lem:dzun-cont}
 The function $\dzun$ is continuous in $J^2$.
\end{lemma}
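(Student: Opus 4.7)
The plan is to establish continuity by proving lower and upper semi-continuity separately.

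\textbf{Lower semi-continuity.} First I would observe that the admissible set $K = \{(\tau_1,\tau_2) : 0 \le \tau_1 \le \tau_2 \le 1\}$ is compact, and since $\mathcal{E}_1,\mathcal{E}_2 \ge -\max_i |L_i(0)|$ (by \eqref{eq::r50}) the infimum defining $\dzun(y,x)$ is always finite and attained by Lemma~\ref{lem:lsc-ei}. So given $(y_n,x_n) \to (y_0,x_0)$ and minimizers $(\tau_1^n,\tau_2^n) \in K$, I would extract a subsequence converging to some $(\tau_1^*,\tau_2^*) \in K$ and invoke Lemma~\ref{lem:lsc-ei} to get
$$
\liminf_{n \to \infty} \dzun(y_n,x_n) = \liminf_{n \to \infty} \bigl[\mathcal{E}_1(\tau_1^n,y_n) + \mathcal{E}_2(\tau_2^n,x_n)\bigr] \ge \mathcal{E}_1(\tau_1^*,y_0) + \mathcal{E}_2(\tau_2^*,x_0) \ge \dzun(y_0,x_0).
$$

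\textbf{Upper semi-continuity.} This is the part requiring care. Fix an optimal $(\tau_1^0,\tau_2^0)$ for $(y_0,x_0)$ and, given $(y_n,x_n) \to (y_0,x_0)$, I would construct admissible pairs $(\tau_1^n,\tau_2^n)$ with $\mathcal{E}_1(\tau_1^n,y_n)+\mathcal{E}_2(\tau_2^n,x_n) \to \dzun(y_0,x_0)$. If $y_0 \ne 0$, then necessarily $\tau_1^0 > 0$ (else $\mathcal{E}_1 = +\infty$), and for $n$ large $y_n$ lies in the same branch as $y_0$, so Lemma~\ref{lem:cont-ei} gives continuity of $\mathcal{E}_1$ at $(\tau_1^0,y_0)$ and I can take $\tau_1^n = \tau_1^0$. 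The symmetric choice $\tau_2^n = \tau_2^0 \in [0,1)$ handles $x_0 \ne 0$.

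\textbf{Degenerate cases.} The delicate point, and the main obstacle, is when $y_0 = 0$ (symmetrically $x_0 = 0$), because then $\mathcal{E}_1(\tau_1^0,0) = 0$ for every $\tau_1^0$, so the optimal pair can be chosen with $\tau_1^0 = 0$, yet $\mathcal{E}_1(0,y_n) = +\infty$ when $y_n \ne 0$. I would resolve this by letting $\tau_1^n \to 0$ slowly enough that $|y_n|/\tau_1^n$ also tends to $0$: concretely, take $\tau_1^n = \sqrt{d(y_n,0)}$ (and $\tau_1^n=0$ when $y_n=0$), so that
$$
\mathcal{E}_1(\tau_1^n,y_n) = \tau_1^n\Bigl(L_{i(y_n)}\bigl(-\sqrt{d(y_n,0)}\bigr) - L_0(0)\Bigr) \longrightarrow 0 = \mathcal{E}_1(\tau_1^0,y_0).
$$
The analogous choice $\tau_2^n = 1 - \sqrt{d(x_n,0)}$ handles $x_0 = 0$ (with $\tau_2^0 = 1$).

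\textbf{Admissibility and conclusion.} I would then verify the ordering $0 \le \tau_1^n \le \tau_2^n \le 1$ case by case: when both $y_0,x_0 \ne 0$ it is immediate; when only $y_0 = 0$, $\tau_1^n \to 0 \le \tau_2^0$; when only $x_0 = 0$, $\tau_2^n \to 1 \ge \tau_1^0$; when both vanish, $\sqrt{d(y_n,0)} + \sqrt{d(x_n,0)} \le 1$ for $n$ large. In each case the construction yields
$$
\dzun(y_n,x_n) \le \mathcal{E}_1(\tau_1^n,y_n) + \mathcal{E}_2(\tau_2^n,x_n) \longrightarrow \mathcal{E}_1(\tau_1^0,y_0) + \mathcal{E}_2(\tau_2^0,x_0) = \dzun(y_0,x_0),
$$
which proves $\limsup_n \dzun(y_n,x_n) \le \dzun(y_0,x_0)$ and completes the continuity.
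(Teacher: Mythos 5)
Your overall strategy is exactly the paper's: lower semi-continuity from Lemma~\ref{lem:lsc-ei} plus compactness of the triangle, and upper semi-continuity by producing admissible pairs $(\tau_1^n,\tau_2^n)$ for $(y_n,x_n)$ whose energies converge to the optimal value at $(y_0,x_0)$. The lower semi-continuity part and the choice $\tau_1^n=\sqrt{d(y_n,0)}$ in place of the paper's $|y_n|$ are both fine. However, there is a genuine gap in your admissibility check for the two mixed degenerate cases. When $y_0=0$ and $x_0\in J_i^*$ you keep $\tau_2^n=\tau_2^0$ and argue ``$\tau_1^n\to 0\le \tau_2^0$'', but this only gives $\tau_1^n\le\tau_2^n$ for large $n$ when $\tau_2^0>0$. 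The boundary value $\tau_2^0=0$ is not exotic: since $\partial_{\tau_2}\mathcal{E}_2(\tau_2,x_0)=-K_i\bigl(\tfrac{x_0}{1-\tau_2}\bigr)$, the unique minimizer is $\tau_2^0=0$ whenever $x_0\ge\xi_i^+$ (in particular for \emph{every} $x_0\in J_i^*$ when $i\in I_0$, since then $\xi_i^+=0$ and $K_i<0$ on $(0,+\infty)$ by Lemma~\ref{lem:k}). In that situation your pair $(\sqrt{d(y_n,0)},\,0)$ violates $\tau_1^n\le\tau_2^n$ as soon as $y_n\neq 0$, so the inequality $\dzun(y_n,x_n)\le\mathcal{E}_1(\tau_1^n,y_n)+\mathcal{E}_2(\tau_2^n,x_n)$ does not follow from the definition of $\dzun$ as an infimum over the constrained triangle. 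The symmetric problem occurs when $x_0=0$, $y_0\in J_j^*$ and the unique minimizer is $\tau_1^0=1$ (which happens whenever $y_0\ge-\xi_j^-$).

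The repair is exactly the device used in the paper: take $\tau_2^n=\max(\tau_2^0,\tau_1^n)$ in the first mixed case and $\tau_1^n=\min(\tau_1^0,\tau_2^n)$ in the second. Admissibility is then automatic, and since $\tau_2^n\to\tau_2^0\in[0,1)$ (resp.\ $\tau_1^n\to\tau_1^0\in(0,1]$) and $x_0\in J_i^*$ (resp.\ $y_0\in J_j^*$), Lemma~\ref{lem:cont-ei} still gives $\mathcal{E}_2(\tau_2^n,x_n)\to\mathcal{E}_2(\tau_2^0,x_0)$ (resp.\ $\mathcal{E}_1(\tau_1^n,y_n)\to\mathcal{E}_1(\tau_1^0,y_0)$), so the rest of your argument goes through unchanged. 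With that one-line modification your proof coincides with the paper's.
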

\begin{proof}
  We first prove that $\dzun$ is lower semi-continuous. We know from
  Lemma~\ref{lem:lsc-ei} that the function
$$
G(\tau_1,\tau_2;y;x)=\mathcal{E}_1(\tau_1,y)+\mathcal{E}_2(\tau_2,x)
$$
is lower semi-continuous for $y,x\in J$ and $0\le \tau_1\le \tau_2\le 1$.
Therefore the function
$$
\dzun(y;x)=\inf_{0\le \tau_1\le \tau_2\le 1}
G(\tau_1,\tau_2;y,x)
$$
is also lower semi-continuous (since the infimum is taken over a
compact set). Besides, the infimum is in fact a minimum.

We now prove that $\dzun$ is upper semi-continuous at any point
$(y,x)$. Consider first $(\tau_1,\tau_2) \in [0,1]^2$ such that
$$
\dzun(y,x) = \mathcal{E}_1(\tau_1,y) + \mathcal{E}_2(\tau_2,x).
$$
Given any sequence $(y^k,x^k)\to (y,x)$, we want to show
that
\begin{equation}\label{eq::r85}
\dzun(y^k,x^k)\le  \dzun(y,x) + o_k(1)
\end{equation}
We use 
$$
\dzun(y^k,x^k)\le \mathcal{E}_1(\tau_1^k,y^k) + \mathcal{E}_2(\tau_2^k,x^k)
$$
with an appropriate choice of $(\tau_1^k,\tau_2^k)$. 

\paragraph{Case 1: $y\in J_j^*$, $x\in J_i^*$.}
In this case, we choose $(\tau_1^k,\tau_2^k) = (\tau_1,\tau_2) \in (0,1)^2$
and we use Lemma~\ref{lem:cont-ei} in order to get
$$
\mathcal{E}_1(\tau_1^k,y^k)\to \mathcal{E}_1(\tau_1,y)
$$
and
$$
\mathcal{E}_2(\tau_2^k,x^k)\to \mathcal{E}_2(\tau_2,x) 
$$
Hence we conclude that (\ref{eq::r85}) holds true.

\paragraph{Case 2: $y=0$, $x\in J_i^*$.}  We choose
$(\tau_1^k,\tau_2^k) = (|y^k|,\max(\tau_2,|y^k|)) \in [0,1)^2.$ We still
have $\tau_2^k\to \tau_2$ and we can use Lemma~\ref{lem:cont-ei} in order
to get
$$
\mathcal{E}_2(\tau_2^k,x^k)\to \mathcal{E}_2(\tau_2,x).
$$
We also have (if $y^k\in J_j$)
\begin{equation}\label{eq::r88}
  \mathcal{E}_1(\tau_1^k,y^k) \le  |y^k| L_j
  \left(-\frac{y^k_j}{|y^k|}\right) 
  -|y^k| L_0 (0)\to 0=\mathcal{E}_1(\tau_1,0).
\end{equation}
Hence we conclude that (\ref{eq::r85}) holds true.

\paragraph{Case 3: $y\in J_j^*$, $x=0$.}  We choose
$(\tau_1^k,\tau_2^k) = (\min(\tau_1,1-|x^k|),1-|x^k|) \in (0,1]^2$ We still
have $\tau_1^k\to \tau_1$ and then
$$
\mathcal{E}_1(\tau_1^k,y^k)\to \mathcal{E}_1(\tau_1,y)
$$
(since $\mathcal{E}_1$ is continuous in $(0,1] \times J^*$).
We also have (if $x^k\in J_i$)
\begin{equation}\label{eq::r89}
  \mathcal{E}_2(\tau_2^k,x^k) \le  |x^k| L_i
  \left(\frac{x^k_i}{|x^k|}\right) 
+(1-|x^k|) L_0 (0)\to L_0(0) = \mathcal{E}_2(\tau_2,0)
\end{equation}
Hence we conclude that (\ref{eq::r85}) holds true.

\paragraph{Case 4: $y=0$, $x=0$.}  We choose $(\tau_1^k,\tau_2^k) =
(|y^k|,1-|x^k|) \in [0,1)\times (0,1]$.  We deduce \eqref{eq::r85} from
\eqref{eq::r88} and \eqref{eq::r89}.
\end{proof}
\begin{lemma}
  The function $\dzo$ is continuous in $J^2$.
\end{lemma}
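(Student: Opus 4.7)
The plan is to combine Lemma~\ref{lem:optim-curv}, which yields the identity $\dzo(y,x)=\min(\dzq(y,x),\dzun(y,x))$, with the continuity of $\dzun$ established in Lemma~\ref{lem:dzun-cont}, and to exploit the structure of $\dzq$: on each $J_i\times J_i$ it coincides with the continuous function $L_i(x_i-y_i)$, and on the complement of $\bigcup_{i}J_i\times J_i$ it equals $+\infty$. Lower semi-continuity of $\dzo$ is then immediate, because $\dzq$ is lsc on $J^2$ (the only discontinuities are jumps up to $+\infty$ when leaving a product $J_i\times J_i$) and $\dzun$ is continuous; the minimum of two lsc functions is lsc.

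For upper semi-continuity at a point $(y,x)\in J^2$ I would distinguish two regimes. \emph{Interior regime:} if $y\in J_j^*$ and $x\in J_i^*$, then $J_j^*$ and $J_i^*$ are open, so any sequence $(y^k,x^k)\to(y,x)$ eventually satisfies $y^k\in J_j^*$ and $x^k\in J_i^*$. If $i\neq j$, then $\dzq\equiv+\infty$ on this set, hence $\dzo=\dzun$ nearby and continuity follows from Lemma~\ref{lem:dzun-cont}; if $i=j$, then $\dzq(y^k,x^k)=L_i(x_i^k-y_i^k)\to L_i(x_i-y_i)=\dzq(y,x)$, and $\dzo=\min(\dzq,\dzun)$ is continuous as the min of two continuous functions.

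\emph{Boundary regime:} if $y=0$ or $x=0$, the key observation is that $\dzun(y,x)\le\dzq(y,x)$, so that $\dzo(y,x)=\dzun(y,x)$. Indeed, inspecting the definitions of $\mathcal{E}_1$ and $\mathcal{E}_2$, the degenerate choice $(\tau_1,\tau_2)=(0,0)$ when $y=0$ gives $\mathcal{E}_1(0,0)+\mathcal{E}_2(0,x)=L_i(x_i)=\dzq(0,x)$; the choice $(\tau_1,\tau_2)=(1,1)$ when $x=0$ gives $\mathcal{E}_1(1,y)+\mathcal{E}_2(1,0)=L_j(-y_j)=\dzq(y,0)$; and the case $y=x=0$ is handled analogously. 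Since $\dzo\le\dzun$ everywhere on $J^2$, the continuity of $\dzun$ then implies, for any sequence $(y^k,x^k)\to(y,x)$,
\[
\limsup_{k\to\infty}\dzo(y^k,x^k)\;\le\;\lim_{k\to\infty}\dzun(y^k,x^k)\;=\;\dzun(y,x)\;=\;\dzo(y,x).
\]
Combined with lower semi-continuity, this yields continuity at every point of $J^2$. The only non-formal step is the comparison $\dzun\le\dzq$ on the boundary, which reduces to the short computation indicated above; everything else is a routine case analysis based on the openness of each $J_i^*$.
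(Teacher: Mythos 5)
Your proof is correct and follows essentially the same route as the paper: lower semi-continuity from writing $\dzo=\min(\dzq,\dzun)$ with $\dzq$ lsc and $\dzun$ continuous, and upper semi-continuity by checking $\dzun\le\dzq$ at boundary points of $J_i\times J_i$ via the same degenerate choices $(\tau_1,\tau_2)=(0,0)$ or $(1,1)$, so that $\dzo=\dzun$ there. The paper merely records this as the case formula \eqref{eq:dzo-dzun}; your explicit interior/boundary case split is the same argument spelled out.
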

\begin{proof}
  Since $\dzq$ is lower semi-continuous, we can use
  Lemmas~\ref{lem:optim-curv} and \ref{lem:dzun-cont} in order to conclude
  that $\dzo$ is lower semi-continuous.

Consider $(y,x) \in \partial (J_i \times J_i) \setminus \{(0,0)\}$. Then either $x=0$ or
$y=0$. Moreover for $y=y_ie_i$ and $x=x_ie_i$,
$$
\dzun(y,x)\le \left\{\begin{array}{ll}
    \mathcal{E}_1(1,y)+\mathcal{E}_2(1,x) & \text{ if } x_i=0\\
    \mathcal{E}_1(0,y)+\mathcal{E}_2(0,x) & \text{ if } y_i=0
\end{array}\right\} \le L_i\left(x_i-y_i\right).
$$
Therefore for each $i\in I_N$, we have for $(y,x)\in \partial
(J_i\times J_i)$, 
$$
\dzun(y,x) \le \dzq(y,x) 
$$
Therefore we have with $y=y_ie_i$, $x=x_ie_i$
\begin{equation}\label{eq:dzo-dzun}
\dzo(y,x)=\begin{cases}
  \min(\dzun(y,x), L_i(x_i-y_i)) & \text{ if } (y,x)\in J_i\times J_i\\
  \dzun(y,x) & \text{ if } (y,x)\in \partial (J_i\times
  J_i) \\
  \dzun(y,x) & \text{ otherwise.}
\end{cases}
\end{equation}
This implies that $\dzo$ is continuous in $J^2$.
\end{proof}

\subsection{Study of $\dzun$}

In view of \eqref{eq:dzo-dzun}, we see that the study of $\dzo$
can now be reduced to the study of $\dzun$. The function $\dzun$ is defined as
a minimum over a triangle $\{(\tau_1,\tau_2) \in [0,1]^2: \tau_1 \le \tau_2
\}$. We will see below that $\dzun$ is defined implicitly when the constraint $\tau_1
\le \tau_2$ is active ($\dzt$) or defined explicitly if not ($\dzd$).  
In other words, it will be linear ``as long as'' trajectories stay some
time ($\tau_2 -\tau_1>0$) at the junction point. 

We first define for $(y,x)\in J^2$,
\begin{equation}\label{eq:def-dzt}
\dzt(y,x) = \inf_{0\le \tau\le
  1}\left\{\mathcal{E}_1(\tau,y)+\mathcal{E}_2(\tau,x)\right\}.
\end{equation}
The continuity of $\dzt$ will be used later on. 
\begin{lemma}[Continuity of $\dzt^{ji}$]\label{lem::r101}
  The restrictions $\dzt^{ji}$ of $\dzt$ are continuous in $(J_j\times
  J_i)\setminus \{(0,0)\}$ and continuous at $(0,0)$ if
  $j\in I_0$ or $i\in I_0$.
\end{lemma}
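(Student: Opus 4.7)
The plan mirrors the proof of Lemma~\ref{lem:dzun-cont}: establish lower semicontinuity globally, then upper semicontinuity by exhibiting, for each convergent sequence $(y^k,x^k)\to(y_0,x_0)$ in $J_j\times J_i$, a suitable sequence $\tau^k\in[0,1]$ for the parameter defining $\dzt$.

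Lower semicontinuity is immediate. By Lemma~\ref{lem:lsc-ei}, the function $G(\tau,y,x):=\mathcal{E}_1(\tau,y)+\mathcal{E}_2(\tau,x)$ is lower semi-continuous on $[0,1]\times J\times J$; since the infimum is taken over the compact set $\tau\in[0,1]$, the function $\dzt(y,x)=\inf_\tau G(\tau,y,x)$ is l.s.c.\ in $(y,x)$, and the infimum is attained for some $\tau^\ast\in[0,1]$.

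For upper semicontinuity at $(y_0,x_0)\in(J_j\times J_i)\setminus\{(0,0)\}$, I fix a minimizer $\tau^\ast$ and a sequence $(y^k,x^k)\to(y_0,x_0)$ with $y^k\in J_j$, $x^k\in J_i$. In the interior case $y_0\in J_j^*,\ x_0\in J_i^*$, necessarily $\tau^\ast\in(0,1)$ (because $\mathcal{E}_1(0,y_0)=\mathcal{E}_2(1,x_0)=+\infty$), so the choice $\tau^k\equiv\tau^\ast$ works by the continuity properties from Lemma~\ref{lem:cont-ei}. The boundary cases $y_0=0,\ x_0\in J_i^*$ (and symmetrically $x_0=0,\ y_0\in J_j^*$) are treated by letting $\tau^k$ depend on $\tau^\ast$: if $\tau^\ast=0$, take $\tau^k=|y^k|$, which yields $\mathcal{E}_1(|y^k|,y^k)=|y^k|(L_j(-1)-L_0(0))\to 0$ and $\mathcal{E}_2(|y^k|,x^k)\to\mathcal{E}_2(0,x_0)=\dzt^{ji}(0,x_0)$ by Lemma~\ref{lem:cont-ei}; if $\tau^\ast>0$, take $\tau^k\equiv\tau^\ast$ and use the continuity of the branch-$j$ formula $\tau L_j(-y_j/\tau)-\tau L_0(0)$ at $(\tau^\ast,0)$, whose value is exactly what defines the restriction $\dzt^{ji}$ at the limit point. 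At $(0,0)$ the hypothesis $j\in I_0$ or $i\in I_0$ is invoked: for instance, if $j\in I_0$, take $\tau^k=|y^k|$ so that $\mathcal{E}_1(|y^k|,y^k)\to 0$ and $\mathcal{E}_2(|y^k|,x^k)\to L_0(0)=\dzt^{ji}(0,0)$; the case $i\in I_0$ is symmetric.

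The main obstacle is that $\mathcal{E}_1(\tau,\cdot)$ is only lower semicontinuous (not continuous) at $y=0$: as $y^k\to 0$ in $J_j$ with $j\notin I_0$, $\mathcal{E}_1(\tau,y^k)\to \tau(L_j(0)-L_0(0))>0$, whereas the piecewise definition gives $\mathcal{E}_1(\tau,0)=0$. This upper-semicontinuity failure of $\mathcal{E}_1$ (and the analogous one for $\mathcal{E}_2$ at $x=0$) is what forces the dispatch between $\tau^\ast=0$, where a varying $\tau^k\to 0$ closes the gap, and $\tau^\ast>0$, where the branch-$j$ extension must be used; it is also the reason a hypothesis is required at $(0,0)$ to select a direction of degeneration compatible with the approximation.
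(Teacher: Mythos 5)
Your strategy coincides with the paper's: lower semicontinuity from Lemma~\ref{lem:lsc-ei} plus compactness of $[0,1]$, then upper semicontinuity by choosing a competitor $\tau^k$ in $\dzt^{ji}(y^k,x^k)\le \mathcal{E}_1(\tau^k,y^k)+\mathcal{E}_2(\tau^k,x^k)$; the l.s.c.\ part and the interior case are fine. The gap is in your boundary case $y_0=0$, $x_0\in J_i^*$ with $\tau^*>0$: you take $\tau^k\equiv\tau^*$ and assert that the branch-$j$ formula $\tau L_j(-y_j/\tau)-\tau L_0(0)$ at $(\tau^*,0)$ ``is exactly what defines the restriction $\dzt^{ji}$ at the limit point''. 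It is not: by definition $\mathcal{E}_1(\tau^*,0)=0$, whereas the branch-$j$ limit is $\tau^*(L_j(0)-L_0(0))$, strictly positive when $j\notin I_0$. Your argument therefore only gives $\limsup_k\dzt^{ji}(y^k,x^k)\le \tau^*(L_j(0)-L_0(0))+\mathcal{E}_2(\tau^*,x_0)$, which strictly exceeds $\dzt^{ji}(0,x_0)=\mathcal{E}_2(\tau^*,x_0)$. You correctly diagnose in your last paragraph that $\mathcal{E}_1$ fails to be u.s.c.\ at $y=0$ for $j\notin I_0$, but invoking the ``branch-$j$ extension'' does not repair this, since the value to be matched is the one given by the actual definition. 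A second, smaller slip: at $(0,0)$ your dispatch is backwards --- for $j\in I_0$ one must take $\tau^k=1-|x^k|\to 1$ (so that $\mathcal{E}_2(\tau^k,x^k)\to L_0(0)$ and $\mathcal{E}_1(\tau^k,y^k)\to L_j(0)-L_0(0)=0$), not $\tau^k=|y^k|$, which leaves $\mathcal{E}_2(\tau^k,x^k)\to L_i(0)>L_0(0)$ when $i\notin I_0$.

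You should also know that the first gap cannot be closed, because the statement is in fact too strong at exactly those points. Take $N=3$, $L_1(q)=q^2$, $L_2(q)=L_3(q)=q^2+1$, so $L_0(0)=0$, $I_0=\{1\}$, $\xi_2^+=1$. Then $\dzt^{32}(\eps e_3,\tfrac12 e_2)=1+(\tfrac12+\eps)^2\to \tfrac54$, while $\dzt^{32}(0,\tfrac12 e_2)=\inf_\tau \mathcal{E}_2(\tau,\tfrac12 e_2)=1$ (minimizer $\tau^*=\tfrac12>0$); so $\dzt^{32}$ is not u.s.c.\ at $(0,\tfrac12 e_2)$, a point of $(J_3\times J_2)\setminus\{(0,0)\}$. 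The paper's own proof shares this defect (its fixed choice $\tau^k=|y^k|\to 0$ matches $\dzt^{ji}(0,x)$ only when the minimizer is $\tau^*=0$). The point is that boundary minimizers satisfy $\tau^*>0$ (resp.\ $\tau^*<1$) precisely when $(0,x)$ (resp.\ $(y,0)$) lies in $\Delta^{ji}$, and what is actually used later (Case 2 of the proof of Lemmas~\ref{lemma:junc-1}--\ref{lemma:junc-3}) is only the continuity of $\dzt^{ji}$ on $(J_j\times J_i)\setminus\Delta^{ji}$; there $\tau^*=0$ or $\tau^*=1$ at the boundary and both your $\tau^*=0$ subcase and the paper's choices close the argument. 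So the correct repair is to weaken the statement to continuity on $(J_j\times J_i)\setminus\Delta^{ji}$ (plus the stated case at $(0,0)$), not to strengthen the approximation.
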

\begin{proof}[Proof of Lemma~\ref{lem::r101}.]
From Lemma~\ref{lem:lsc-ei}, we deduce that $\dzt^{ji}$ is 
lower semi-continuous on $J_j\times J_i$.  We now show that $\dzt^{ji}$ is
upper semi-continuous at any point $(y,x)\in (J_j\times J_i)\setminus
\{(0,0)\}$ and also at $(0,0)$ if $j\in I_0$ or $i\in I_0$. We first
consider $\tau \in [0,1]$ such that
$$
\dzt^{ji}(y,x) = \mathcal{E}_1(\tau,y) + \mathcal{E}_2(\tau,x) \quad
\mbox{with}\quad 0\le \tau\le 1.
$$
For any sequence $(y^k,x^k)\to (y,x)$ with $(y^k,x^k)\in
J_j\times J_i$, we want to show that
\begin{equation}\label{eq::r102}
\dzt^{ji}(y^k,x^k)\le  \dzt^{ji}(y,x) + o_k(1)
\end{equation}
Arguing as in Lemma~\ref{lem:dzun-cont}, we use 
$$
\dzt^{ji}(y^k,x^k)\le \mathcal{E}_1(\tau^k,y^k) + \mathcal{E}_2(\tau^k,x^k)
$$
and we choose $\tau^k$ as follows
$$ 
\begin{array}{ll}
\text{if } y\in J_j^*, x\in J_i^*: &  \tau^k=\tau \in (0,1), \\
\text{if } y=0, x\in J_i^*: & \tau^k = |y_k| \in [0,1), \\
\text{if } y\in J_j^*, x=0: & \tau^k=1-|x^k| \in (0,1], \\
\text{if } x=0, j\in I_0: & \tau^k=1-|x^k| \in (0,1], \\
\text{if } y=0, x=0, i\in I_0: & \tau^k=|y^k| \in [0,1).
\end{array}
$$
This ends the proof of the lemma.
\end{proof}
We next define for $(y,x) \in J^j \times J^i$ 
\begin{equation}\label{eq:def-dzd}
\dzd^{ji}(y,x)=- L'_j (\xi^-_j) y + L'_i (\xi^+_i) x + L_0 (0)
\end{equation}
where $\xi^\pm_l$ are defined thanks to the following function (for $l \in
I_N$) 
$$
K_l (\xi) = L_l (\xi) - \xi L'_l (\xi) - L_0 (0).
$$ 
Precisely, $\xi^\pm_l = (K_l^\pm)^{-1}(0) \neq 0$ when $l \notin I_0$ (see
Lemma~\ref{lem:k} below).  We will see that $K_l$ plays an important role
in the analysis of $\dzun$.  In particular, it allows us to define, when $i
\notin I_0$ and $j \notin I_0$, the following convex subset (triangle) of
$J_j \times J_i$:
$$
\Delta^{ji}=\left\{(y,x)\in J_j\times J_i, \quad \frac{x}{\xi^+_i} -
  \frac{y}{\xi^-_j} <1\right\}
$$
It is convenient to set $\Delta^{ji}=\emptyset$ if $i \in I_0$ or $j
\in I_0$. 
We next state a series of lemmas before proving them. 
\begin{lemma}[Link between $\dzun,\dzd,\dzt$]\label{lemma:junc-1}
\begin{equation}\label{eq:dzun-dzd-dzt}
\dzun^{ji}(y,x)=\left\{\begin{array}{ll}
    \dzd^{ji}(y,x) & \quad \mbox{if}\quad (y,x)\in \Delta^{ji}\\
    \dzt^{ji}(y,x) & \quad \mbox{if}\quad (y,x)\in (J_j\times
    J_i)\setminus \Delta^{ji}.
\end{array}\right.
\end{equation}
\end{lemma}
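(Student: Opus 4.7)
The plan is to exploit the separable structure of the objective defining $\dzun^{ji}$: setting $F(\tau_1,\tau_2)=\mathcal{E}_1(\tau_1,y)+\mathcal{E}_2(\tau_2,x)$, the two terms depend on disjoint variables, and each is convex (as a perspective of $L_j$ or $L_i$ plus an affine term). I will first minimize $F$ unconstrained on $[0,1]^2$ and then check when this minimizer lies in the admissible triangle $T=\{0\le\tau_1\le\tau_2\le 1\}$. The case $(y,x)\in\Delta^{ji}$ will correspond exactly to the unconstrained minimizer sitting strictly inside $T$, giving $\dzun^{ji}=\dzd^{ji}$; otherwise, convexity of $F$ will force the constrained minimum onto the active diagonal $\{\tau_1=\tau_2\}$, which by definition is $\dzt^{ji}$.

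For $y=y_je_j$, $x=x_ie_i$ with $y_j,x_i>0$, differentiation gives $\partial_{\tau_1}\mathcal{E}_1=K_j(-y_j/\tau_1)$ and $\partial_{\tau_2}\mathcal{E}_2=-K_i(x_i/(1-\tau_2))$. The identity $K'_l(\xi)=-\xi L''_l(\xi)$ together with $L''_l\ge\gamma>0$ shows that $K_l$ is unimodal with maximum $K_l(0)=L_l(0)-L_0(0)$; the two roots $\xi_l^-<0<\xi_l^+$ exist exactly when $l\notin I_0$. Thus when $j,i\notin I_0$, the unconstrained minimizer is $(\tau_1^*,\tau_2^*)=(-y_j/\xi_j^-,\;1-x_i/\xi_i^+)$. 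Using $K_l(\xi_l^\pm)=0$, which rewrites as $L_l(\xi_l^\pm)-L_0(0)=\xi_l^\pm L'_l(\xi_l^\pm)$, a short computation yields $F(\tau_1^*,\tau_2^*)=\dzd^{ji}(y,x)$. Moreover the inequality $\tau_1^*\le\tau_2^*$ is algebraically equivalent to $-y_j/\xi_j^-+x_i/\xi_i^+\le 1$, i.e., to $(y,x)\in\overline{\Delta^{ji}}$.

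On $\Delta^{ji}$ (strict inequality) the critical pair is interior to $T$ and therefore $\dzun^{ji}=\dzd^{ji}$. On the complement, the unconstrained minimizer lies outside $T$; since $F$ is convex and $T$ is convex, the minimum over $T$ is attained on $\partial T$. The edges $\{\tau_1=0\}$ and $\{\tau_2=1\}$ of $T$ give $F\equiv+\infty$ whenever $y\ne 0$ or $x\ne 0$ (by the very definition of $\mathcal{E}_1,\mathcal{E}_2$), so only the diagonal $\{\tau_1=\tau_2\}$ remains and minimizing there yields $\dzt^{ji}$. On the hypotenuse $\partial\Delta^{ji}$ one has $\tau_1^*=\tau_2^*$, so $\dzd^{ji}$ and $\dzt^{ji}$ automatically coincide and both pieces of the formula agree.

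The main obstacle is the book-keeping for the degenerate cases, namely $y=0$, $x=0$, or $j\in I_0$ or $i\in I_0$ (where one or both of $\xi_l^\pm$ are undefined and $\Delta^{ji}=\emptyset$ by convention). In each of these, one or both of $\mathcal{E}_1(\cdot,y),\mathcal{E}_2(\cdot,x)$ becomes monotone on $[0,1]$, so the unconstrained minimum sits at a corner like $(1,\tau_2^*)$, $(\tau_1^*,0)$, or $(1,0)$, which always violates $\tau_1\le\tau_2$ unless everything is trivially zero. The convexity argument of the previous paragraph then again puts the constrained minimum on the diagonal, in agreement with $\Delta^{ji}=\emptyset$ and $\dzun^{ji}=\dzt^{ji}$.
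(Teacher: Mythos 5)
Your proposal follows essentially the same route as the paper: both arguments minimize the separable, convex functional $G(\tau_1,\tau_2)=\mathcal{E}_1(\tau_1,y)+\mathcal{E}_2(\tau_2,x)$ over the triangle $\{0\le\tau_1\le\tau_2\le1\}$, use the first-order conditions $K_j(-y_j/\tau_1)=0$ and $K_i(x_i/(1-\tau_2))=0$ together with the unimodality of $K_l$ to identify the unconstrained critical pair with $(-y_j/\xi_j^-,\,1-x_i/\xi_i^+)$ when $i,j\notin I_0$, observe that this pair is interior to the triangle exactly when $(y,x)\in\Delta^{ji}$ (giving $\dzd^{ji}$), and otherwise push the constrained minimum onto the diagonal (giving $\dzt^{ji}$) because the edges $\{\tau_1=0\}$ and $\{\tau_2=1\}$ carry the value $+\infty$. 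The interior computations, including the verification that the critical value equals $\dzd^{ji}(y,x)$ via $K_l(\xi_l^\pm)=0$, are correct and coincide with the paper's Steps 1 and 3.

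The one concrete misstep is in your last paragraph on degenerate cases. You assert that $y=0$ or $x=0$ leads to ``$\Delta^{ji}=\emptyset$ and $\dzun^{ji}=\dzt^{ji}$,'' but $\Delta^{ji}$ is empty only when $i\in I_0$ or $j\in I_0$; when $i,j\in I_N\setminus I_0$ the set $\Delta^{ji}$ contains boundary points such as $(0,x)$ with $0<x_i<\xi_i^+$, and there the lemma asserts $\dzun^{ji}=\dzd^{ji}$, not $\dzt^{ji}$. Moreover at $y=0$ the function $\mathcal{E}_1(\cdot,0)\equiv 0$ is constant, so the unconstrained minimizer does not ``sit at a corner violating $\tau_1\le\tau_2$''; on the contrary it can be placed inside the triangle. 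Your conclusion happens to be numerically harmless at such points because $\mathcal{E}_1(\cdot,0)\equiv0$ makes the diagonal constraint costless, so $\dzt^{ji}(0,x)=\dzd^{ji}(0,x)$ for $x_i\le\xi_i^+$ (and symmetrically on $J_j\times\{0\}$), but your argument as written does not establish the displayed formula on $\Delta^{ji}\cap\partial(J_j\times J_i)$. The cleanest repair is the one the paper uses: prove the identity $\dzun^{ji}=\dzd^{ji}$ on $\Delta^{ji}\cap(J_j^*\times J_i^*)$ and extend it to all of $\Delta^{ji}$ by continuity of $\dzun^{ji}$ and $\dzd^{ji}$ (and likewise use the continuity of $\dzt^{ji}$ on $(J_j\times J_i)\setminus\Delta^{ji}$, established separately, to pass to the boundary in the complementary region). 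Alternatively, keep your direct approach but record the coincidence $\dzd^{ji}=\dzt^{ji}$ on those boundary segments explicitly.
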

\begin{lemma}[The equations in the interior]\label{lemma:junc-2}
The functions $\dzun^{ji}$, $\dzd^{ji}$ and $\dzt^{ji}$ are
convex and $C^1$ in $J_j^*\times J_i^*$  and, if
$\tilde{\dz}$ is one of them, it satisfies for $(y,x)\in J_j^*\times
J_i^*$
\begin{equation}\label{eq::r99}
  \left\{\begin{array}{l}
      \tilde{\dz}(y,x)-x\partial_x \tilde{\dz}(y,x) - y\partial_y \tilde{\dz}(y,x) + H_i(\partial_x \tilde{\dz}(y,x))=0,\\
      \tilde{\dz}(y,x)-x\partial_x \tilde{\dz}(y,x) - y\partial_y \tilde{\dz}(y,x) + H_j(-\partial_y \tilde{\dz}(y,x))=0.
\end{array}\right.
\end{equation}
\end{lemma}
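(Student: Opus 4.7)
The plan is to treat the three functions in turn, with $\dzd^{ji}$ being immediate, $\dzt^{ji}$ requiring perspective-function arguments combined with the envelope theorem, and $\dzun^{ji}$ following by gluing these two via Lemma~\ref{lemma:junc-1}.

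First, $\dzd^{ji}$ is affine in the coordinates $(y_j,x_i)$, hence trivially convex and $C^1$, with $\partial_y^j \dzd^{ji} = -L'_j(\xi_j^-)$ and $\partial_x^i \dzd^{ji} = L'_i(\xi_i^+)$. A direct computation gives $\dzd^{ji} - y\partial_y \dzd^{ji} - x\partial_x \dzd^{ji} = L_0(0)$. Using the Legendre identity $H_i(L'_i(\xi)) = \xi L'_i(\xi) - L_i(\xi)$, the defining relation $K_i(\xi_i^+) = 0$ (i.e.\ $L_i(\xi_i^+) - \xi_i^+ L'_i(\xi_i^+) = L_0(0)$) yields $L_0(0) + H_i(L'_i(\xi_i^+)) = 0$; the second equation of \eqref{eq::r99} is symmetric, using $K_j(\xi_j^-) = 0$.

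Second, for $\dzt^{ji}$ the key point is that $(\tau,y_j) \mapsto \tau L_j(-y_j/\tau)$ and $(\tau,x_i) \mapsto (1-\tau)L_i(x_i/(1-\tau))$ are perspectives of convex functions, hence jointly convex on $\{\tau>0\}$ and $\{\tau<1\}$ respectively. After the $\pm\tau L_0(0)$ terms cancel, $\dzt^{ji}(y,x) = \inf_{\tau\in[0,1]} [\tau L_j(-y_j/\tau) + (1-\tau)L_i(x_i/(1-\tau))]$ is a partial infimum of a jointly convex function, so convex in $(y,x)$. For $(y,x) \in J_j^* \times J_i^*$, the coercivity bound $L_l(p) \ge \tfrac{\gamma}{4}p^2 - C_0$ from \eqref{eq::r50} confines any minimizer to $(0,1)$, and strict convexity $L_l'' \ge \gamma$ (assumption (A1)) together with the implicit function theorem applied to the first-order condition $K_j(-y_j/\tau^*) = K_i(x_i/(1-\tau^*))$ shows that $\tau^*$ is uniquely determined and depends smoothly on $(y,x)$. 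The envelope theorem then yields $\partial_y^j \dzt^{ji} = -L'_j(q_j)$ and $\partial_x^i \dzt^{ji} = L'_i(q_i)$ with $q_j = -y_j/\tau^*$ and $q_i = x_i/(1-\tau^*)$. Using the first-order condition $K_j(q_j) = K_i(q_i)$ and the identity $L_l(\xi) - \xi L'_l(\xi) = K_l(\xi) + L_0(0)$, one computes
$$
\dzt^{ji} - y\partial_y \dzt^{ji} - x\partial_x \dzt^{ji}
= \tau^* K_j(q_j) + (1-\tau^*) K_i(q_i) + L_0(0)
= L_i(q_i) - q_i L'_i(q_i) = L_j(q_j) - q_j L'_j(q_j),
$$
and both equations in \eqref{eq::r99} follow from the Legendre identity.

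Finally, for $\dzun^{ji}(y,x) = \inf_{0 \le \tau_1 \le \tau_2 \le 1}[\mathcal{E}_1(\tau_1,y) + \mathcal{E}_2(\tau_2,x)]$, convexity is immediate since the objective is jointly convex in $(\tau_1,\tau_2,y,x)$ and the domain is convex. For regularity, I distinguish whether the constraint $\tau_1 \le \tau_2$ is active at the (unique) minimizer. If it is inactive, the uncoupled first-order conditions give $-y_j/\tau_1^* = \xi_j^-$ and $x_i/(1-\tau_2^*) = \xi_i^+$; by Lemma~\ref{lemma:junc-1} this is exactly the region $\Delta^{ji}$, where $\dzun^{ji} = \dzd^{ji}$. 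If it is active, $\tau_1^* = \tau_2^* = \tau^*$ and $\dzun^{ji} = \dzt^{ji}$. At a point of $\partial \Delta^{ji} \cap (J_j^*\times J_i^*)$ both formulas coincide, and the common values of $q_j$ and $q_i$ on the boundary are precisely $\xi_j^-$ and $\xi_i^+$, so the gradients of $\dzt^{ji}$ and $\dzd^{ji}$ match there, delivering the $C^1$ pasting. Equations \eqref{eq::r99} for $\dzun^{ji}$ then follow branch-by-branch from those already established for $\dzd^{ji}$ and $\dzt^{ji}$. The main obstacle is the smoothness of $\tau^*$ (via strict convexity and the implicit function theorem) and the $C^1$ pasting across $\partial \Delta^{ji}$; the remainder is careful bookkeeping with $K_l$ and the Legendre transform.
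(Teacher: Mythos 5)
Your proposal is correct and follows essentially the same route as the paper's proof of Lemmas~\ref{lemma:junc-1}--\ref{lemma:junc-3}: joint convexity of $(\tau_1,\tau_2,y,x)\mapsto \mathcal{E}_1(\tau_1,y)+\mathcal{E}_2(\tau_2,x)$ (which the paper obtains by an explicit Hessian computation rather than by citing the perspective-function fact), uniqueness and $C^1$ dependence of the minimizer via strict convexity in $\tau$ and the implicit function theorem applied to $K_j(-y/\tau)=K_i(x/(1-\tau))$, verification of \eqref{eq::r99} through the identity $K_l(\xi)=-H_l(L_l'(\xi))-L_0(0)$, and the $C^1$ pasting of $\dzd^{ji}$ and $\dzt^{ji}$ across $\partial\Delta^{ji}$ where both gradients are determined by $(\xi_j^-,\xi_i^+)$. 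No substantive difference or gap.
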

\begin{lemma}[Study of $\dzt$]\label{lemma:junc-3}
For $(y,x)\in J_j^*\times J_i^*$, there exists a unique $\tau=T(y,x)\in (0,1)$ such that
$$
\dzt^{ji}(y,x)=\mathcal{E}_1(\tau,y)+\mathcal{E}_2(\tau,x);
$$
Moreover,
$$
\left\{\begin{array}{ll}  \partial_x \dzt^{ji}(y,x)
    =L_i'\left(\xi_x\right)
    &\quad \mbox{with}\quad  \xi_x = \frac{x}{1-T(y,x)},\\
     \partial_y \dzt^{ji}(y,x)
    =-L_j'\left(\xi_y\right)&\quad \mbox{with}\quad 
    \xi_y = -\frac{y}{T(y,x)}.
\end{array}\right.
$$ 
\end{lemma}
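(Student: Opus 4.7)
For $(y,x)\in J_j^*\times J_i^*$, write $y=y_je_j$ and $x=x_ie_i$ with $y_j,x_i>0$, and consider
\begin{equation*}
G(\tau) := \mathcal{E}_1(\tau,y)+\mathcal{E}_2(\tau,x) = \tau L_j\!\bigl(-y_j/\tau\bigr) + (1-\tau)\,L_i\!\bigl(x_i/(1-\tau)\bigr), \qquad \tau\in(0,1),
\end{equation*}
the two terms $\pm\tau L_0(0)$ cancelling. The coercivity of $L_i,L_j$ (a consequence of (A1)) together with $y_j,x_i\neq 0$ gives $G(\tau)\to+\infty$ as $\tau\to 0^+$ and as $\tau\to 1^-$, so any minimiser of $G$ on $[0,1]$ must lie in the open interval $(0,1)$. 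A direct differentiation yields
\begin{equation*}
G''(\tau) = \frac{y_j^2}{\tau^3}\, L_j''\!\Bigl(-\frac{y_j}{\tau}\Bigr) + \frac{x_i^2}{(1-\tau)^3}\, L_i''\!\Bigl(\frac{x_i}{1-\tau}\Bigr) \;\geq\; \gamma\left(\frac{y_j^2}{\tau^3}+\frac{x_i^2}{(1-\tau)^3}\right) \;>\; 0,
\end{equation*}
using $L_i'',L_j''\geq \gamma$ from (A1). Thus $G$ is strictly convex on $(0,1)$ and admits a unique minimiser $T(y,x)\in(0,1)$, characterised by $G'(T(y,x))=0$.

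For the derivatives, regard $G$ as a function $G(\tau,y_j,x_i)$ of three variables. Since $\partial^2_{\tau\tau}G(T(y,x),y_j,x_i)>0$, the implicit function theorem applied to $\partial_\tau G=0$ shows that $T$ is of class $C^1$ on $J_j^*\times J_i^*$. Then, since $\dzt^{ji}(y,x)=G(T(y,x),y_j,x_i)$, the chain rule combined with first-order optimality $\partial_\tau G(T,y,x)=0$ (the envelope principle) gives
\begin{equation*}
\partial_{x_i}\dzt^{ji}(y,x) = \partial_{x_i} G\bigl(T(y,x),y_j,x_i\bigr) = L_i'\!\Bigl(\tfrac{x_i}{1-T(y,x)}\Bigr),
\end{equation*}
and similarly $\partial_{y_j}\dzt^{ji}(y,x) = -L_j'(-y_j/T(y,x))$. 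With $\xi_x=x_i/(1-T)$ and $\xi_y=-y_j/T$, these are the stated formulas.

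The substance of the proof is concentrated in the strict-convexity computation; once $T$ is established as the unique interior minimiser the derivative formulas follow routinely from the envelope theorem. The only mild point of care is verifying the endpoint blow-up of $G$, which is what rules out boundary minimisers at $\tau=0$ or $\tau=1$ and thereby legitimates both the first-order optimality condition and the use of the implicit function theorem to obtain $C^1$-regularity of $T(y,x)$.
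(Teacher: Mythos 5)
Your proof is correct and follows essentially the same route as the paper: the paper likewise reduces to the one-variable function $e(\tau,y,x)=\mathcal{E}_1(\tau,y)+\mathcal{E}_2(\tau,x)$, shows $D^2_{\tau\tau}e=\frac{y^2}{\tau^3}L_j''+\frac{x^2}{(1-\tau)^3}L_i''>0$ to get a unique interior minimiser, applies the implicit function theorem to the first-order condition (which the paper writes as $K_j(-y/\tau)=K_i(x/(1-\tau))$, i.e.\ exactly your $G'(\tau)=0$), and obtains the gradient formulas by the envelope argument. The only cosmetic difference is that the paper phrases the critical-point equation in terms of the functions $K_l$, which it reuses in later lemmas.
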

\begin{lemma}[Study of $T$]\label{lem::r111}
  For $(y,x)\in (J_j\times J_i)\setminus \left\{(0,0)\right\}$, there is a
  unique $\tau=T(y,x)\in [0,1]$ such that
$$
\dzt^{ji}(y,x)=\mathcal{E}_1(\tau,y)+\mathcal{E}_2(\tau,x).
$$
Moreover  $T\in C(J_j\times J_i\setminus \left\{(0,0)\right\})$ and 
$$
T(y,x)= 
\begin{cases}
  \max\left(0,1-\frac{x}{\xi_i^+}\right) & \mbox{if}\quad
    (y,x)\in (\left\{0\right\} \times J_i^*) \setminus
    \Delta^{ji}, \\
    \min\left(1,-\frac{y}{\xi_j^-}\right) & 
  \mbox{if}\quad (y,x)\in (J_j^* \times \left\{0\right\})\setminus
  \Delta^{ji}.
\end{cases}
$$
\end{lemma}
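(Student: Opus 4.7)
The plan is to combine the interior analysis of Lemma~\ref{lemma:junc-3} with a direct treatment of the boundary cases $y=0$ or $x=0$. Existence and uniqueness in the interior $(y,x) \in J_j^* \times J_i^*$ is already provided by Lemma~\ref{lemma:junc-3}, which pins down a unique $T(y,x) \in (0,1)$. For the boundary $y=0$, $x \in J_i^*$ (symmetrically, $x=0$, $y \in J_j^*$), the key observation is that $\mathcal{E}_1(\tau, 0) \equiv 0$ by definition, so the objective collapses to $\tau \mapsto \mathcal{E}_2(\tau, x)$, which is strictly convex on $[0,1)$ by the perspective construction applied to the uniformly convex $L_i$ (assumption (A1)). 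The unique minimizer of this one-variable strictly convex function then gives the unique $T(0,x) \in [0,1]$.

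For the explicit formulas, I would compute $\partial_\tau \mathcal{E}_2(\tau, x) = -K_i(x/(1-\tau))$. Since $K_i$ attains its maximum $K_i(0) = L_i(0) - L_0(0) > 0$ at $0$ when $i \notin I_0$ and admits a unique positive zero $\xi_i^+$, the unconstrained critical point sits at $\tau_c = 1 - x/\xi_i^+$. This lies in $[0,1)$ exactly when $x \le \xi_i^+$, in which case $T(0,x) = \tau_c$; otherwise $\mathcal{E}_2(\cdot,x)$ is strictly increasing on $[0,1)$ and the minimum is attained at the boundary $\tau=0$. Writing this uniformly as $T(0,x) = \max(0, 1-x/\xi_i^+)$, the stated formula holds on $(\{0\} \times J_i^*) \setminus \Delta^{ji}$ where, by definition of $\Delta^{ji}$, we have $x \ge \xi_i^+$ and the max is active. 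The case $x=0$, $y \in J_j^*$ is treated symmetrically using $K_j$ and its negative zero $\xi_j^-$.

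For the continuity of $T$ on $(J_j \times J_i) \setminus \{(0,0)\}$: in the interior, continuity follows from Lemma~\ref{lemma:junc-3} and the implicit characterization $K_j(-y/\tau) = K_i(x/(1-\tau))$ of $T(y,x)$. At boundary points, I would argue by compactness: given $(y^k, x^k) \to (0, x_0)$ with $x_0 \in J_i^*$, extract a subsequential limit $\tau_\infty$ of $T(y^k, x^k) \in [0,1]$; combining the upper semi-continuity of $\dzt^{ji}$ provided by Lemma~\ref{lem::r101} with the lower semi-continuity of $\mathcal{E}_1 + \mathcal{E}_2$ forces $\tau_\infty$ to be a minimizer of the appropriate limiting functional, and uniqueness of the minimizer identifies $\tau_\infty$ with $T(0, x_0)$. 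The main obstacle is that $\mathcal{E}_1(\tau, \cdot)$ is not jointly continuous at $y=0$ when $j \notin I_0$, since its pointwise limit as $y \to 0^+$ is $\tau(L_j(0) - L_0(0)) > 0$ rather than $0$. The resolution is to exploit the structure of $\Delta^{ji}$: outside of $\Delta^{ji}$, the boundary constraint $\tau = 0$ (or $\tau = 1$) at $(0,x_0)$ (or $(y_0,0)$) is active, and this activation persists under small perturbations, forcing $T(y^k, x^k)$ to track the correct boundary value and yielding the stated explicit formulas together with continuity at these points.
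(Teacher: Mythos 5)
Your proof is correct and follows essentially the same route as the paper's: uniqueness in the interior is taken from Lemma~\ref{lemma:junc-3}, the boundary cases are reduced to a one-variable strictly convex minimization whose critical point is located through the sign of $K_i$ (resp.\ $K_j$), and continuity of $T$ is obtained from the lower semi-continuity of $\mathcal{E}_1+\mathcal{E}_2$, the continuity of $\dzt^{ji}$ away from $(0,0)$ (Lemma~\ref{lem::r101}) and the uniqueness of the minimizer. Two small remarks: the concern in your last paragraph about the joint discontinuity of $\mathcal{E}_1$ at $y=0$ is already fully disposed of by the semi-continuity sandwich you describe just before it (no appeal to ``persistence of active constraints'' is needed, and that sketch as written would not be a proof), and the parenthetical claim that $x\ge\xi_i^+$ on $(\{0\}\times J_i^*)\setminus\Delta^{ji}$ fails when $\Delta^{ji}=\emptyset$ (e.g.\ $j\in I_0$, $i\notin I_0$), although the formula $\max(0,1-x/\xi_i^+)$ you derive remains valid there.
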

\begin{lemma}[$\dzun^{ji}$ at the boundary]\label{lem::r91}
  Then we have $\dzun^{ji}\in C^1(J_j\times J_i)$ with
\begin{equation}\label{eq::r106}
\left\{\begin{array}{l}
 \partial_x \dzun^{ji}(y,x) =L_i'\left(\xi_x\right)\\
  \partial_y \dzun^{ji}(y,x) =-L_j'\left(\xi_y\right)
\end{array}\right.
\end{equation}
where $\xi_y\le 0\le \xi_x$ satisfy
\begin{equation}\label{eq::r109}
(\xi_x,\xi_y)=  \begin{cases}
      (\max(x,\xi_i^+), (K_j^-)^{-1}(K_i(\xi_x))) &
       \text{ if } (y,x)\in (\left\{0\right\} \times J_i) \setminus \Delta^{ji}\\
      (\xi_i^+, \xi_j^-) &
       \text{ if } (y,x)\in (\left\{0\right\} \times J_i) \cap \Delta^{ji}\\
      ((K_i^+)^{-1}(K_j(\xi_y)), -\max(y,-\xi_j^-)) 
      & \text{ if } (y,x)\in  (J_j \times \left\{0\right\})\setminus \Delta^{ji}\\
      ( \xi_i^+, \xi_j^-)
      & \text{ if } (y,x)\in  (J_j \times \left\{0\right\})\cap  \Delta^{ji}.
\end{cases}
\end{equation}
Moreover we have
\begin{equation}\label{eq::r116bis}
\left\{\begin{array}{ll}
 \dzun^{ji}(0,x)=\frac{x}{\xi_x}\left(L_i(\xi_x)-L_0(0)\right)+L_0(0) & \quad \mbox{for}\quad x\in J_i^*\\
 \dzun^{ji}(y,0)=-\frac{y}{\xi_y}\left(L_j(\xi_y)-L_0(0)\right)+L_0(0) & \quad \mbox{for}\quad y\in J_j^*
\end{array}\right.
\end{equation}
and 
\begin{multline}\label{eq::r115}
\dzun^{ji}(x,y)-x\partial_x\dzun^{ji}(x,y)-y\partial_y\dzun^{ji}(x,y)  
\\=\left\{\begin{array}{ll}
L_0(0)+ K_i(\max(x,\xi_i^+)) &\quad \mbox{if}\quad (y,x)\in \left\{0\right\}\times J_i,\\
L_0(0)+ K_j(-\max(y,-\xi_j^-)) &\quad \mbox{if}\quad (y,x)\in  J_j\times\left\{0\right\}.
\end{array}\right.
\end{multline}
\end{lemma}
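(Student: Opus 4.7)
The plan is to leverage the decomposition established in Lemma~\ref{lemma:junc-1}, namely that $\dzun^{ji}$ coincides with the affine function $\dzd^{ji}$ on the open triangle $\Delta^{ji}$ and with $\dzt^{ji}$ on its complement in $J_j\times J_i$. The interior $C^1$ regularity on $J_j^*\times J_i^*$ was already provided by Lemma~\ref{lemma:junc-2}, so the real work is at the boundary $\{y=0\}\cup\{x=0\}$. I treat the case $y=0$; the case $x=0$ is symmetric. The three tasks are: prove (\ref{eq::r116bis}) for $\dzun^{ji}(0,x)$, identify the one-sided partials (\ref{eq::r106})--(\ref{eq::r109}), and verify (\ref{eq::r115}); the $C^1$ matching across $\partial\Delta^{ji}$ will come for free from the envelope relations.

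When $(0,x)\in\Delta^{ji}$, formula (\ref{eq:def-dzd}) gives directly $\partial_x\dzd^{ji}=L_i'(\xi_i^+)$ and $\partial_y\dzd^{ji}=-L_j'(\xi_j^-)$, matching the claimed values $(\xi_x,\xi_y)=(\xi_i^+,\xi_j^-)$. When $(0,x)\in(\{0\}\times J_i)\setminus\Delta^{ji}$, the fact that $\mathcal{E}_1(\tau,0)=0$ reduces definition (\ref{eq:def-dzt}) to
\[
\dzun^{ji}(0,x)=\inf_{\tau\in[0,1)}\Big\{(1-\tau)L_i\!\left(\tfrac{x}{1-\tau}\right)+\tau L_0(0)\Big\}.
\]
With the change of variable $\xi=x/(1-\tau)\in[x,\infty)$, this equals $L_0(0)+\inf_{\xi\ge x}\{(x/\xi)(L_i(\xi)-L_0(0))\}$, whose derivative in $\xi$ is $-xK_i(\xi)/\xi^2$. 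Since $K_i$ is strictly decreasing on $[0,\infty)$ with unique root $\xi_i^+$ (equal to $0$ when $i\in I_0$, i.e.\ when $\Delta^{ji}=\emptyset$), the minimum on $[x,\infty)$ is attained at $\xi_x=\max(x,\xi_i^+)$; substituting produces the first line of (\ref{eq::r116bis}). Differentiating this explicit expression in $x$ and invoking the envelope theorem to ignore the $x$-dependence of $\xi_x$ yields $\partial_x\dzun^{ji}(0,x)=L_i'(\xi_x)$.

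For $\partial_y\dzun^{ji}(0,x)$ I pass to the limit from the interior. For $y\in J_j^*$ small, Lemma~\ref{lemma:junc-3} gives $\partial_y\dzt^{ji}(y,x)=-L_j'(\xi_y)$ with $\xi_y=-y/T(y,x)$ and $\xi_x=x/(1-T(y,x))$, and differentiating the objective $\tau L_j(\xi_y)+(1-\tau)L_i(\xi_x)$ in $\tau$ at the optimum produces the envelope identity $K_j(\xi_y)=K_i(\xi_x)$. Using Lemma~\ref{lem::r111} to track $T$ as $y\to0^+$, one obtains $\xi_x\to\max(x,\xi_i^+)$ and hence $\xi_y\to(K_j^-)^{-1}(K_i(\xi_x))$, the unique nonpositive preimage, which is exactly (\ref{eq::r109}). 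The matching across $\partial\Delta^{ji}$ is automatic: on the locus $x/\xi_i^+-y/\xi_j^-=1$, the relations $1-T=x/\xi_i^+$ and $T=-y/\xi_j^-$ force $(\xi_x,\xi_y)=(\xi_i^+,\xi_j^-)$, matching the affine side. Finally, (\ref{eq::r115}) is a direct computation from (\ref{eq::r116bis}): $\dzun^{ji}(0,x)-x\partial_x\dzun^{ji}(0,x)=(x/\xi_x)K_i(\xi_x)+L_0(0)$, which equals $L_0(0)+K_i(\max(x,\xi_i^+))$ since either $\xi_x=x$ (when $x\ge\xi_i^+$) or $K_i(\xi_x)=K_i(\xi_i^+)=0$. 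The main obstacle is precisely the limit defining $\xi_y$ at the corner $(0,x)$ with $x\ge\xi_i^+$, where both $y$ and $T$ go to zero so that $\xi_y=-y/T$ is a $0/0$ indeterminate form; the envelope identity $K_j(\xi_y)=K_i(\xi_x)$ is what pins down the limit and guarantees the continuous extension of the gradient up to the boundary.
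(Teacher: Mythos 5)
Your route is essentially the paper's: both rely on the decomposition of Lemma~\ref{lemma:junc-1} into $\dzd^{ji}$ on $\Delta^{ji}$ and $\dzt^{ji}$ outside, obtain the boundary values \eqref{eq::r116bis} from the one-dimensional minimisation in $\tau$, and recover the ``normal'' derivative at the boundary by passing to the limit in the interior optimality relation $K_j(\xi_y)=K_i(\xi_x)$ with the help of Lemma~\ref{lem::r111}. One step, however, fails as written: the envelope argument for $\partial_x\dzun^{ji}(0,x)$ in the regime $x\ge\xi_i^+$, where the minimiser of $\xi\mapsto \frac{x}{\xi}\left(L_i(\xi)-L_0(0)\right)$ over $[x,+\infty)$ sits on the \emph{moving} boundary $\xi=x$ of the feasible set. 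Ignoring the $x$-dependence of $\xi_x$ there yields $\frac{1}{x}\left(L_i(x)-L_0(0)\right)$, which equals $L_i'(x)$ only when $K_i(x)=0$, i.e.\ only at $x=\xi_i^+$. The stated conclusion is still correct, because in that regime the optimal $\tau$ is $0$ and $\dzun^{ji}(0,x)=L_i(x)$ identically, so one should simply differentiate that expression (or use the constrained envelope formula, whose multiplier $-K_i(x)/x\ge 0$ exactly accounts for the discrepancy); but the derivation as you wrote it produces the wrong number.

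The second, more substantive omission is the corner $(0,0)$, which the claim $\dzun^{ji}\in C^1(J_j\times J_i)$ and formula \eqref{eq::r109} do cover. When $i\in I_0$ or $j\in I_0$ one has $(0,0)\notin\Delta^{ji}$, and for interior sequences $(y^k,x^k)\to(0,0)$ the quantities $\xi_x^k=x^k/(1-\tau^k)$ and $\xi_y^k=-y^k/\tau^k$ are \emph{both} indeterminate simultaneously; the single relation $K_j(\xi_y)=K_i(\xi_x)$ no longer pins down the limit. The paper's proof devotes a separate case to this: after extracting a convergent subsequence and establishing the uniform bound \eqref{eq::r110}, it shows that at least one of the two limits must vanish and then uses the ordering of $L_i(0)$ and $L_j(0)$ relative to $L_0(0)$ to conclude that $(\xi_x,\xi_y)=(\xi_i^+,\xi_j^-)$. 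Your face-by-face treatment ($y=0$ with $x\in J_i^*$, then ``the case $x=0$ is symmetric'') only controls the gradient along each face and its limit from the interior at fixed $x\in J_i^*$ (resp.\ $y\in J_j^*$); it does not handle sequences approaching $(0,0)$ from the interior of $J_j^*\times J_i^*$, which is needed for the full $C^1$ statement.
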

Before proving these lemmas, the reader can check the following useful
properties of the function $K_l$ that will be used in their proofs.
\begin{lemma}[Properties of $K_l$]\label{lem:k}
  Assume (A1). Then for any $l\in I_N$, we have
\begin{align*}
K_l'(\xi)\ge \gamma |\xi|\quad &\mbox{for}\quad \xi\in (-\infty,0),\\
K_l'(\xi) \le -\gamma |\xi|\quad &\mbox{for}\quad \xi\in (0,+\infty).
\end{align*}
We define $(K_l^-)^{-1}$ as the inverse of the function $K_l$ restricted to $(-\infty,0]$,
and $(K_l^+)^{-1}$ as the inverse of the function $K_l$ restricted to $[0,+\infty)$.
We set
$$
\xi_l^\pm = (K_l^\pm)^{-1}(0).
$$
Then we have
\begin{align*}
\pm \xi_l^\pm =0 \quad &\mbox{if}\quad l\in I_0,\\
\pm \xi_l^\pm >0 \quad &\mbox{if}\quad l\in I_N\setminus I_0.
\end{align*}
Moreover we have
\begin{equation}\label{eq::r98}
K_l (\xi)=-H_l (L'_l (\xi)) -L_0 (0).
\end{equation}
\end{lemma}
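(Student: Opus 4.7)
The plan is to prove each assertion by direct computation, exploiting the uniform convexity of $L_l$ guaranteed by (A1). The starting observation is that differentiating $K_l(\xi) = L_l(\xi) - \xi L_l'(\xi) - L_0(0)$ produces cancellation:
\[
K_l'(\xi) = L_l'(\xi) - L_l'(\xi) - \xi L_l''(\xi) = -\xi L_l''(\xi).
\]
Combined with the bound $L_l'' \ge \gamma$ from (A1), this immediately yields $K_l'(\xi) \ge \gamma |\xi|$ on $(-\infty,0)$ and $K_l'(\xi) \le -\gamma|\xi|$ on $(0,+\infty)$, which is the first claim.

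Next I would use these derivative bounds to control the global shape of $K_l$. The bounds imply that $K_l$ is strictly increasing on $(-\infty,0]$, strictly decreasing on $[0,+\infty)$, and, upon integration, that $K_l(\xi)\to -\infty$ as $|\xi|\to\infty$. Hence $K_l$ attains its maximum at $\xi=0$ with value $K_l(0) = L_l(0)-L_0(0)$, which by the ordering (\ref{eq::r26}) is $\ge 0$, with equality precisely when $l\in I_0$. Strict monotonicity on each half-line then guarantees that $(K_l^-)^{-1}$ and $(K_l^+)^{-1}$ are well defined on $(-\infty, K_l(0)]$, so the roots $\xi_l^\pm$ of $K_l$ exist uniquely on each side. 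If $l\in I_0$ then $K_l(0)=0$ forces $\xi_l^-=\xi_l^+=0$; if $l\notin I_0$ then $K_l(0)>0$, so by strict monotonicity $\xi_l^-<0<\xi_l^+$, i.e.\ $\pm\xi_l^\pm>0$.

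For the final identity (\ref{eq::r98}), I would invoke the Legendre duality between $L_l$ and $H_l$: since $L_l$ is strictly convex and $C^2$, the supremum in $H_l(p)=\sup_q(pq-L_l(q))$ is attained at the unique $q$ with $L_l'(q)=p$. Plugging in $p=L_l'(\xi)$ gives the attainment at $q=\xi$, so $H_l(L_l'(\xi)) = \xi L_l'(\xi) - L_l(\xi)$, and rearranging produces exactly $K_l(\xi) = -H_l(L_l'(\xi)) - L_0(0)$.

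There is no real obstacle here: the whole lemma is a bookkeeping exercise around the identity $K_l' = -\xi L_l''$ and the definition of the Legendre transform, with (A1) supplying the uniform convexity needed to conclude strict monotonicity and the sign of $\xi_l^\pm$.
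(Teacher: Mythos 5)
Your proof is correct and complete; the computation $K_l'(\xi)=-\xi L_l''(\xi)$, the resulting monotonicity/coercivity of $K_l$ with maximum $K_l(0)=L_l(0)-L_0(0)\ge 0$ (zero exactly when $l\in I_0$), and the Legendre attainment identity $H_l(L_l'(\xi))=\xi L_l'(\xi)-L_l(\xi)$ are precisely the intended argument. The paper explicitly leaves this verification to the reader, so there is no alternative proof to compare against; yours supplies the standard one.
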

\begin{proof}[Proof of Lemmas~\ref{lemma:junc-1}-\ref{lemma:junc-3}.]
The proof proceeds in several steps. \medskip

\noindent \textbf{Step 1: first study of $\dzun^{ji}$.}
Let us define
$$
G(\tau_1,\tau_2,y,x)=\mathcal{E}_1(\tau_1,y)+\mathcal{E}_2(\tau_2,x).
$$
For $\tau_1,\tau_2\in (0,1)$, and setting
\begin{equation}\label{eq::r97}
\xi_y = -\frac{y}\tau_1,\quad \xi_x = \frac{x}{1-\tau_2}
\end{equation}
and $V_y=(\xi_y,0,1,0)$ and $V_x=(0,\xi_x,0,1)$, we compute
$$
D^2 G (\tau_1,\tau_2,y,x)= \frac{L_j''(\xi_y)}{\tau_1}V_y^T V_y +
\frac{L_i''(\xi_x)}{1-\tau_2}V_x^T V_x \ge 0.
$$
Therefore $G$ is in particular convex on $(0,1)\times (0,1)\times
J_j^*\times J_i^*$.  Because $G$ is in particular lower
semi-continuous on $[0,1]\times [0,1]\times J_j^*\times J_i^*$, and
\begin{equation}\label{eq::r96}
  G(0,\tau_2,y,x)=+\infty=G(\tau_1,1,y,x)\quad \text{ for } (y,x)\in J_j^*\times J_i^*,
\end{equation} 
we deduce that
$$
\dzun^{ji}(y,x)=\inf_{0< \tau_1\le \tau_2< 1}
G(\tau_1,\tau_2,y,x)\quad \mbox{for}\quad (y,x)\in J_j^*\times J_i^*.
$$
This implies that $\dzun^{ji}$ is also convex in $J_j^*\times J_i^*$.
Notice that in particular
$$
D^2_{\tau_1\tau_1}G(\tau_1,\tau_2,y,x) = \frac{y^2}{\tau_1^3} L''_j
(\xi_y)>0
$$
and
$$
D^2_{\tau_2\tau_2}G(\tau_1,\tau_2,y,x) =  \frac{x^2}{(1-\tau_2)^3}
L''_i (\xi_x)>0.
$$
The map $(\tau_1,\tau_2)\mapsto G(\tau_1,\tau_2,y,x)$ is then
strictly convex on the convex set 
$$
\left\{(\tau_1,\tau_2)\in  (0,1)^2,\quad \tau_1\le \tau_2\right\}.
$$ 
Therefore using again (\ref{eq::r96}) and the lower semi-continuity of
$G$, we deduce that it has a unique minimum that we denote by
$(\tau_1,\tau_2)$ satisfying $0<\tau_1\le \tau_2< 1$.  \medskip

\noindent \textbf{Step 2: study of $\dzt^{ji}$.} Let us consider the
following function
$$
e(\tau,y,x)=G(\tau,\tau,y,x).
$$
For $\tau\in (0,1)$, setting 
$$
\xi_y = -\frac{y}\tau,\quad \xi_x = \frac{x}{1-\tau}
$$
and proceeding similarly as in Step 1, we can deduce that
$$
\dzt^{ji}(y,x)=\inf_{\tau\in (0,1)} e(\tau,y,x) \quad \mbox{for}\quad
(y,x)\in J_j^*\times J_i^*
$$
and that $\dzt^{ji}$ is also convex on $J_j^*\times J_i^*$.  We can
also deduce that the map $\tau \mapsto e(\tau,y,x)$ is strictly convex
on $(0,1)$ for $(y,x)\in J_j^*\times J_i^*$ and that it has a unique
minimum that we denote by $\tau\in (0,1)$ such that
$$
\dzt^{ji}(y,x)=  e(\tau,y,x).
$$

Using the derivative with respect to $\tau$, we see that $\tau$ is characterized by the equation 
\begin{equation}\label{eq:cno}            
F=0 \quad \mbox{with}\quad F(\tau,y,x):= K_j (-\frac{y}\tau) - K_i (\frac{x}{1-\tau}).
\end{equation} 
Moreover
$$
\partial_\tau F(\tau,y,x)=D^2_{\tau\tau}e(\tau,y,x)>0.
$$
Using the regularity $C^2$ of $L_l$ given in assumption (A1), we see
that the unique solution $\tau = T(y,x)$ of $F(\tau,y,x)=0$ is
continuously differentiable with respect to $(y,x)$. Therefore we
deduce that $\dzt^{ji}\in C^1(J_j^*\times J_i^*)$.

We have 
\begin{eqnarray}
\dzt^{ji} (y,x) &=& \mathcal{E}_1 (T(y,x),y) + \mathcal{E}_2 (T(y,x),x), \label{eq::r31} \\
\partial_y \dzt^{ji} (y,x) &=& (\partial_y \mathcal{E}_1) (T(y,x),y) = - L'_j
(\xi_y), \label{eq::r32} \\
\partial_x \dzt^{ji} (y,x) &=& (\partial_x \mathcal{E}_2) (T(y,x),x) = L'_i
(\xi_x). \label{eq:dzo-dir}
\end{eqnarray}
Writing $\tau$ for $T(y,x)$, and using the optimality
condition~\eqref{eq:cno}, we get
\begin{align*}
(\dzt^{ji} &- x \partial_x \dzt^{ji}  - y\partial_y \dzt^{ji} )(y,x) \\
&= \tau K_j (-\frac{y}\tau) + (1-\tau) K_i (\frac{x}{1-\tau}) +L_0(0)\\
&= K_j (-\frac{y}\tau) +L_0(0)= -H_j (L'_j (-\frac{y}\tau)) \\ 
&= -H_j(-\partial_y \dzt^{ji} (y,x)) \\
&= K_i (\frac{x}{1-\tau}) +L_0(0)= -H_i (L'_i (\frac{x}{1-\tau})) \\ 
&= -H_i
(\partial_x \dzt^{ji} (y,x)) 
\end{align*}
where we have used (\ref{eq::r98}) in the second and in the fourth
line.  Hence $\dzt^{ji}$ satisfies (\ref{eq::r99}) on $J_j^*\times
J_i^*$. \medskip

\noindent \textbf{Step 3: further study of $\dzun^{ji}$.}
We concluded at the end of Step 1 that for $(y,x)\in J_j^*\times J_i^*$ we have
$$
\dzun^{ji}(y,x)=\mathcal{E}_1(\tau_1,y) + \mathcal{E}_2(\tau_2,x)
$$
with $0<\tau_1\le \tau_2< 1$. Then we can distinguish two cases.
\medskip

\noindent \textbf{Case 1: $\tau_1<\tau_2$.} In that case this implies that
$$
\partial_{\tau_1} \mathcal{E}_1 (\tau_1,y) = 0, \quad \partial_{\tau_2} \mathcal{E}_2
(\tau_2,x) = 0
$$
which can be written as
\begin{equation}\label{racinedek}
  K_j (\xi_y) = 0, \quad K_i (\xi_x) = 0 
\end{equation}
with $\xi_y\le 0\le \xi_x$ defined in (\ref{eq::r97}).

Using Lemma~\ref{lem:k}, we conclude that \eqref{racinedek} holds true
if and only if $K_j(0) >0$ and $K_i(0)>0$; i.e. $j,i\in I_N\setminus
I_0$.  In this case we have $\xi_y = \xi^-_j$ and $\xi_x=\xi^+_i$ and
then
\begin{equation}\label{eq::r100}
  \tau_1 = -\frac{y}{\xi^-_j}, \quad \tau_2 = 1 - \frac{x}{\xi^+_i}.
\end{equation} 
Moreover, we have in this case  $\dzun^{ji} (y,x) = \dzd^{ji}(y,x)$.

Using Legendre-Fenchel's equality together with $K_j (\xi^-_j)=0$ and
$K_i (\xi^+_i)=0$, we have
\begin{equation}\label{eq::r130}
  \dzd^{ji} (y,x) - y \partial_y \dzd^{ji} (y,x) - x \partial_x \dzd^{ji} (y,x) = L_0 (0),
\end{equation}
and
\begin{align*}
H_i (\partial_x \dzd^{ji} (y,x)) = H_i (L'_i (\xi^+_i))
 & = \xi^+_i L'_i (\xi^+_i) - L_i (\xi^+_i ) = -L_0 (0),\\
H_j (-\partial_y \dzd^{ji} (y,x)) = H_j (L'_j (\xi^-_j))
&= \xi^-_j L'_j (\xi^-_j) - L_j (\xi^-_j) = -L_0 (0).
\end{align*}
Hence $\dzd^{ji}$ satisfies (\ref{eq::r99}) on $J_j^*\times J_i^*$.

Finally we deduce from \eqref{eq::r100} that the condition: $0 <
\tau_1 < \tau_2 <1$ is equivalent to $(y,x)\in \Delta^{ji}\cap
(J^*)^2$ and then by continuity of $\dzun^{ji}$ and $\dzd^{ji}$, we
get
$$
\dzun^{ji}= \dzd^{ji} \quad \mbox{on}\quad  \Delta^{ji}.
$$  

\noindent \textbf{Case 2: $\tau_1=\tau_2$.} If for $(y,x)\in J_j^*\times
J_i^*$ we have
$$
\dzun^{ji}(y,x)=\mathcal{E}_1(\tau_1,y)+\mathcal{E}_2(\tau_2,x)
$$
with $\tau_1=\tau_2$, then we have seen that $(y,x)\in (J_j^*\times
J_i^*)\setminus \Delta^{ji}$ and $\dzun^{ji}(y,x)=\dzt^{ji}(y,x)$.
From Lemma~\ref{lem::r101}, we also have that $\dzt^{ji}\in
C(J_j\times J_i)$ if $j\in I_0$ or $i\in I_0$ and in that case
$\Delta^{ji}=\emptyset$. On the other hand, we have $\dzt^{ji}\in
C((J_j\times J_i)\setminus \{(0,0)\})$ if $j,i\in
I_N\setminus I_0$ with $\{(0,0)\}\in \Delta^{ij}$ in that
case.  Therefore in all cases we have
$$
\dzt^{ji}\in C((J_j\times J_i)\setminus \Delta^{ji}).
$$
Now from the continuity of $\dzun$, we deduce that
$$
\dzun^{ji}=  \dzt^{ji} \quad \mbox{on}\quad  (J_j\times J_i)\setminus
\Delta^{ji}.
$$

\noindent \textbf{Step 4: on the boundary $(\partial \Delta^{ji})\cap
  (J^*)^2$.}
We already know that $\dzun$ is continuous, therefore if $j,i\in I_N\setminus I_0$:
$$
\dzd^{ji}=\dzt^{ji} \quad \mbox{on}\quad \left\{(y,x)\in J_j\times
  J_i, \quad \frac{x}{\xi^+_i} - \frac{y}{\xi^-_j} =1\right\}
$$
On the other hand, recall that $(y,x)\in J_j^*\times J_i^*$, the real
$\tau\in (0,1)$ is characterized by (\ref{eq:cno}), i.e.
\begin{equation}\label{eq::r103}
  K_j\left(\xi_y\right)=K_i\left(\xi_x\right) \quad 
\mbox{with}\quad \xi_y=-\frac{y}{\tau},\quad \xi_x=\frac{x}{1-\tau}.
\end{equation}
Notice that if we choose
$$\tau=-\frac{y}{\xi_j^-}$$
we deduce from $\frac{x}{\xi^+_i} - \frac{y}{\xi^-_j} =1$ that
\begin{equation}\label{eq::r104}
\xi_y=\xi_j^- \quad \mbox{and}\quad \xi_x=\xi_i^+
\end{equation}
which are obvious solutions of (\ref{eq::r103}).  Therefore we
conclude that this is the solution. Using
(\ref{eq::r32})-(\ref{eq:dzo-dir}) and the expression 
of $\dzd^{ji}$, (\ref{eq::r104}) implies the equality of the gradients
of $\dzd^{ji}$ and $\dzt^{ji}$ on the boundary $(\partial
\Delta^{ji})\cap (J^*)^2$.  Finally this shows that $\dzun^{ji}\in
C^1(J_j^*\times J_i^*)$.  This ends the proof of the lemmas.
\end{proof}
\begin{proof}[Proof of Lemma~\ref{lem::r111}.]
The proof proceeds in several steps. \medskip

 \noindent \textbf{Continuity of $T$.} We set for $(\tau,y,x)\in
  [0,1]\times J_j\times J_i$
$$
e(\tau,y,x)=\mathcal{E}_1(\tau,y)+\mathcal{E}_2(\tau,x).
$$
From Proposition~\ref{lemma:junc-1}, we already know that there exists a
unique $\tau\in [0,1]$ such that
$$
\dzt^{ji}(y,x)=e(\tau,y,x) \quad \mbox{if}\quad (y,x)\in J_j^*\times
J_i^*.
$$
On the other hand, we have
\begin{equation}\label{eq::r116}
  e(\tau,y,x)= \left\{\begin{array}{lll}
       (1-\tau)L_i\left(\frac{x}{1-\tau}\right)+\tau L_0(0) 
      & \text{ if }  (y,x)\in \left\{0\right\}\times J_i^*&  \mbox{(case 1),}\\
       \tau L_j\left(-\frac{y}{\tau}\right)+(1-\tau) L_0(0)
      &\text{ if } (y,x)\in J_j^*\times\left\{0\right\}&  \mbox{(case 2).}
\end{array}\right.
\end{equation}
Notice that in Cases~1 and 2, there is a unique $\tau\in [0,1]$ 
such that
\begin{equation}\label{eq::r112}
\dzt^{ji}(y,x)=e(\tau,y,x)
\end{equation}
and $\tau\in [0,1)$ in case 1, $\tau\in (0,1]$ in case 2.  Then the
continuity of $\tau=T(y,x)$ in $(J_j\times J_i)\setminus
\{(0,0)\}$ follows from the lower semi-continuity of $e$ on
$[0,1]\times J_j\times J_i$ and the uniqueness of $\tau$ such that
(\ref{eq::r112}) holds.
\medskip

\noindent \textbf{Computation of $T$.} We distinguish cases. \medskip

\noindent \textsc{Case 1:} $(y,x)\in (\left\{0\right\} \times J_i^*) \setminus \Delta^{ji}$.
Notice that we have
$$
\partial_\tau e(\tau,0,x)=-K_i(\xi_x) \quad \mbox{with}\quad
\xi_x=\frac{x}{1-\tau}.
$$
If $x\ge \xi_i^+$, then $\partial_\tau e(\tau,0,x) \ge 0$ and $T(0,x)=0$.

If $x< \xi_i^+$, then $\xi_x=\xi_i^+$ is a solution of $\partial_\tau
e(\tau,0,x) = -K_i(\xi_x)=0$ and $T(0,x)=1-\frac{x}{\xi_i^+}$.

\noindent \textsc{Case 2:} $(y,x)\in  (J_j^* \times
  \left\{0\right\})\setminus \Delta^{ji}$.
Notice that we have
$$
\partial_\tau e(\tau,y,0)=K_j(\xi_y) \quad \mbox{with}\quad
\xi_y=-\frac{y}{\tau}.
$$
If $y\ge -\xi_j^-$, then $\partial_\tau e(\tau,y,0)\le 0$ and $T(y,0)=1$.

If $y< -\xi_j^-$, then $\xi_y=\xi_j^-$ is a solution of $\partial_\tau
e(\tau,y,0) = K_j(\xi_y)=0$ and $T(y,0)=-\frac{y}{\xi_j^-}$.  This ends the
proof of the lemma.
\end{proof}
\begin{proof}[Proof of Lemma~\ref{lem::r91}.]
The proof proceeds in several steps. \medskip

  \noindent \textbf{Step 1: continuity.} From Proposition~\ref{lemma:junc-1}, we
  already know that $\dzun^{ji}\in C^1((J_j^* \times J_i^*)\cup
  \Delta^{ji})$ and (\ref{eq::r106}) holds true with
$$
\left\{
  \begin{array}{lll}
     \xi_x= \frac{x}{1-\tau},& \quad \xi_y = 
    -\frac{y}{\tau} &\quad \mbox{if}\quad (y,x)\in (J_j^* \times J_i^*) \setminus \Delta^{ji}\\
     \xi_x= \xi_i^+,& 
    \quad \xi_y = \xi_j^- &\quad \mbox{if}\quad (y,x)\in \Delta^{ji}
  \end{array}
\right.
$$
where $\tau=T(y,x)$ in the first line.  Therefore, in order to prove
that $\dzun^{ji}\in C^1(J_j \times J_i)$, it is sufficient to prove
that if $(y,x)\in (\partial (J_j \times J_i))\setminus \Delta^{ji} =
((\left\{0\right\}\times J_i)\times (J_j\times
\left\{0\right\}))\setminus \Delta^{ji}$, and if $(y^k,x^k)\in (J_j^*
\times J_i^*) \setminus \Delta^{ji}$ is a sequence of points such
that $(y^k,x^k)\to (y,x)$, then we have with $\tau^k=T(y^k,x^k)$
\begin{equation}\label{eq::r108}
-\frac{y^k}{\tau^k}\to \xi_y \quad \mbox{and}\quad \frac{x^k}{1-\tau^k}\to \xi_x
\end{equation}
where $(\xi_y,\xi_x)$ is given by (\ref{eq::r109}).  Let us recall
that $\tau^k$ is characterized by the equation
\begin{equation}\label{eq::r107}
   K_j\left(-\frac{y^k}{\tau^k}\right)=K_i\left(\frac{x^k}{1-\tau^k}\right)
\end{equation}
We will assume (up to extract a subsequence) that $\tau^k\to \tau_0$
for some limit $\tau_0\in [0,1]$.  Because we have $|x^k|^2+|y^k|^2\le
R^2$, it is easy to deduce from (\ref{eq::r107}), that there exists a
constant $C_R$ such that
\begin{equation}\label{eq::r110}
\left|-\frac{y^k}{\tau^k}\right| + \left|\frac{x^k}{1-\tau^k}\right|\le C_R
\end{equation}
This can be proved by contradiction, distinguishing the cases
$\tau_0=0$, $\tau_0=1$ and $\tau_0\in (0,1)$.  Up to extract a
subsequence, we can then pass to the limit in (\ref{eq::r107}) and get
\begin{equation}\label{eq::r113}
   K_j\left(\xi_y\right)=K_i\left(\xi_x\right) 
  \quad \mbox{with}\quad \xi_y\le 0\le \xi_x
\end{equation}
In the following cases, we now identify one of the two quantities
$\xi_y$ or $\xi_x$, the other one being determined by
(\ref{eq::r113}).

\noindent \textbf{Case 1: $(y,x)\in (\left\{0\right\}\times
  J_i^*)\setminus \Delta^{ji}$.}  From Lemma~\ref{lem::r111}, we know
that $\tau_0=\max\left(0,1-\frac{x}{\xi_i^+}\right)$, and then
$$
\xi_x=\max (x,\xi_i^+),\quad \xi_y = (K_j^-)^{-1}(K_i(\xi_x))
$$
and from (\ref{eq::r116}), we get
\begin{equation}\label{eq::r118}
\dzun^{ji}(0,x)=\frac{x}{\xi_x}\left(L_i(\xi_x)-L_0(0)\right)+L_0(0)
\end{equation}

\noindent \textbf{Case 2: $(y,x)\in (J_j^*\times
  \left\{0\right\})\setminus \Delta^{ji}$.} From
Lemma~\ref{lem::r111}, we know that
$\tau_0=\min\left(1,-\frac{y}{\xi_j^-}\right)$, and then
$$
-\xi_y=\max ( y, -\xi_j^-),\quad \xi_x = (K_i^+)^{-1}(K_j(\xi_y))
$$
and from (\ref{eq::r116}), we get
\begin{equation}\label{eq::r119}
\dzun^{ji}(y,0)=-\frac{y}{\xi_y}\left(L_j(\xi_y)-L_0(0)\right)+L_0(0)
\end{equation}

\noindent \textbf{Case 3: $(y,x)\in \{(0,0)\} \setminus
  \Delta^{ji}$.} This case only occurs if $j\in I_0$ or $i\in I_0$.
Moreover at least one of the two quantities $-\frac{y^k}{\tau^k}$ and
$\frac{x^k}{1-\tau^k}$ tends to zero.

If $\xi_y=0$, then $K_i(\xi_x)=K_j(0)$ and hence
$$
\xi_y=0\quad \Longrightarrow \quad L_i(0)\ge L_j(0)=L_0(0)
$$
If $\xi_x=0$, then  $K_j(\xi_y)=K_i(0)$
and hence
$$
\xi_x=0\quad \Longrightarrow \quad L_j(0)\ge L_i(0)=L_0(0)
$$
This implies that
$$
\left\{\begin{array}{lll}
\xi_x= \xi_i^+=0, &\quad \xi_y=\xi_j^- <0, &\quad \mbox{if}\quad L_i(0)=L_0(0)<L_j(0),\\
\xi_x=\xi_i^+>0, &\quad \xi_y= \xi_j^-=0,& \quad \mbox{if}\quad L_i(0)>L_j(0)=L_0(0),\\
\xi_x=\xi_i^+=0, &\quad \xi_y= \xi_j^-= 0,& \quad \mbox{if}\quad L_i(0)=L_j(0)=L_0(0).
\end{array}\right.
$$
By the uniqueness of the limit, this finally shows that $\dzun^{ji}\in
C^1(J_j\times J_i)$ and (\ref{eq::r109}) holds. \medskip

\noindent \textbf{Step 2: checking (\ref{eq::r116}) and (\ref{eq::r115}).}
From (\ref{eq::r118}) and (\ref{eq::r119}), we deduce (\ref{eq::r116})
on $((J_j^*\times \left\{0\right\})\cup (\left\{0\right\}\times
J_i^*))\setminus \Delta^{ji}$. From $\dzun^{ji}=\dzd^{ji}$ on
$\Delta^{ji}$, we deduce that (\ref{eq::r116}) is also true on
$((J_j^*\times \left\{0\right\})\cup (\left\{0\right\}\times
J_i^*))\cap \Delta^{ji}$.

Then (\ref{eq::r115}) follows from a simple computation for
$(y,x)\not= (0,0)$.  This is still true for $(y,x)=0$, because
$\dzun^{ji}$ is $C^1$.  This ends the proof of the lemma.
\end{proof}

\subsection{Study of $\dzq$}

The following lemma will be used below. Since it is elementary, its
proof is omitted. 
\begin{lemma}[Properties of $\dzq^{ji}$]\label{lem::r123}
  For $j=i\in I_N$, we have for $(y,x)\in J_j\times J_i$ with
  $(y,x)\not= (0,0)$ if $j=i\in I_N\setminus I_0$:
\begin{align*}
  \dzq^{ji}(y,x)-x\partial_x\dzq^{ji}(y,x)-y\partial_y\dzq^{ji}(y,x) &= 
  L_0(0)+K_i(x-y) \\ &= -H_i(\partial_x\dzq^{ji}(y,x)) \\
&  -H_j(-\partial_y\dzq^{ji}(y,x))
\end{align*}
and
$$
\partial_x\dzq^{ji}(y,x) =L_i'(x-y),\quad \partial_y\dzq^{ji}(y,x)
=-L_j'(x-y).
$$
\end{lemma}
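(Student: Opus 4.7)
The plan is a direct computation that unpacks the definition of $\dzq$ and invokes the Legendre--Fenchel relation \eqref{eq::r98} from Lemma~\ref{lem:k}. When $j=i$, the definition of $\dzq$ reads $\dzq^{ji}(y,x) = L_i(x-y)$ on $(J_i\times J_i)\setminus\{(0,0)\}$, with the special value $\dzq^{ji}(0,0) = L_0(0)$. If $i \in I_0$ one has $L_i(0) = L_0(0)$ and the formula $L_i(x-y)$ extends smoothly to $(0,0)$; if $i \in I_N\setminus I_0$, however, $L_i(0) > L_0(0)$, so $\dzq^{ji}$ has a (removable) discontinuity at the origin, which is exactly why the statement excludes $(0,0)$ in that case.

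On the set where $\dzq^{ji}(y,x)=L_i(x-y)$, applying the chain rule to the one-variable function $L_i$ immediately gives
\[
\partial_x \dzq^{ji}(y,x) = L_i'(x-y), \qquad \partial_y \dzq^{ji}(y,x) = -L_i'(x-y),
\]
which is the second displayed identity (using that $L_j = L_i$ since $j=i$). I would then compute
\[
\dzq^{ji}(y,x) - x\, \partial_x \dzq^{ji}(y,x) - y\, \partial_y \dzq^{ji}(y,x) = L_i(x-y) - (x-y)\, L_i'(x-y),
\]
and recognize the right-hand side as $L_0(0) + K_i(x-y)$ by the very definition $K_l(\xi) = L_l(\xi) - \xi L_l'(\xi) - L_0(0)$ introduced just before Lemma~\ref{lem:k}.

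To finish, I would apply the identity \eqref{eq::r98}, namely $K_l(\xi) + L_0(0) = -H_l(L_l'(\xi))$, with $l = i$ and $\xi = x-y$. This yields
\[
L_0(0) + K_i(x-y) = -H_i\bigl(L_i'(x-y)\bigr) = -H_i\bigl(\partial_x \dzq^{ji}(y,x)\bigr),
\]
which is the first equality. Since $j=i$ implies $L_j = L_i$ and $H_j = H_i$, the same quantity equals $-H_j\bigl(L_j'(x-y)\bigr) = -H_j\bigl(-\partial_y \dzq^{ji}(y,x)\bigr)$, giving the second equality of the chain. There is no real obstacle here: the entire content of the lemma is a bookkeeping exercise combining the explicit form of $\dzq^{ji}$ as a composition with $L_i$, the definition of $K_i$, and the Legendre--Fenchel identity already established in Lemma~\ref{lem:k}.
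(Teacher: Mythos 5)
Your computation is correct and is exactly the elementary argument the paper has in mind; indeed the paper explicitly omits the proof of this lemma as elementary, and your chain of identities (chain rule on $L_i(x-y)$, the definition of $K_i$, and the Legendre--Fenchel identity \eqref{eq::r98}) is the intended one. The only quibble is cosmetic: for $i\in I_N\setminus I_0$ the value $\dzq^{ji}(0,0)=L_0(0)$ differs from the limit $L_i(0)$, so the discontinuity at the origin is a genuine jump of the assigned value rather than a removable singularity in the usual sense, but since $(0,0)$ is excluded from the statement this does not affect anything.
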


\subsection{Proof of Theorem~\ref{thm:minimal-action-0}}

We are now in position to prove Theorem~\ref{thm:minimal-action-0}. 
We prove several lemmas successively. 
\begin{lemma}[Properties of $\dzo^{ji}$]\label{lem:dzo-1}
  For $(y,x)\in J_j\times J_i$, we have
$$
\dzo^{ji}(y,x)=\left\{\begin{array}{ll}
L_i(x-y)  & \quad \mbox{if}\quad i=j\in I_0,\\
\dzun^{ji}(y,x) & \quad \mbox{if}\quad i\not=j,\\
\min(\dzun^{ji}(y,x),L_i(x-y)) & \quad \mbox{if}\quad i=j\in I_N\setminus I_0.
\end{array}\right.
$$
In particular $\dzo^{ji}\in C^1(J_j\times J_i)$ in the first two cases.
\end{lemma}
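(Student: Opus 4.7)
My plan is to start from Lemma~\ref{lem:optim-curv}, which gives $\dzo = \min(\dzq, \dzun)$, and then to compare the two actions on each $J_j\times J_i$ according to whether $i\ne j$, $i=j\in I_0$, or $i=j\in I_N\setminus I_0$.

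If $i \neq j$, the definition of $\dzq$ gives $\dzq^{ji}(y,x) = +\infty$ for every $(y,x) \neq (0,0)$, while $\dzq^{ji}(0,0) = L_0(0) = \dzun^{ji}(0,0)$ (taking $\tau_1=\tau_2$ arbitrarily in the definition of $\dzun$). Hence $\dzo^{ji} = \dzun^{ji}$ on $J_j\times J_i$, and the $C^1$ regularity is provided by Lemma~\ref{lem::r91}.

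The crux is the case $i = j$. For $y = y_i e_i$, $x = x_i e_i$ and $0 < \tau_1 \le \tau_2 < 1$, a direct computation gives
$$
\mathcal{E}_1(\tau_1,y) + \mathcal{E}_2(\tau_2,x) = \tau_1 L_i\!\left(-\tfrac{y_i}{\tau_1}\right) + (1-\tau_2) L_i\!\left(\tfrac{x_i}{1-\tau_2}\right) + (\tau_2-\tau_1) L_0(0),
$$
and Jensen's inequality applied to the convex function $L_i$ with weights $(\tau_1,\, \tau_2-\tau_1,\, 1-\tau_2)$ at the points $(-y_i/\tau_1,\, 0,\, x_i/(1-\tau_2))$ yields
$$
\mathcal{E}_1(\tau_1,y) + \mathcal{E}_2(\tau_2,x) \ge L_i(x_i - y_i) + (\tau_2-\tau_1)\bigl(L_0(0) - L_i(0)\bigr).
$$
If $i \in I_0$ then $L_0(0) = L_i(0)$ by \eqref{eq::r26}, so the correction vanishes, and taking the infimum over admissible $(\tau_1,\tau_2)$ yields $\dzun^{ii}(y,x) \ge L_i(x_i-y_i) = \dzq^{ii}(y,x)$, hence $\dzo^{ii} = \dzq^{ii} = L_i(x-y)$, which lies in $C^1(J_i^2)$ because $L_i \in C^2(\R)$. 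If on the contrary $i \in I_N \setminus I_0$, the correction term is genuinely negative, the two actions are not a priori comparable, and one simply keeps $\dzo^{ii} = \min(\dzun^{ii}, L_i(x-y))$ as stated.

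The only real technical point is to justify the Jensen step in the degenerate situations $y_i = 0$ (in which $\tau_1 = 0$ is allowed in $\mathcal{E}_1$) and $x_i = 0$ (in which $\tau_2 = 1$ is allowed in $\mathcal{E}_2$). These are handled by a two-weight version of Jensen together with the identities $\mathcal{E}_1(\tau_1,0)=0$ and $\mathcal{E}_2(\tau_2,0)=L_0(0)$, and by passing to the limit using the lower semi-continuity of $\mathcal{E}_1$ and $\mathcal{E}_2$ established in Lemma~\ref{lem:lsc-ei}. I expect this boundary bookkeeping to be the only mildly delicate step; the rest of the proof is a direct consequence of Lemmas~\ref{lem:optim-curv} and \ref{lem::r91}.
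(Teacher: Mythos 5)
Your overall strategy is the paper's: reduce everything to comparing $\dzun^{ji}$ with $\dzq^{ji}$ on each block and use convexity of $L_i$ (Jensen) to get $\dzun^{ii}\ge L_i(x-y)$ when $i\in I_0$. Your three-point Jensen with weights $(\tau_1,\tau_2-\tau_1,1-\tau_2)$ is a nice self-contained variant: the paper instead first invokes Lemma~\ref{lemma:junc-1} to replace $\dzun^{ii}$ by $\dzt^{ii}$ (possible because $\Delta^{ii}=\emptyset$ for $i\in I_0$) and then applies the one-parameter inequality $\tau L_i(-y/\tau)+(1-\tau)L_i(x/(1-\tau))\ge L_i(x-y)$; your computation makes the defect $(\tau_2-\tau_1)(L_0(0)-L_i(0))$ explicit, which cleanly explains why the comparison breaks when $i\notin I_0$.

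There is, however, one concrete inaccuracy in your treatment of the case $i\ne j$. You assert that $\dzq^{ji}(y,x)=+\infty$ for every $(y,x)\ne(0,0)$. This is false on the part of the boundary where exactly one of the two points is the junction point: if $y=0$ and $x\in J_i^*$, then $(0,x)\in J_i^2\setminus\{(0,0)\}$, so $\dzq(0,x)=L_i(x_i)<+\infty$, and similarly $\dzq(y,0)=L_j(-y_j)$ for $y\in J_j^*$. Since you start from $\dzo=\min(\dzq,\dzun)$ of Lemma~\ref{lem:optim-curv} rather than from the already-reduced formula \eqref{eq:dzo-dzun}, you must also check $\dzun^{ji}\le\dzq^{ji}$ at these boundary points in order to conclude that $\dzo^{ji}=\dzun^{ji}$ there. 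The fix is one line: taking $\tau_1=\tau_2=0$ gives $\dzun(0,x)\le\mathcal{E}_1(0,0)+\mathcal{E}_2(0,x)=L_i(x_i)$, and taking $\tau_1=\tau_2=1$ gives $\dzun(y,0)\le\mathcal{E}_1(1,y)+\mathcal{E}_2(1,0)=L_j(-y_j)$; this is exactly the computation the paper performs when deriving \eqref{eq:dzo-dzun}. With that line added (and with the boundary bookkeeping you already sketch for the diagonal case), the proof is complete; the $C^1$ claims follow, as you say, from $L_i\in C^2$ and Lemma~\ref{lem::r91}.
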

\begin{lemma}[Singularities of the gradient of $\dzo$]\label{lem:dzo-1bis}
In the case $i=j\in I_N\setminus I_0$, we have
\begin{equation}\label{eq::r120}
  \dzo^{ji}(y,x) = \left\{\begin{array}{ll}
      \dzd^{ji} & \mbox{ in a neighborhood of }
       (\partial (J_j\times J_i))\cap \Delta^{ji},\\
      L_i(x-y) & \mbox{ in a neighborhood of }
       (\partial (J_j\times J_i))\setminus \overline{\Delta^{ji}};
\end{array}\right.
\end{equation}
moreover, in this case there exists a curve $\Gamma^{ji}$ such that
$\dzo^{ji}\in C^1((J_j\times J_i)\setminus (\Gamma^{ji}\cup
\left\{Y_j,X_i\right\}))$. This curve connects $Y_j= (-\xi_j^-,0)$ and
$X_i=(0,\xi_i^+)$ and is contained in $(J_j^*\times J_i^*)\cap \Delta^{ji}$
\end{lemma}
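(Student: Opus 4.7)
The plan is to analyze the difference
$$
F(y,x) := L_i(x-y) - \dzd^{ii}(y,x) = L_i(x-y) - L_i'(\xi_i^+)x + L_i'(\xi_i^-)y - L_0(0),
$$
since by Lemma~\ref{lem:dzo-1} we have $\dzo^{ii} = \min(\dzun^{ii}, L_i(x-y))$, with $\dzun^{ii}=\dzd^{ii}$ on $\Delta^{ii}$ and $\dzun^{ii}=\dzt^{ii}$ off $\Delta^{ii}$ (Lemma~\ref{lemma:junc-1}). To prove \eqref{eq::r120}, take first $(0,x_0)\in\partial(J_i\times J_i)\cap\Delta^{ii}$, i.e.\ $x_0<\xi_i^+$: strict convexity of $L_i$ combined with $K_i(\xi_i^+)=0$ gives
$$
L_i(x_0) > L_i(\xi_i^+)+L_i'(\xi_i^+)(x_0-\xi_i^+) = L_i'(\xi_i^+)x_0 + L_0(0) = \dzd^{ii}(0,x_0),
$$
so $F(0,x_0)>0$. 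Continuity of $F$ and openness of $\Delta^{ii}$ deliver a neighborhood contained in $\Delta^{ii}$ where $F>0$, hence $\dzo^{ii}=\dzd^{ii}$; the case $y_0<-\xi_i^-$, $x_0=0$ is symmetric. For the second alternative, if $(0,x_0)\in\partial(J_i\times J_i)\setminus\overline{\Delta^{ii}}$ (so $x_0>\xi_i^+$), a neighborhood lies outside $\overline{\Delta^{ii}}$, where $\dzun^{ii}=\dzt^{ii}$, and Jensen's inequality applied to $L_i$ gives
$$
\dzt^{ii}(y,x) = \inf_{\tau\in[0,1]}\bigl\{\tau L_i(-y/\tau)+(1-\tau)L_i(x/(1-\tau))\bigr\} \ge L_i(x-y),
$$
whence $\dzo^{ii}=L_i(x-y)$ on this neighborhood.

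For the curve, observe that $\nabla F = (L_i'(\xi_i^-)-L_i'(x-y),\,L_i'(x-y)-L_i'(\xi_i^+))$ never vanishes (since $\xi_i^-<\xi_i^+$ and $L_i'$ is strictly increasing), so the implicit function theorem makes $\{F=0\}$ a smooth $1$-manifold. Parameterizing along lines $x-y=c$,
$$
F(y,y+c)=f(c)-yD,\qquad f(c):=L_i(c)-L_i'(\xi_i^+)c-L_0(0),\quad D:=L_i'(\xi_i^+)-L_i'(\xi_i^-)>0,
$$
where $f\ge 0$ with equality iff $c=\xi_i^+$. Hence $F=0$ uniquely determines $y(c)=f(c)/D\ge 0$, and with $x(c)=y(c)+c$ one checks via $K_i(\xi_i^\pm)=0$ that $c=\xi_i^+$ gives $X_i=(0,\xi_i^+)$ and $c=\xi_i^-$ gives $Y_i=(-\xi_i^-,0)$. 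Define $\Gamma^{ii}:=\{(y(c),x(c)):c\in(\xi_i^-,\xi_i^+)\}$. Containment in $(J_i^*\times J_i^*)\cap\Delta^{ii}$ reduces to proving $x(c)/\xi_i^+-y(c)/\xi_i^-<1$ together with strict positivity of $y(c)$ and $x(c)$ on the open interval: setting $\psi(c):=x(c)/\xi_i^+-y(c)/\xi_i^-=y(c)B+c/\xi_i^+$ with $B:=1/\xi_i^+-1/\xi_i^->0$, a direct computation yields $\psi(\xi_i^\pm)=1$ and $\psi''(c)=(B/D)\,L_i''(c)>0$, so strict convexity forces $\psi<1$ on $(\xi_i^-,\xi_i^+)$; strict positivity of $y(c)$ on $[\xi_i^-,\xi_i^+)$ is immediate from $f(c)>0$ there, and that of $x(c)$ on $(\xi_i^-,\xi_i^+]$ follows by the analogous convexity argument applied to $h(c):=L_i(c)-L_0(0)-cL_i'(\xi_i^-)$, which has its unique zero-minimum at $c=\xi_i^-$.

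Finally, for the $C^1$ regularity off $\Gamma^{ii}\cup\{X_i,Y_i\}$: on $(J_i\times J_i)\setminus\overline{\Delta^{ii}}$, $\dzo^{ii}=L_i(x-y)\in C^1$; on $\Delta^{ii}\setminus\Gamma^{ii}$, $F$ has constant sign on each connected component, so $\dzo^{ii}$ locally coincides with either $\dzd^{ii}$ or $L_i(x-y)$, both $C^1$ (recall $\dzun^{ii}\in C^1(\Delta^{ii})$ by Lemma~\ref{lem::r91}); on the open hypotenuse $\partial\Delta^{ii}\cap(J_i^*\times J_i^*)\setminus\{X_i,Y_i\}$, strict Jensen gives $F<0$, so $\dzo^{ii}=L_i(x-y)$ in a neighborhood. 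The failure of $C^1$ at $X_i$ and $Y_i$ is genuine since, for instance, $\nabla\dzd^{ii}(X_i)=(-L_i'(\xi_i^-),L_i'(\xi_i^+))$ differs from $\nabla L_i(x-y)|_{X_i}=(-L_i'(\xi_i^+),L_i'(\xi_i^+))$ in the $y$-component. The main obstacle is the containment step: showing that $\Gamma^{ii}$ stays strictly inside $\Delta^{ii}$ and meets its boundary only at the two corners $X_i$ and $Y_i$, which is exactly the strict convexity analysis of $\psi$ sketched above.
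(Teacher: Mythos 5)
Your proof is correct. For the identity \eqref{eq::r120} you follow essentially the paper's route: you combine $\dzo^{ii}=\min(\dzun^{ii},L_i(x-y))$ with $\dzun^{ii}=\dzd^{ii}$ on $\Delta^{ii}$ (resp.\ $\dzun^{ii}=\dzt^{ii}\ge L_i(x-y)$ off $\Delta^{ii}$, by Jensen), and the strict tangent-line inequality together with $K_i(\xi_i^\pm)=0$ to get $\dzq^{ii}>\dzd^{ii}$ on $(\partial(J_i\times J_i))\cap\Delta^{ii}$; this is exactly the paper's inequality \eqref{eq::r122}. Where you genuinely diverge is in the description of $\Gamma^{ii}$. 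The paper notes that $\dzq^{ii}-\dzd^{ii}$ is convex, so that $K^{ii}=\{\dzq^{ii}\le\dzd^{ii}\}$ is a convex set, identifies $\Gamma^{ii}$ with $(\partial K^{ii})\cap\Delta^{ii}$, and deduces $\overline{\Gamma^{ii}}=\Gamma^{ii}\cup\{X_i,Y_i\}$ from $\dzq^{ii}<\dzd^{ii}$ on the open segment $]X_i,Y_i[$ (the hypotenuse), where the difference is strictly convex and vanishes at the endpoints. You instead foliate by the lines $x-y=c$, solve $F=0$ explicitly as $y(c)=f(c)/D$, and check containment in $(J_i^*\times J_i^*)\cap\Delta^{ii}$ via the strict convexity of $\psi(c)=x(c)/\xi_i^+-y(c)/\xi_i^-$ with $\psi(\xi_i^\pm)=1$, and of $f$ and $h$. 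This buys an explicit parameterization of $\Gamma^{ii}$, and as a by-product the identity $\{F=0\}\cap\Delta^{ii}=\Gamma^{ii}$ (since $\psi>1$ for $c\notin[\xi_i^-,\xi_i^+]$), which is precisely what your constant-sign argument for the $C^1$ regularity on $\Delta^{ii}\setminus\Gamma^{ii}$ requires; the paper's convexity argument is shorter but leaves the structure of $\partial K^{ii}$ and of the closure more implicit. Your explicit verification of the $C^1$ regularity region by region (exterior of $\overline{\Delta^{ii}}$, $\Delta^{ii}\setminus\Gamma^{ii}$, and a full neighbourhood of the open hypotenuse where $\dzo^{ii}=L_i(x-y)$ on both sides) is more detailed than what the paper writes and is welcome; the final remark on the gradient jump at $X_i$, $Y_i$ is not needed for the statement but is accurate.
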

The results of these two lemmas are illustrated in
Figures~ \ref{fig2} and \ref{fig4}.
\begin{figure}[h]
\begin{center}
\resizebox{4cm}{!}{\input{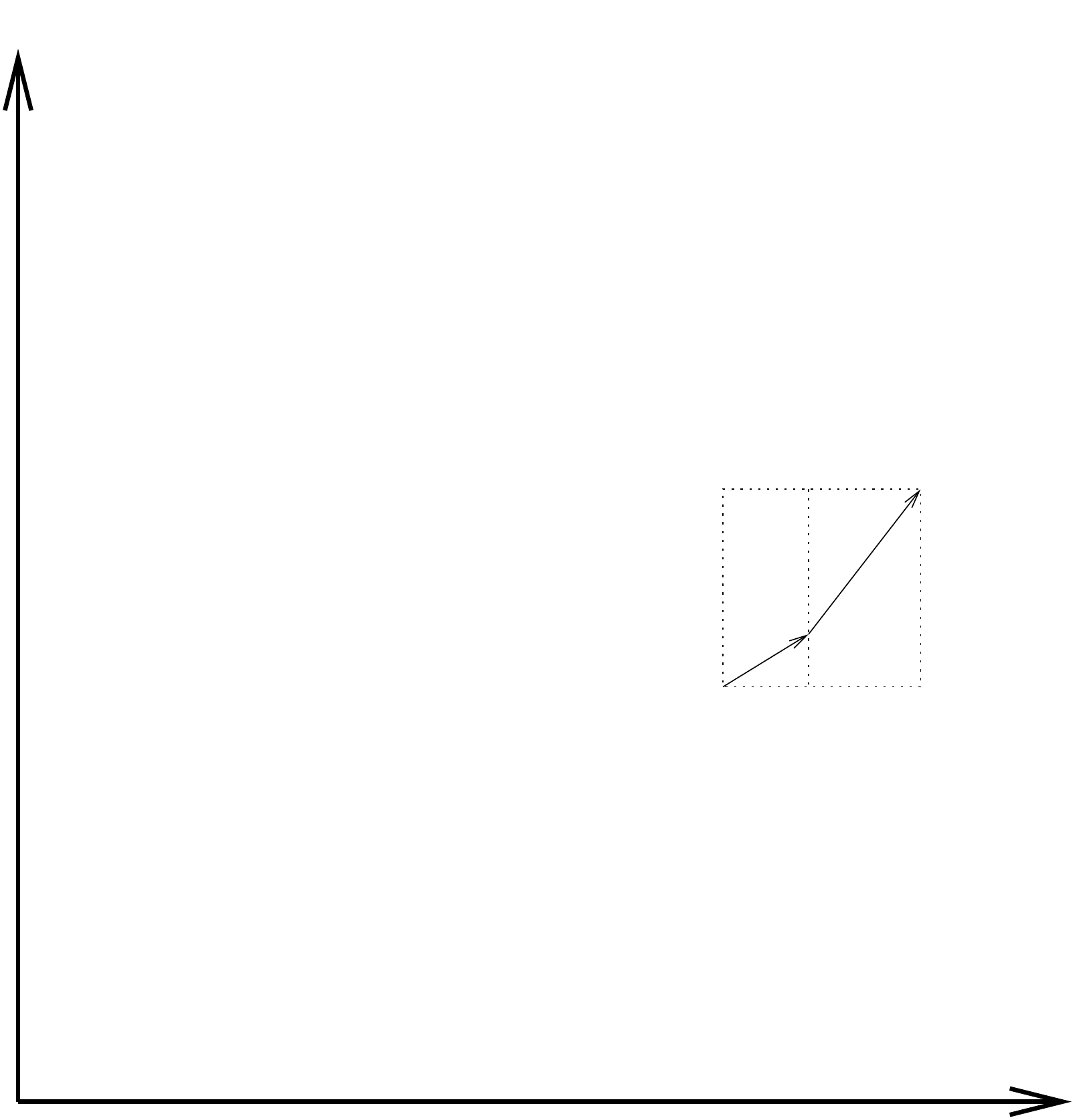_t}}
\resizebox{4cm}{!}{\input{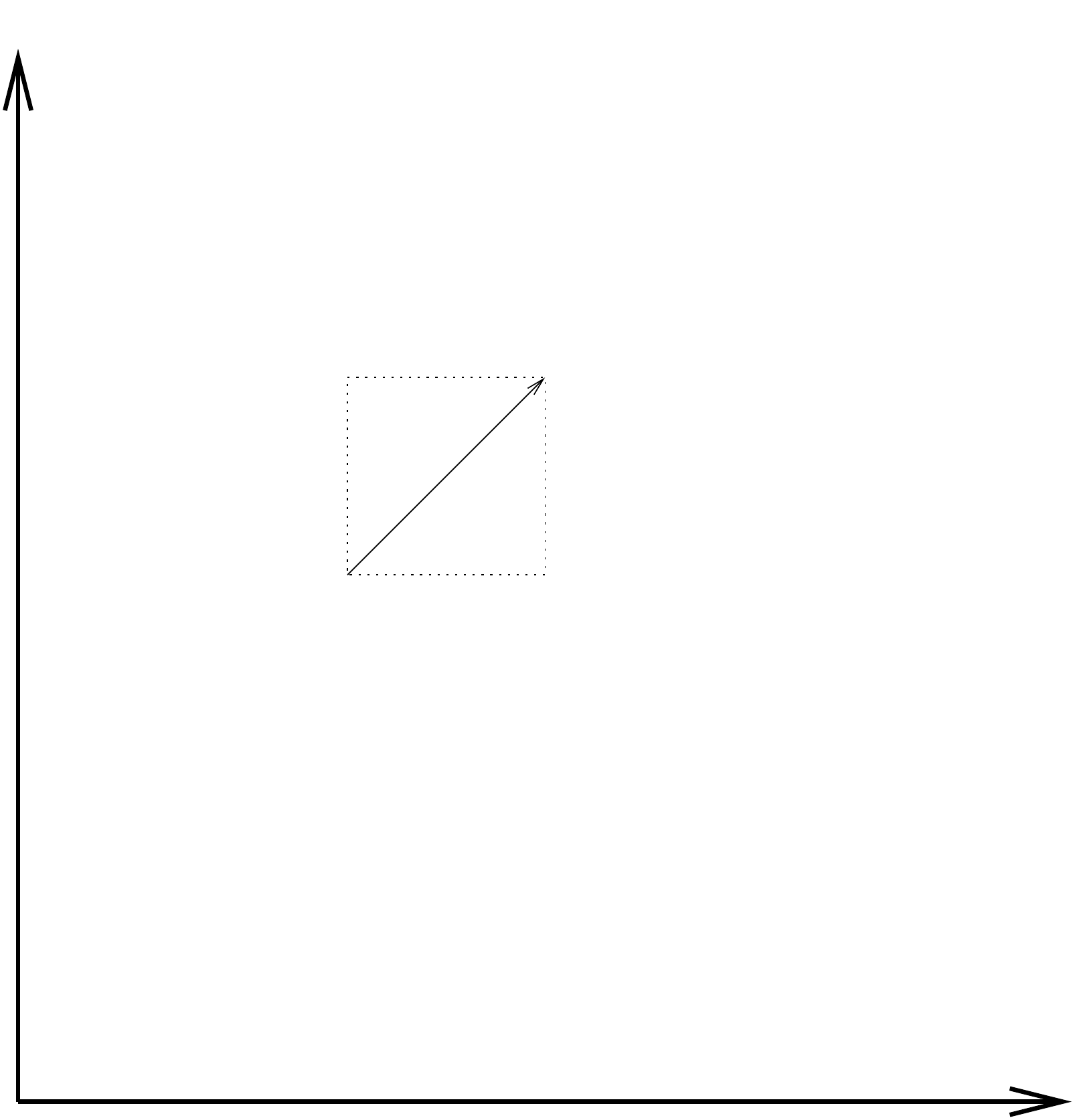_t}}
\caption{$i\in I_0$ or $j\in I_0$: trajectories never stay at the junction point}
\label{fig2}
\end{center}
\end{figure}
\begin{figure}[h]
\begin{center}
\raisebox{-4pt}{\resizebox{4.1cm}{!}{\input{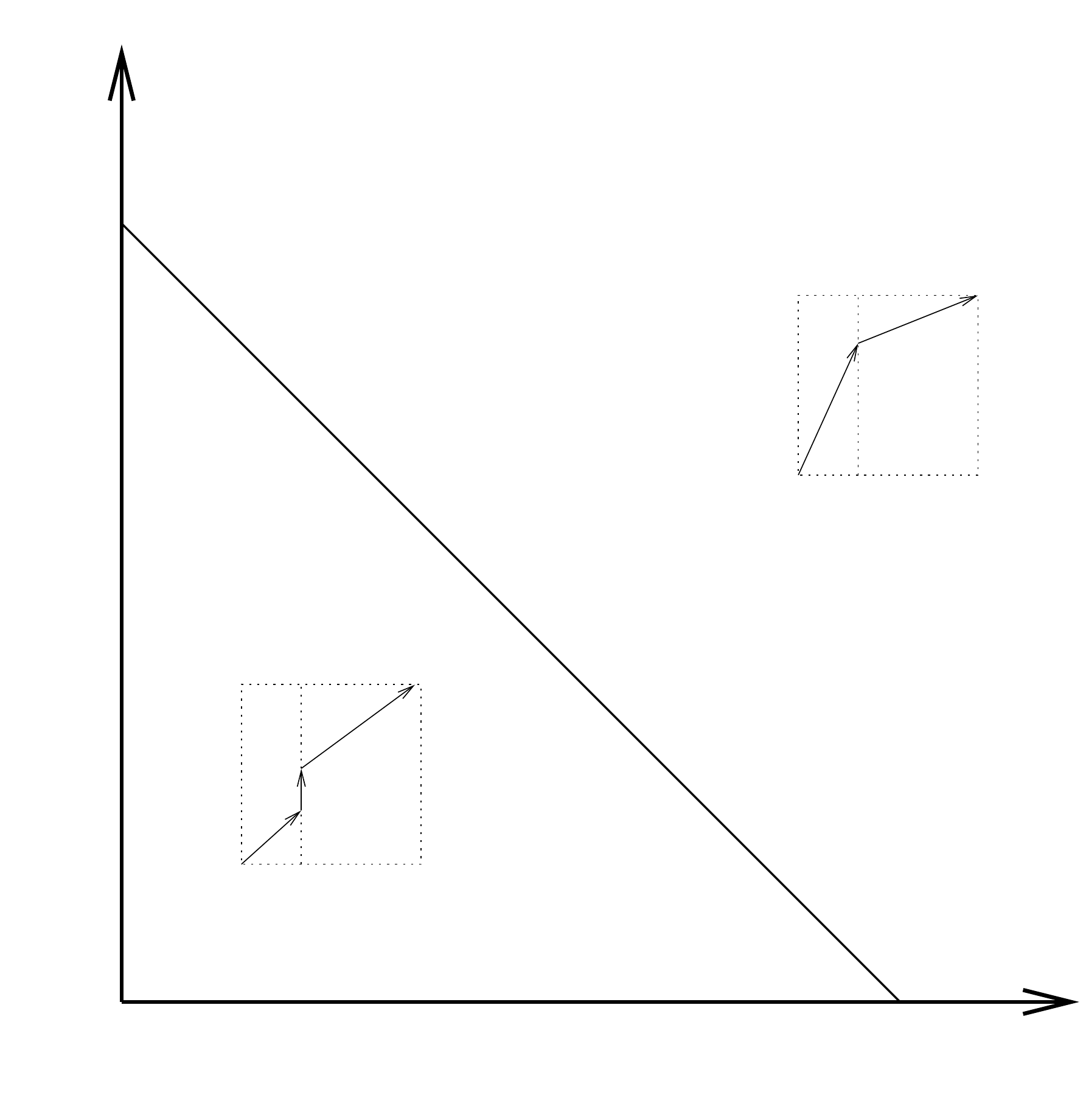_t}}}
\resizebox{4cm}{!}{\input{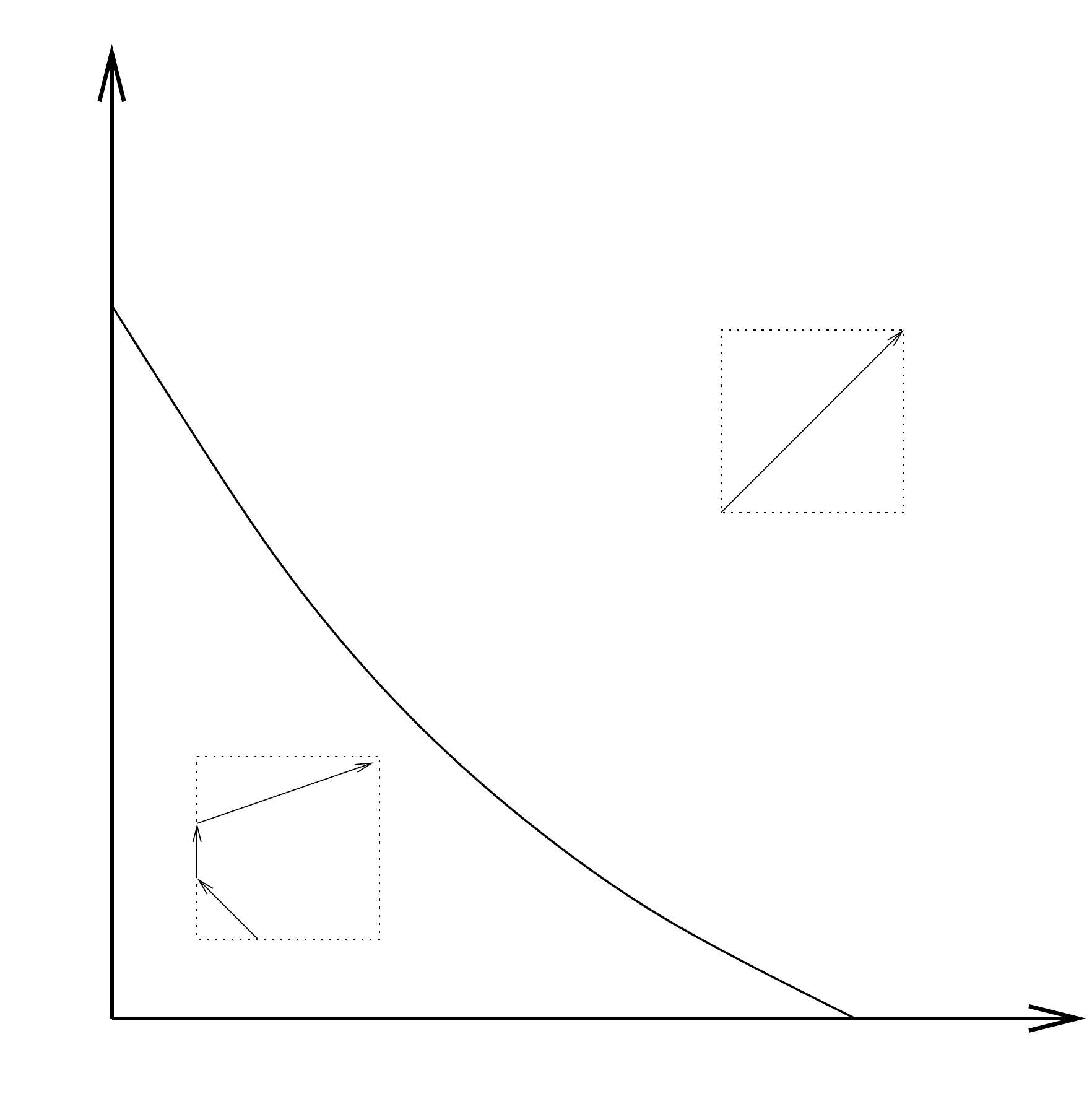_t}}
\caption{$i,j \in I_N \setminus I_0$: trajectories do stay at the junction
  point if $(y,x) \sim (0,0)$}
\label{fig4}
\end{center}
\end{figure} 
\begin{proof}[Proof of Lemma~\ref{lem:dzo-1}.]
  We only have to treat the case $i=j$. The convexity of $L_i$ implies that for
  $\tau\in (0,1)$:
$$
e(\tau,y,x)= \tau L_i\left(-\frac{y}{\tau}\right)+
(1-\tau)L_i\left(\frac{x}{1-\tau}\right)\ge L_i(x-y).
$$
Therefore for $(y,x)\in J_j^*\times J_i^*$ with $j=i$, we have
$$
\dzt^{ji}(y,x) = \inf_{0< \tau< 1}e(\tau,y,x) \ge L_i(x-y)
$$
When $i=j\in I_0$,  we have
$\dzun^{ji}(y,x) =\dzt^{ji}(y,x)$, and then
$$
\dzun^{ji}(y,x)   \ge L_i(x-y) = \dzq^{ji}(y,x)=\dzo^{ji}(y,x)
$$
for $(y,x)\in J_j^*\times J_i^*$ and then also for $(y,x)\in J_j\times
J_i$, by continuity of the functions.
\end{proof}
\begin{proof}[Proof of Lemma~\ref{lem:dzo-1bis}.]
We first prove \eqref{eq::r120} and then describe the curve
$\Gamma_{j,i}$. 

\paragraph{Proof of (\ref{eq::r120}).}
Combining \eqref{eq:dzo-dzun} and \eqref{eq:dzun-dzd-dzt}, we obtain
\begin{multline}\label{eq::r121}
  \dzo^{ji}(y,x)= \min
  (\dzun^{ji}(y,x),\dzq^{ji}(y,x))\\=\begin{cases}
      \dzq^{ji}(y,x)=L_i(x-y) &  \text { for } (y,x)\in (J_j\times J_i)\setminus \Delta^{ji},\\
      \min(\dzd^{ji}(y,x),\dzq^{ji}(y,x)) &  \text{ for }      (y,x)\in \Delta^{ji}.
\end{cases}
\end{multline}
On the other hand, we have (a strictly convex function being above its
tangent) for $x\not= \xi_i^+$ and $y\not= -\xi_j^-$
\begin{align*}
L_i(x)&> L_i(\xi_i^+) +
    (x-\xi_i^+)L_i'(\xi_i^+) = x L_i'(\xi_i^+) + L_0(0)
    = \dzd^{ji}(0,x) \\
    L_j(-y) &> L_j(\xi_j^-) + (-y- \xi_j^-)L_j'(\xi_j^-) =
    -yL_j'(\xi_j^-) +L_0(0) = \dzd^{ji}(y,0). 
\end{align*}
This shows that
\begin{equation}\label{eq::r122}
  \dzq^{ji} > \dzd^{ji} \quad \mbox{on}\quad  
  (\partial (J_j\times J_i)) \cap \Delta^{ji}.
\end{equation}
We see that (\ref{eq::r121}) and (\ref{eq::r122}) imply
(\ref{eq::r120}).
\medskip

\paragraph{Description of $\left\{\dzd^{ji}=\dzq^{ji}\right\}\cap
  \Delta^{ji}$.}
Notice that
$$
\left\{\begin{array}{l}
    \dzd^{ji}(0,\xi_i^+)=\xi_i^+ L_i'(\xi_i^+)+L_0(0) = L_i(\xi_i^+)=\dzq^{ji}(0,\xi_i^+),\\
    \dzd^{ji}(-\xi_j^-,0)=\xi_j^- L_j'(\xi_j^-)+L_0(0) = L_j(\xi_j^-)=\dzq^{ji}(-\xi_j^-,0).\\
\end{array}\right.
$$
This means that the functions $\dzd^{ji}$ and $\dzq^{ji}$ coincide at
the two points $X_i=(0,\xi_i^+)$ and $Y_j=(-\xi_j^-,0)$.  Therefore we
have
$$
\dzq^{ji}< \dzd^{ji}\quad \mbox{on the open interval}\quad ]X_i,Y_j[
$$
because $\dzd^{ji}$ is linear and $\dzq^{ji}$ is strictly convex as a function of $y-x$.\\
The function $(y,x)\mapsto \dzq^{ji}(y,x)-\dzd^{ji}(y,x)$ being convex
because $\dzd^{ji}(y,x)$ is linear, we can consider the convex set
$$
K^{ji}=\left\{(y,x)\in J_j\times J_i,\quad \dzq^{ji}(y,x)\le
  \dzd^{ji}(y,x)\right\}.
$$
Then for $i=j\in I_N\setminus I_0$, the set
$$
\Gamma^{ji}=\left\{(y,x)\in \Delta^{ji},\quad
  \dzd^{ji}(y,x)=\dzq^{ji}(y,x)\right\}
$$
is contained in the boundary of the convex set $K^{ji}$. More precisely, we have
$$
\Gamma^{ji} = ((\partial K^{ji}) \cap \Delta^{ji})\subset J_j^*\times
J_i^*
$$
which shows that $\Gamma^{ji}$ is a curve and
$$
\overline{\Gamma^{ji}}=\Gamma^{ji} \cup \left\{X_i,Y_j\right\}.
$$
\end{proof}
\begin{lemma}[The equations for $\dzo$] \label{lem:dzo-2}
For all $i,j$, and $x,y$ where $\dzo^{ji}$ is $C^1$:
\begin{multline}\label{eq::r133}
  (\dzo^{ji}-x\partial_x\dzo^{ji}-y\partial_y\dzo^{ji})(y,x) \\
  = -H_i((\partial_x \dzo^{ji})(y,x)) = -H_j((-\partial_y
  \dzo^{ji})(y,x)).
\end{multline}
Moreover  for all $x\in J_i$ (with $x\not=\xi_i^+$ if $j=i\in I_N\setminus I_0$)
\begin{equation}\label{eq::r131}
(\dzo^{ji}-x\partial_x\dzo^{ji}-y\partial_y\dzo^{ji})(0,x) = L_0(0)+K_i(\max(x,\xi_i^+))
\end{equation}
and for all $y\in J_j$ (with $y\not=-\xi_j^-$ if $j=i\in I_N\setminus I_0$)
\begin{equation}\label{eq::r132}
(\dzo^{ji}-x\partial_x\dzo^{ji}-y\partial_y\dzo^{ji})(y,0)=L_0(0)+K_j(-\max(y,-\xi_j^-)).
\end{equation}
We also have
\begin{equation}\label{eq::r134}
  \partial_x \dzo^{ji}(y,x)= L_i'(\xi_x),\quad \partial_y \dzo^{ji}(y,x)= -L_j'(\xi_y)
\end{equation}
for all $(y,x)\in \partial (J_j\times J_i)$ except for $i=j\in
I_N\setminus I_0$ for which we exclude points $(y,x)\in
\left\{Y_j,X_i\right\}$.

Moreover for $j=i\in I_0$, we have
\begin{equation}\label{eq::r140}
\xi_x=\xi_y = x-y \quad \mbox{for all}\quad (y,x)\in \partial (J_j\times J_i)
\end{equation}
and $j=i\in I_N\setminus I_0$, we have
\begin{equation}\label{eq::r141}
\left\{
\begin{array}{lll}
       \xi_y=x-y,&\quad \xi_x=x-y &    \\
       \xi_y=\xi_j^-,& \quad \xi_x=\xi_i^+ & \quad \mbox{ for }\quad
       (y,x)\in 
       (\partial(J_j\times J_i)) \cap \overline{\Delta^{ji}}
 \end{array}\right.
\end{equation}
\end{lemma}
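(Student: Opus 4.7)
The plan is to reduce everything to the explicit formulas already established for $\dzun^{ji}$, $\dzd^{ji}$, and $\dzq^{ji}$, using the identification of $\dzo^{ji}$ provided by Lemmas~\ref{lem:dzo-1} and \ref{lem:dzo-1bis}. The main observation is that at every point where $\dzo^{ji}$ is $C^1$, one of three things happens locally: either $\dzo^{ji}\equiv \dzun^{ji}$, or $\dzo^{ji}\equiv \dzq^{ji}=L_i(x-y)$, or $\dzo^{ji}\equiv \dzd^{ji}$ (the last only occurring when $i=j\in I_N\setminus I_0$, on the $\Delta^{ji}$-side of the switching curve $\Gamma^{ji}$). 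So the strategy is a case analysis on which of these three regimes holds.

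In the case $i\ne j$, Lemma~\ref{lem:dzo-1} gives $\dzo^{ji}=\dzun^{ji}$ on all of $J_j\times J_i$. On the interior $J_j^*\times J_i^*$, equation~\eqref{eq::r133} is exactly \eqref{eq::r99} from Lemma~\ref{lemma:junc-2}. On the boundary, \eqref{eq::r131}--\eqref{eq::r132} follow from \eqref{eq::r115} in Lemma~\ref{lem::r91}, while \eqref{eq::r134} with the $\xi_x,\xi_y$ described in \eqref{eq::r109} gives the stated gradient formula; combining these with the Legendre identity $L_l(\xi)-\xi L'_l(\xi)=-H_l(L'_l(\xi))-L_0(0)+L_0(0)$, i.e.\ \eqref{eq::r98}, yields the two Hamiltonian expressions in \eqref{eq::r133}. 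For $i=j\in I_0$ one has $\dzo^{ji}=L_i(x-y)=\dzq^{ji}$ and $\xi_i^\pm=0$ by Lemma~\ref{lem:k}, so all formulas reduce to those in Lemma~\ref{lem::r123} with $\xi_x=\xi_y=x-y$, matching \eqref{eq::r140}; the identities \eqref{eq::r131}--\eqref{eq::r132} then read $L_0(0)+K_i(x)$ and $L_0(0)+K_j(-y)$, which is exactly what Lemma~\ref{lem::r123} gives.

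The substantive case is $i=j\in I_N\setminus I_0$. Here \eqref{eq::r120} splits a boundary neighborhood of $(\partial(J_j\times J_i))\setminus\{X_i,Y_j\}$ into the part inside $\Delta^{ji}$, where $\dzo^{ji}=\dzd^{ji}$, and the part outside $\overline{\Delta^{ji}}$, where $\dzo^{ji}=\dzq^{ji}$. On the $\dzq^{ji}$-side one applies Lemma~\ref{lem::r123} verbatim, which gives \eqref{eq::r133} together with $\xi_x=\xi_y=x-y$ (first line of \eqref{eq::r141}); when $y=0$ one has $x-y=x\ge\xi_i^+$, so $\max(x,\xi_i^+)=x$ and \eqref{eq::r131} follows, and symmetrically for $x=0$. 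On the $\dzd^{ji}$-side, using the explicit affine form $\dzd^{ji}(y,x)=-L'_j(\xi_j^-)y+L'_i(\xi_i^+)x+L_0(0)$, a direct computation gives $\dzd^{ji}-x\partial_x\dzd^{ji}-y\partial_y\dzd^{ji}=L_0(0)$, which coincides with $L_0(0)+K_i(\xi_i^+)=L_0(0)+K_j(\xi_j^-)$ by the defining identity $K_i(\xi_i^+)=0=K_j(\xi_j^-)$; moreover $\partial_x\dzd^{ji}=L'_i(\xi_i^+)$ and $\partial_y\dzd^{ji}=-L'_j(\xi_j^-)$, giving the second line of \eqref{eq::r141}, and \eqref{eq::r98} yields the two Hamiltonian forms in \eqref{eq::r133}. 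Finally, the points $x=\xi_i^+$ (with $y=0$) and $y=-\xi_j^-$ (with $x=0$) are precisely the endpoints $X_i,Y_j$ of $\Gamma^{ji}$ where $\dzo^{ji}$ need not be $C^1$; this is why they are excluded in the statements of \eqref{eq::r131}--\eqref{eq::r132} and \eqref{eq::r134}.

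The only real obstacle is bookkeeping: tracking which of $\dzq^{ji}$, $\dzd^{ji}$, or $\dzun^{ji}$ applies on each piece of the boundary, and checking the continuous matching of the two branches of \eqref{eq::r141} at the endpoints $X_i=(0,\xi_i^+)$ and $Y_j=(-\xi_j^-,0)$—where $x-y=\xi_i^+$ and $x-y=-\xi_j^-$ respectively, so the two prescriptions for $(\xi_x,\xi_y)$ coincide, consistent with the fact that $\dzo^{ji}$ is only $C^0$ (not $C^1$) across $\Gamma^{ji}$ through those points. Once this case analysis is laid out, every assertion of the lemma reduces to an algebraic identity that is either immediate from the definitions of $K_l$ and $\xi_l^\pm$ or a direct transcription of a formula already proved in Lemmas~\ref{lemma:junc-2}, \ref{lem::r91}, or \ref{lem::r123}.
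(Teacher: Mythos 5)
Your proposal is correct and follows essentially the same route as the paper: the paper's own proof is a terse version of exactly this argument, citing Lemma~\ref{lemma:junc-2} (via Proposition~\ref{lemma:junc-1}) for $\dzun^{ji}$, Lemma~\ref{lem::r123} for $\dzq^{ji}$, \eqref{eq::r130} for $\dzd^{ji}$, and \eqref{eq::r120} to decide which regime holds near the boundary, then tabulating the three cases $i\neq j$, $i=j\in I_0$, $i=j\in I_N\setminus I_0$ (with the sub-cases $x\gtrless\xi_i^+$, $y\gtrless-\xi_j^-$) just as you do. Your write-up merely makes explicit the algebra (the Legendre identity \eqref{eq::r98} and $K_i(\xi_i^+)=0=K_j(\xi_j^-)$) that the paper leaves to the reader.
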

\begin{proof}
  Using Proposition~\ref{lemma:junc-1} for $\dzun^{ji}$,
  Lemma~\ref{lem::r123} for $\dzq^{ji}$, and (\ref{eq::r130}) for
  $\dzd^{ji}$ and the property (\ref{eq::r120}), we get
$$
(\dzo^{ji}-x\partial_x\dzo^{ji}-y\partial_y\dzo^{ji})(0,x)
= 
\left\{\begin{array}{ll}
L_0(0)+K_i(\max(x,\xi_i^+)) & \quad \mbox{if}\quad i\not=j\\
L_0(0)+K_i(x)  & \quad \mbox{if}\quad i=j\in I_0\\
\left\{\begin{array}{ll}
L_0(0)+K_i(x)   &\quad \mbox{if}\quad x>\xi_i^+\\
L_0(0)  &\quad \mbox{if}\quad x<\xi_i^+
\end{array}\right| & \quad \mbox{if}\quad i=j\in I_N\setminus I_0
\end{array}\right.
$$
which implies (\ref{eq::r131}).  Similarly we get
$$
(\dzo^{ji}-x\partial_x\dzo^{ji}-y\partial_y\dzo^{ji})(y,0)
= 
\left\{\begin{array}{ll}
L_0(0)+K_j(-\max(y,-\xi_j^-))  & \quad \mbox{if}\quad i\not=j\\
L_0(0)+K_j(-y)  & \quad \mbox{if}\quad i=j\in I_0\\
\left\{\begin{array}{ll}
L_0(0)+K_j(-y) &\quad \mbox{if}\quad y>-\xi_j^-\\
L_0(0) &\quad \mbox{if}\quad y<-\xi_j^-
\end{array}\right|& \quad \mbox{if}\quad i=j\in I_N\setminus I_0
\end{array}\right.
$$
which implies (\ref{eq::r132}).  Relations (\ref{eq::r133}) and
(\ref{eq::r134}) follow both from Proposition~\ref{lemma:junc-1} and Lemma
\ref{lem::r123}.  Finally (\ref{eq::r140}) and (\ref{eq::r141})
follows from the previous results.  This ends the proof of the lemma.
\end{proof}
We now can check the equations satisfied by $\dzo$ at the boundary.
\begin{lemma}[Boundary properties of $\dzo$]\label{lem:dzo-bdry-1}
  At any point $(y,x)\in \left\{0\right\}\times J_i$ with $x\not=
  \xi_i^+$ if $i\in I_N\setminus I_0$, we have for any $j\in I_N$
\begin{equation}\label{eq::r136}
  (\dzo^{ji}-x\partial_x\dzo^{ji}-y\partial_y\dzo^{ji})(y,x) 
  = \left\{\begin{array}{rl}
      -\max_{k\in I_N} H_k^-(-\partial_y \dzo^{ki}(y,x)) & \quad \mbox{if}\quad N\ge 2,\\
      - H_1(-\partial_y \dzo^{ki}(y,x)) & \quad \mbox{if}\quad N=1.
\end{array}\right.
\end{equation}
\end{lemma}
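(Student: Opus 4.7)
The plan is to compute both sides of the claimed identity at $(y,x)=(0,x)$ and check they agree. The left-hand side has already been evaluated in Lemma~\ref{lem:dzo-2} (equation~(\ref{eq::r131})): it equals $L_0(0)+K_i(\max(x,\xi_i^+))$ and is independent of $j$. So the task reduces to showing that
$$\max_{k\in I_N}H_k^-\bigl(-\partial_y\dzo^{ki}(0,x)\bigr)=-L_0(0)-K_i(\max(x,\xi_i^+)).$$
I will evaluate $V_k:=H_k^-(-\partial_y\dzo^{ki}(0,x))$ branch by branch and then take the maximum.

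For $k\neq i$, Lemma~\ref{lem:dzo-1} gives $\dzo^{ki}=\dzun^{ki}$, and Lemma~\ref{lem::r91} (formulas~(\ref{eq::r106})-(\ref{eq::r109})) yields $-\partial_y\dzo^{ki}(0,x)=L_k'(\xi_y^k)$, where $\xi_y^k\le 0$ satisfies $K_k(\xi_y^k)=K_i(\max(x,\xi_i^+))$ in every sub-case of~(\ref{eq::r109}); indeed, when $(0,x)\in\Delta^{ki}$ one uses $\max(x,\xi_i^+)=\xi_i^+$ together with $K_i(\xi_i^+)=0=K_k(\xi_k^-)$. Because $L_k'(\xi_y^k)\le L_k'(0)=p_0^k$, the constraint $q\le 0$ in the definition of $H_k^-$ is not binding, so $V_k=H_k(L_k'(\xi_y^k))$; invoking Legendre--Fenchel together with (\ref{eq::r98}) yields
$$V_k=-K_k(\xi_y^k)-L_0(0)=-K_i(\max(x,\xi_i^+))-L_0(0).$$

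The case $k=i$ is more delicate because $\dzo^{ii}$ may locally coincide with the straight-line action $L_i(x-y)$, whose $y$-derivative at $0$ is $-L_i'(x)$, unrelated to the junction structure. Using Lemmas~\ref{lem:dzo-1}-\ref{lem:dzo-1bis}, if either $i\in I_0$, or $i\in I_N\setminus I_0$ with $x>\xi_i^+$, then $\dzo^{ii}(y,x)=L_i(x-y)$ near $(0,x)$, and $-\partial_y\dzo^{ii}(0,x)=L_i'(x)>L_i'(0)=p_0^i$ by strict convexity of $L_i$; the constraint $q\le 0$ in $H_i^-$ is then \emph{active}, the supremum is attained at $q=0$, and $V_i=-L_i(0)$. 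If instead $i\in I_N\setminus I_0$ with $x<\xi_i^+$, then $\dzo^{ii}=\dzd^{ii}$ in a neighbourhood of $(0,x)$, so $-\partial_y\dzo^{ii}(0,x)=L_i'(\xi_i^-)\le p_0^i$, the constraint is inactive, and $V_i=H_i(L_i'(\xi_i^-))=-K_i(\xi_i^-)-L_0(0)=-L_0(0)$.

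It only remains to take the maximum over $k$. In the last sub-case one has $K_i(\max(x,\xi_i^+))=K_i(\xi_i^+)=0$ and every $V_k$ equals $-L_0(0)$, so the identity is immediate. In the other sub-cases, the inequality $V_k\ge V_i$ for any $k\ne i$ is equivalent, after elementary rearrangement, to the tangent-line estimate $L_i(x)\le L_i(0)+x L_i'(x)$, which holds by convexity of $L_i$; hence $\max_k V_k=-K_i(\max(x,\xi_i^+))-L_0(0)$ and negation yields the identity. The case $N=1$ is handled separately: there is only $k=i=1\in I_0$, $\dzo^{11}=L_1(x-y)$, and a direct computation gives $-H_1(L_1'(x))=L_1(x)-xL_1'(x)=K_1(x)+L_0(0)$, which matches the LHS. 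The main conceptual obstacle is the realisation that when $\dzo^{ii}$ follows the straight-line action, the constraint $q\le 0$ in $H_i^-$ becomes active and truncates the value to $-L_i(0)$; this truncation is precisely what reconciles the ``wrong'' derivative $L_i'(x)$ with the junction-aware values arising from the other branches.
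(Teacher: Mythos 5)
Your proof is correct and follows essentially the same route as the paper's: both arguments reduce the identity to the observation that for every $k\neq i$ the relevant velocity $\xi_y$ is non-positive, so that $H_k^-=H_k$ at $-\partial_y\dzo^{ki}(0,x)$ and these branches realize the maximum with the common value $-L_0(0)-K_i(\max(x,\xi_i^+))$. The only difference is one of economy: you compute $V_i=H_i^-(-\partial_y\dzo^{ii}(0,x))$ explicitly in each sub-case and check $V_i\le V_k$ via a tangent-line inequality, whereas the paper dispatches the term $k=i$ in one line from $H_i^-\le H_i$ combined with the fact (Lemma~\ref{lem:dzo-2}) that $-H_k(-\partial_y\dzo^{ki}(0,x))$ equals the left-hand side for \emph{every} $k$ at which $\dzo^{ki}$ is $C^1$.
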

\begin{lemma}[Boundary properties of $\dzo$ (continued)]\label{lem:dzo-bdry-2}
At any point $(y,x)\in J_j\times \left\{0\right\}$ with
$y\not= -\xi_j^-$ if $j\in I_N\setminus I_0$, we have for any $i\in
I_N$
\begin{equation}\label{eq::r137}
  (\dzo^{ji}-x\partial_x\dzo^{ji}-y\partial_y\dzo^{ji})(y,x) 
=-\max_{k\in I_N} H_k^-(\partial_x \dzo^{jk}(y,x)).
\end{equation}
\end{lemma}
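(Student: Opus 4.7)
The plan is to first observe that by~\eqref{eq::r132} in Lemma~\ref{lem:dzo-2}, the left-hand side of~\eqref{eq::r137} is independent of $i$ and equals $L_0(0)+K_j(\bar\xi)$ with $\bar\xi:=-\max(y,-\xi_j^-)\in[\xi_j^-,0]$. Using the Legendre--Fenchel identity~\eqref{eq::r98} together with the fact that $\bar\xi\le 0$ (so the unconstrained supremum defining $H_j$ at $p=L'_j(\bar\xi)$ is attained at a non-positive point, giving $H_j=H_j^-$ there), the left-hand side rewrites as $-H_j^-(L'_j(\bar\xi))$. The identity will therefore follow once we show that $\max_{k\in I_N}H_k^-(\partial_x\dzo^{jk}(y,0))=H_j^-(L'_j(\bar\xi))$.

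The key calculation is the evaluation of $\partial_x\dzo^{jk}(y,0)$ for each $k\in I_N$. By Lemmas~\ref{lem:dzo-1}--\ref{lem:dzo-1bis} and the formulas~\eqref{eq::r109} and~\eqref{eq::r141}, this depends on whether $(y,0)\in\Delta^{jk}$, which in turn depends on the sign of $y+\xi_j^-$. I will split the argument into Case~A, in which $y>-\xi_j^-$ (hence $\bar\xi=-y$), and Case~B, in which $y<-\xi_j^-$ (only possible for $j\in I_N\setminus I_0$, giving $\bar\xi=\xi_j^-$).

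In Case~A, the point $(y,0)$ lies outside $\overline{\Delta^{jk}}$ for every $k$. By~\eqref{eq::r120}, $\dzo^{jj}$ coincides with $\dzq^{jj}$ near $(y,0)$, so $\partial_x\dzo^{jj}(y,0)=L'_j(-y)$; and for $k\ne j$, $\dzo^{jk}=\dzun^{jk}$ and~\eqref{eq::r109} gives $\partial_x\dzo^{jk}(y,0)=L'_k(\xi_x^k)$ with $\xi_x^k=(K_k^+)^{-1}(K_j(-y))>\xi_k^+\ge 0$---the strict inequality coming from $K_j(-y)<0$, itself a consequence of $-y<\xi_j^-$ and the strict monotonicity of $K_j$ on $(-\infty,0]$ provided by Lemma~\ref{lem:k}. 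Evaluating the Hamiltonians, the term $k=j$ yields $H_j^-(L'_j(-y))=H_j(L'_j(-y))=-L_0(0)-K_j(-y)>-L_0(0)$, while each $k\ne j$ yields $H_k^-(L'_k(\xi_x^k))=-L_k(0)\le -L_0(0)$ (the constraint $q\le 0$ being active at $q=0$ because $\xi_x^k>0$). Thus the maximum is realized at $k=j$ and produces the required value.

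In Case~B, the left-hand side equals $L_0(0)$ since $K_j(\xi_j^-)=0$. For every $k\in I_N\setminus I_0$ (including $k=j$), one has $\xi_x^k=\xi_k^+>0$---this follows from~\eqref{eq::r141} when $k=j$ and from the fourth line of~\eqref{eq::r109} when $k\ne j$, using $(y,0)\in\Delta^{jk}$---so $H_k^-(L'_k(\xi_k^+))=-L_k(0)<-L_0(0)$. For $k\in I_0$, the set $\Delta^{jk}$ is empty and~\eqref{eq::r109} yields $\xi_x^k=(K_k^+)^{-1}(K_j(\xi_j^-))=(K_k^+)^{-1}(0)=\xi_k^+=0$, whence $\partial_x\dzo^{jk}(y,0)=L'_k(0)$ and $H_k^-(L'_k(0))=-L_k(0)=-L_0(0)$, which realizes the maximum. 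The main obstacle is the bookkeeping: for each pair $(j,k)$ and in each case one must correctly identify which of $\dzq^{jk}, \dzd^{jk}, \dzun^{jk}$ coincides with $\dzo^{jk}$ near $(y,0)$, and track whether the associated $\xi_x^k$ is strictly positive or not, as this dichotomy decides whether $H_k^-$ equals $H_k$ or reduces to its boundary value $-L_k(0)$.
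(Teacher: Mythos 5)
Your proof is correct and follows essentially the same route as the paper's: the same reduction via \eqref{eq::r132} and \eqref{eq::r98}, the same dichotomy on the sign of $y+\xi_j^-$, and the same witness index ($k=j$ when $y>-\xi_j^-$, any $k\in I_0$ when $y<-\xi_j^-$). The paper's version is shorter only because it observes that $H_k(\partial_x\dzo^{jk}(y,0))$ is independent of $k$ and dominates $H_k^-$, so that exhibiting a single index with $\xi_x\le 0$ suffices; this spares the explicit evaluation of the non-maximal terms and also covers the corner $y=0$ with $j\in I_0$, which falls between your two cases (there your Case~A computation still goes through, with the strict inequality ``$>-L_0(0)$'' weakened to an equality).
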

\begin{proof}[Proof of Lemma~\ref{lem:dzo-bdry-1}.]
We first remark the general fact that 
$$
H_k(L_k'(\xi))=H_k^-(L_k'(\xi)) \quad \mbox{if}\quad \xi\le 0.
$$
On the one hand, from Lemma~\ref{lem:dzo-1}, we have for points
$(y,x)\in \left\{0\right\}\times J_i$ where $\dzo^{ki}$ is $C^1$
$$
-(\dzo^{ki}-x\partial_x\dzo^{ki}-y\partial_y\dzo^{ki})(y,x)
=H_k(-\partial_y \dzo^{ki}(y,x)) \ge H_k^-(-\partial_y \dzo^{ki}(y,x))
$$ 
and this common quantity is independent on $k$.  Therefore to conclude
to (\ref{eq::r136}) in the case $N\ge 2$, it is enough to show that
there exists at least an index $k$ such that
\begin{equation}\label{eq::r137bis}
  H_k(-\partial_y \dzo^{ki}(y,x)) =  H_k^-(-\partial_y \dzo^{ki}(y,x)).
\end{equation}

\paragraph{Case 1:} $N\ge 2$ and $k\not= i$.
Then we have $\xi_y\le 0$ and then
$$
H_k(-\partial_y \dzo^{ki}(y,x)) =
H_k(L_k'(\xi_y))=H_k^-(L_k'(\xi_y))=H_k^-(-\partial_y \dzo^{ki}(y,x)).
$$
Therefore \eqref{eq::r137bis} holds true for $k\not=i$.

\paragraph{Case 2:} $N=1$ and $k= i=1\in I_0$.
Then we have
$$\dzo(y,x)=\dzo^{11}(y,x)=L_1(x-y)$$
and by Lemma~\ref{lem::r123}, we have for
$$
(\dzo^{11}-x\partial_x\dzo^{11}-y\partial_y\dzo^{11})(y,x)=
-H_1(-\partial_y \dzo^{11}(y,x))
$$ 
which is in particular true for $y=0$. This shows (\ref{eq::r136}) in
the case $N=1$.  \medskip
\end{proof}
\begin{proof}[Proof of Lemma~\ref{lem:dzo-bdry-2}.]
From Lemma~\ref{lem:dzo-1}, 
we have for points $(y,x)\in J_j\times \left\{0\right\}$ where $\dzo^{jk}$ is $C^1$
$$
-(\dzo^{jk}-x\partial_x\dzo^{jk}-y\partial_y\dzo^{jk})(y,x) 
= H_k(\partial_x \dzo^{jk}(y,x)) \ge H_k^-(\partial_x
\dzo^{jk}(y,x))
$$ 
and this common quantity is independent on $k$. Therefore to conclude
to (\ref{eq::r137}), it is enough to show that there exists at least
an index $k$ such that
\begin{equation}\label{eq::r137ter}
  H_k(\partial_x \dzo^{jk}(y,x)) =  H_k^-(\partial_x \dzo^{jk}(y,x)).
\end{equation}

\noindent \textbf{Case 1:} $j\in I_0$.
Then from Lemma~\ref{lem:dzo-1}, we have with $k=j$
\begin{equation}\label{eq::r142}
\partial_x \dzo^{jk}(y,x)=L_k'(\xi_x) \quad \mbox{with}\quad \xi_x=x-y\le 0.
\end{equation}
Therefore (\ref{eq::r137}) holds true for $k=j$.
\medskip

\noindent \textbf{Case 2:} $j\in I_N\setminus I_0$.
We distinguish sub-cases. \medskip

\noindent \textsc{Subcase 2.1:} $y> -\xi_j^-$. From Lemma~\ref{lem:dzo-1}, we
still have (\ref{eq::r142}) with $k=j$, which again implies
(\ref{eq::r137}) for $k=j$.\medskip

\noindent \textsc{Subcase 2.2:} $y< -\xi_j^-$. Then we choose an index $k\in
I_0$, and Lemma~\ref{lem:dzo-1} implies that
$$
\partial_x \dzo^{jk}(y,x)=L_k'(\xi_x) \quad \mbox{with}\quad
\xi_x=\xi_k^+ =0
$$
which again implies (\ref{eq::r137}) for such $k\in I_0$.  This ends
the proof of the lemma.
\end{proof}
We can now prove Theorem~\ref{thm:minimal-action-0}.
\begin{proof}[Proof of Theorem~\ref{thm:minimal-action-0}.]
  From Lemma~\ref{lem:dzo-1}, we know that $\dzo$ has the regularity
  $C^1_*$ except on certain curves $\Gamma^{ji}\cup
  \left\{Y_j,X_i\right\}$ for $j=i\in I_N\setminus I_0$. So if
  $(y,x)$ is a point of local $C^1_*$ regularity of $\dzo$, then we
  simply set
$$
\varphi_0 =  \dzo \quad \mbox{locally around}\quad (y,x).
$$
If $(y,x)$ is a point where $\dzo$ is not $C^1_*$, then we have
$\dzo(y,x)=\dzun(y,x)$, and we can simply set
$$
\varphi_0 = \dzun \quad \mbox{on}\quad J^2.
$$
The required equalities follow from Lemmas~\ref{lem:dzo-1},
\ref{lem:dzo-bdry-1}, \ref{lem:dzo-bdry-2}.
Estimate~(\ref{eq:grad-control}) follows from the fact that $\dzo$ is
the minimum of $\dzun \in C^1_*(J^2)$ and of functions in $C^1(J_i^2)$
for some $i$.  This ends the proof of the theorem.
\end{proof}

\appendix

\section{Appendix: Stability and Perron's method}

This section contains classical results from viscosity solutions, whose
statements are adapted to the equation studied in the present paper. 

\subsection{Stability results}

In view of Proposition~\ref{prop:equiv-def}, the following stability
results are classical in the viscosity solution framework. See for instance
\cite{barles94}.
\begin{proposition}[Stability]\label{prop:stab}
Assume (A1') and let $T>0$.
\begin{itemize}
\item
Consider a family of sub solutions (resp. super-solutions) $(u_\alpha)_{\alpha \in A}$ 
of \eqref{eq:main} on $J_T$ such that the u.s.c. (resp. l.s.c.) envelope $u$ of 
$$
\sup_{\alpha \in A} u_\alpha \quad \quad (\mbox{resp.} \ \inf_{\alpha \in A} u_\alpha)
$$ 
is finite everywhere. Then $u$ is a sub-solution (resp. super-solution)
of \eqref{eq:main} on $J_T$.
\item Consider a family of sub solutions (resp. super-solutions)
  $(u_\eps)_{\eps \in (0,1)}$ of \eqref{eq:main} on $J_T$ such that
  the upper (resp. lower) relaxed semi-limit $u$ is finite
  everywhere. Then $u$ is a sub-solution (resp. super-solution) of
  \eqref{eq:main} on $J_T$.
\end{itemize}
\end{proposition}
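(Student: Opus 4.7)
The plan is to deduce these stability results from the equivalent characterization of viscosity solutions given in Proposition~\ref{prop:equiv-def}, which allows one to verify the junction condition in its ``relaxed'' form. The inherent difficulty is that if $(t_n,x_n)$ is a sequence of local maximizers of $u_{\alpha_n}-\phi$ converging to $(t_0,0)$, there is no reason the $x_n$'s should remain either at the junction point or on a single branch; the relaxed formulation is precisely what absorbs both possibilities in the limit.

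I detail the sub-solution case of the first item; the super-solution case and the second item follow identically. Let $u$ denote the upper semi-continuous envelope of $\sup_{\alpha} u_\alpha$; up to replacing each $u_\alpha$ by its u.s.c.\ envelope, we may assume each $u_\alpha$ is upper semi-continuous. Take $\phi \in C^1_*(J_T)$ with $u \le \phi$ on $J_T$ and $u(P_0) = \phi(P_0)$ at some $P_0 = (t_0,x_0) \in J_T$. Perturbing $\phi$ by $\eps(|t-t_0|^2+d(x,x_0)^2)$ makes the maximum of $u-\phi$ strict on some closed ball $\bar B(P_0,r) \subset J_T$. By definition of the u.s.c.\ envelope, there exist $(\alpha_n, Q_n)$ with $Q_n \to P_0$ and $u_{\alpha_n}(Q_n) \to u(P_0)$; pick maximizers $P_n = (t_n,x_n)$ of $u_{\alpha_n}-\phi$ on $\bar B(P_0,r)$. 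A classical argument using the strictness of the maximum yields $P_n \to P_0$ and $u_{\alpha_n}(P_n) \to u(P_0)=\phi(P_0)$, so that for $n$ large $P_n$ lies in the open ball and the sub-solution inequality for $u_{\alpha_n}$ applies at $P_n$.

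I then split according to $x_0$. If $x_0 \in J_i^*$, then $x_n \in J_i^*$ for large $n$, and the inequality $\phi_t(P_n) + H_i(\phi_x(P_n))\le 0$ passes to the limit. If $x_0 = 0$, I extract a subsequence along which either all $x_n = 0$, so that
\[
\phi_t(P_n) + \max_{j\in I_N} H_j^-(\phi_x^j(P_n))\le 0
\]
gives \eqref{eq:cond-jonction} at $P_0$ in the limit, or all $x_n \in J_i^*$ for one fixed $i \in I_N$, yielding $\phi_t(P_0) + H_i(\phi_x^i(P_0))\le 0$. In every case one of the two alternatives in the relaxed sub-solution criterion of Proposition~\ref{prop:equiv-def} is satisfied by $\phi$ at $P_0$, so $u$ is a sub-solution of \eqref{eq:main} on $J_T$.

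The super-solution half of item~1 follows by the mirror argument with lower semi-continuous envelopes, infima, and the relaxed super-solution criterion; item~2 is handled by the same scheme with sequences $(\eps_n, P_n) \to (0, P_0)$ extracted from the definition of the upper (respectively lower) relaxed semi-limit. As stressed above, the only real obstacle is the branching dichotomy at the junction when $x_0 = 0$, which is dissolved by Proposition~\ref{prop:equiv-def}.
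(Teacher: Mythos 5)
Your argument is correct and is exactly the classical stability proof that the paper invokes without writing out (the paper only remarks that, in view of Proposition~\ref{prop:equiv-def}, the result is standard and refers to \cite{barles94}). Your use of the relaxed junction condition to absorb the dichotomy between maximizers $x_n$ sitting at the junction point and maximizers staying on a single branch is precisely the intended role of Proposition~\ref{prop:equiv-def}, so the proposal matches the paper's approach.
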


\subsection{Perron's method}

In this subsection, we state the existence of a solution of
\eqref{eq:main}-\eqref{eq:ci} which can be constructed by using
Perron's method. This method is the classical way to get existence in
a viscosity solution framework.  
\begin{theorem}[Existence]\label{thm:perron}
  Assume (A0)-(A1') and let $T>0$.  Then there exists an upper
  semi-continuous function $u\colon [0,T)\times J \to \R$ which is a
  viscosity solution of \eqref{eq:main}-\eqref{eq:ci} on $J_T$ and
  satisfies
$$|u(t,x)-u_0(x)|\le Ct \quad \mbox{for}\quad (t,x)\in [0,T)\times J$$
\end{theorem}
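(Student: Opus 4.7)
The plan is to apply Perron's method in the form adapted to the junction setting via the relaxed junction conditions from Proposition~\ref{prop:equiv-def}. First I would construct explicit global barriers. Since $u_0$ is Lipschitz continuous by (A0) with some constant $L_{u_0}$ and each $H_i$ and $H_i^-$ is continuous by (A1'), the quantity
$$
C := 1 + \sup_{i \in I_N,\; |p| \le L_{u_0}} \bigl(|H_i(p)| + |H_i^-(p)|\bigr)
$$
is finite. I then claim that $u^\pm(t,x) := u_0(x) \pm Ct$ are respectively a super- and a sub-solution of \eqref{eq:main}-\eqref{eq:ci} on $J_T$. Indeed, if $\phi \in C^1_*(J_T)$ touches $u^-$ from above at $(t_0,x_0)$, then $u_0 - \phi(t_0,\cdot)$ attains a local maximum at $x_0$ on each branch, so every one-sided derivative $\phi^i_x(t_0,x_0)$ is a supergradient of $u_0$ and hence has absolute value at most $L_{u_0}$, while $\phi_t(t_0,x_0) = -C$; the choice of $C$ then yields both the interior inequality and the junction inequality \eqref{eq:cond-jonction}. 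The argument for $u^+$ is symmetric.

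Next I would define the Perron candidate
$$
u(t,x) := \sup\bigl\{w(t,x) : w \text{ is a sub-solution of } \eqref{eq:main}\text{ on }J_T,\ u^- \le w \le u^+\bigr\}.
$$
This family is nonempty since it contains $u^-$, so $u$ is well defined with $u^- \le u \le u^+$, which immediately gives $u(0,\cdot) = u_0$ and the quantitative bound $|u(t,x) - u_0(x)| \le Ct$. By Proposition~\ref{prop:stab}, the upper semi-continuous envelope $u^*$ is itself a sub-solution of \eqref{eq:main}, and the sandwich $u^- \le u^* \le u^+$ forces $u^*(0,\cdot) = u_0$, so $u^*$ belongs to the admissible family and $u = u^*$ is upper semi-continuous.

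It remains to show that $u_*$ is a super-solution of \eqref{eq:main} on $J_T$, which I would establish by the classical Perron bump argument. Suppose for contradiction that $u_*$ fails the super-solution inequality at some $(t_0,x_0) \in J_T$; by localization and standard reductions we may assume $u_*(t_0,x_0) < u^+(t_0,x_0)$ (otherwise $u^+$ itself already carries the required inequality at that point), and that there is $\phi \in C^1_*(J_T)$ touching $u_*$ strictly from below at $(t_0,x_0)$ with the Hamilton-Jacobi inequality violated by some fixed margin $\theta > 0$. Then for $\delta, r > 0$ chosen small enough, the localized lift
$$
w(t,x) := \max\!\bigl(u(t,x),\; \phi(t,x) + \delta - \tfrac{\delta}{r^2}\bigl((t-t_0)^2 + d(x,x_0)^2\bigr)\bigr)
$$
remains a sub-solution of \eqref{eq:main} (as the maximum of two sub-solutions, by Proposition~\ref{prop:stab}), stays below $u^+$, and strictly exceeds $u$ at points arbitrarily close to $(t_0,x_0)$, contradicting the maximality defining $u$. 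The hard part will be carrying out this bump construction when $x_0 = 0$: one has to lift $\phi$ simultaneously on all $N$ branches while preserving the junction inequality. This is precisely where Proposition~\ref{prop:equiv-def} is decisive, since in the relaxed formulation it suffices to ensure that the perturbed function satisfies the Hamilton-Jacobi inequality along one single branch, and the continuity of each $H_i$ from (A1') guarantees that the margin $\theta$ absorbs the perturbation. The function $u = u^*$ then fulfills all the conditions of Definition~\ref{defi:main} together with the stated bound.
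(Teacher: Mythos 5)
The paper gives no proof of Theorem~\ref{thm:perron}: it is stated in the appendix as a classical consequence of Perron's method, made available by Proposition~\ref{prop:equiv-def} and Proposition~\ref{prop:stab}. Your architecture (barriers $u_0\pm Ct$, supremum of admissible sub-solutions, stability for the upper envelope, bump argument for the lower envelope) is exactly the intended one, but two steps, both located at the junction point, are not correct as written. First, in the verification that $u^{\pm}$ are barriers, the claim that every one-sided derivative $\phi^i_x(t_0,0)$ of a test function touching $u^-$ from above is a supergradient of $u_0$ and hence bounded by $L_{u_0}$ in absolute value is false at $x_0=0$: touching from above on the half-line $J_i$ only yields the one-sided bound $\phi^i_x(t_0,0)\ge -L_{u_0}$ (take $u_0\equiv 0$ and $\phi(t,x)=|x|^2+100|x|-Ct$, which touches $u^-$ from above at $(t_0,0)$ with $\phi^i_x(t_0,0)=100$). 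The conclusion survives because $H_i^-=\inf_{q\le 0}H_i(\cdot+q)$ is non-increasing, so the one-sided bound is precisely the one needed in \eqref{eq:cond-jonction}; you should invoke this monotonicity rather than a two-sided gradient bound.

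Second, and more seriously, the bump construction at $x_0=0$ has a gap that Proposition~\ref{prop:equiv-def}, as you invoke it, does not close. If $u_*$ fails the super-solution property at $(t_0,0)$, you get $\phi_t+\max_i H_i^-(\phi^i_x)<-\theta$ at $(t_0,0)$; but for $w=\max(u,\psi)$ to be a sub-solution you also need the branch inequalities $\psi_t+H_i(\psi^i_x)\le 0$ at points of $J_i^*$ arbitrarily close to $0$ where the maximum is attained by the bump, and these do not follow from the junction inequality: $H_i(\phi^i_x)$ can be much larger than $H_i^-(\phi^i_x)$ when $\phi^i_x>p^i_0$. The relaxed formulation only concerns test points located at the junction itself; at points on the open branches no relaxation is available. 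The fix is the device used in Case~1 of the proof of Proposition~\ref{prop:equiv-def}: first replace $\phi$ by $\tilde\phi\le\phi$ with $\tilde\phi^i_x(t_0,0)=\min(\phi^i_x(t_0,0),p^i_0)$, for which $H_i(\tilde\phi^i_x(t_0,0))=H_i^-(\phi^i_x(t_0,0))$, so that $\tilde\phi$ is a strict classical sub-solution simultaneously on every branch and at the junction; only then perform the localized lift, using the continuity of the $H_i$ from (A1') to absorb the perturbation. With these two repairs your proof is complete and is the one the paper intends.
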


\paragraph{Acknowledgements.} The first author is partially supported
by the KAMFAIBLE project (ANR-07-BLAN-0361). The authors thank
A. Briani for useful discussions and important indications on the
recent literature on the subject. The authors are very grateful to
J.-P. Lebacque for fruitful and enlightening discussions on the
modeling of traffic flows and indications on the related
literature. The second author thanks M.  Khoshyaran for discussions on
the topic.  The authors also thank Y. Achdou, G. Barles,
P. Cardaliaguet, E. Chasseigne, H. Frankowska and M.~Garavello for
discussions about this work.

\small

\def\polhk#1{\setbox0=\hbox{#1}{\ooalign{\hidewidth
  \lower1.5ex\hbox{`}\hidewidth\crcr\unhbox0}}}

\end{document}